\newcommand\supp{\operatorname{supp}}
\newcommand\card{\operatorname{card}}
\newcommand \cls{|\xi_{st}|:cl}
\newcommand \clth{\pi :cl}
\newcommand \medth{\pi:med}
\newcommand \farth{\pi:far}
\newcommand \bilap{\triangle^{2}}
\newcommand \coef{(t_{0} - t')^{\frac{n}{4}} (t'' -t_{0})^{\frac{n}{4}}}
\newcommand \coefinv{(t_{0} - t')^{-\frac{n}{4}} (t'' -t_{0})^{-\frac{n}{4}}}
\newcommand \p{\partial}
\newcommand \Rec{\mathcal{R}}
\newcommand{\EQQARR}[1]{\begin{equation} \begin{array}{ll} #1 \end{array} \nonumber \end{equation}}
\newcommand{\EQQARRLAB}[1]{\begin{equation} \begin{array}{ll} #1 \end{array}  \end{equation}}
\newcommand{\comp}[1]{{#1}^\complement}
\theoremstyle{plain}
\newtheorem{thm}{Theorem}
\newtheorem{rem}[thm]{Remark}
\newtheorem{prop}[thm]{Proposition}
\newtheorem{lem}[thm]{Lemma}
\newtheorem{defin}{Definition}
\numberwithin{equation}{section} \numberwithin{thm}{section}
\begin{document}

\title[Fourth-Order Schr\"odinger Equations]{A Weak Form Of The Soliton Resolution Conjecture For High-Dimensional
Fourth-Order Schr\"odinger Equations}

\author{Tristan Roy}
\address{Nagoya University}
\email{tristanroy@math.nagoya-u.ac.jp}

\vspace{-0.3in}
\begin{abstract}
We prove a weak form of the soliton resolution conjecture for -$H^{2}$ uniformly bounded in time- solutions of semilinear fourth-order Schr\"odinger equations, in dimensions $n \geq 5$, and with a mass supercritical-energy subcritical power type nonlinearity, by using a strategy devoloped in \cite{taocompact}. More precisely, we prove that the solutions are decomposed into a sum of two terms: a free solution and a nonradiative term that approaches asymptotically
an object that has similar properties to those of a finite sum of solitons. The asymptotic behavior of the nonradiative term is derived from its asymptotic frequency localization and its asymptotic spatial localization. There are two main differences between this paper and \cite{taocompact}. The first one one appears when we prove the asymptotic frequency localization: we fill a gap of regularity by using the better dispersive properties of the high frequency pieces of the free solution. The second one appears when we prove the asymptotic spatial localization. A key estimate depends on the fundamental solution that does not have an explicit form. We overcome the difficulty by introducing a modified fundamental solution and exploiting the symmetries of the characters of the phases.
\end{abstract}

\maketitle
\tableofcontents

\section{Introduction}

In this paper we consider the fourth-order Schr\"odinger equations on $\mathbb{R}^{n}$

\begin{equation}
\begin{array}{ll}
i \partial_{t} u  + \Delta^{2} u  & =  F(u) \\
\end{array}
\label{Eqn:BiSchrod}
\end{equation}
with $F(u)$ a pure power-type nonlinearity, that is $F(u):= \pm |u|^{p-1} u $ and for exponents that are mass-supercritical and
energy-subcritical, that is $1+ \frac{8}{n} < p < 1+ \frac{8}{n-4}$ \footnote{In the sequel we always assume that $p$ lies in this range
unless stated otherwise.}. Fourth-order Schr\"odinger equations have been introduced by
in \cite{karp} and in \cite{karpshag} to take into account the role of small fourth-order dispersion terms
in the propagation of intense laser beams in a bulk medium with Kerr nonlinearity.

These equations have attracted much attention from the community. Sharp dispersive estimates for the biharmonic Schr\"odinger operator have been
obtained in \cite{artzi}. Specific fourth order Schr\"odinger equations have been discussed in
\cite{fibich,guo,hao,segata}. Local well-posedness
for energy subcritical powers (that is $1 < p < 1 + \frac{8}{n-4}$ ) \footnote {for a definition of the concept of energy-subcritical
powers, see for example \cite{taobook} in the context of second-order Schr\"odinger equation and see e.g \cite{pausaderdef1} for
its adaptation to (\ref{Eqn:BiSchrod})} is discussed in \cite{pausaderdef1}. The following theorem is known:

\begin{thm}
Let $(t_{0},u_{0}) \in \mathbb{R} \times H^{2}$. Then

\begin{itemize}

\item \underline{Local existence}: Let $B$ be a bounded subset of $H^{2}$. Then there exist a subset $\tilde{B} \subset H^{2}$, a time
interval $I$ containing $t_{0}$ (called an interval of local existence) such that for all $u_{0} \in B$ there exists a solution
$u: I \rightarrow \tilde{B}$ satisfying  $u(t_{0})=u_{0}$. Furthermore the size of the interval (called the time of local existence) depends on $\| u_{0} \|_{H^{2}}$ and the map
$u_{0} \rightarrow u$ is Lipschitz continuous from $B$ to $\tilde{B}$.

\item  \underline{Uniqueness}: If two solutions $u: I \rightarrow H^{2}$ and $\tilde{u}: I \rightarrow H^{2}$ agree on at least one time in $I$, then
they agree for all time in $I$

\item  \underline{Conservation of the mass}: we have for all time $t \in I$

\begin{equation}
\begin{array}{ll}
\| u(t) \|_{L^{2}} & = \| u_{0} \|_{L^{2}}
\end{array}
\nonumber
\end{equation}

\end{itemize}

By solution $u : I \rightarrow \tilde{B}$ we mean a function $u \in C_{t}^{0} H^{2} (I \times \mathbb{R}^{n})$
that satisfies the Duhamel formula, that is

\begin{equation}
\begin{array}{ll}
u(t) & := e^{i(t-t_{0}) \bilap} u(t_{0}) - i \int_{t_{0}}^{t} e^{i (t-t') \bilap} F(u(t')) \, dt^{'}
\end{array}
\label{Eqn:DuhamelBiSchrod}
\end{equation}
\label{Thm:ToProve}
\end{thm}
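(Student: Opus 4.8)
The plan is to establish this as a standard local well-posedness statement via a contraction mapping argument in a suitable Strichartz space. First I would record the dispersive estimate for the biharmonic free evolution $e^{it\bilap}$, namely $\| e^{it\bilap} f \|_{L^{\infty}} \lesssim |t|^{-n/4} \| f \|_{L^{1}}$, and interpolate it with the $L^{2}$ conservation to obtain the $L^{q}_{t} L^{r}_{x}$ Strichartz estimates for the sharp biharmonic admissible pairs (those satisfying $\frac{2}{q} + \frac{n}{r} = \frac{n}{2}$ with $\frac{4}{q} \le n(\tfrac12 - \tfrac1r)$, together with the dual and retarded versions). Since we work at the $H^{2}$-regularity level, I would apply these estimates after placing two derivatives (via $\langle \nabla \rangle^{2}$ or the fractional Laplacian) and then close the argument using the Sobolev embedding $\dot H^{2} \hookrightarrow L^{\frac{2n}{n-4}}$, which is exactly what the energy-subcritical condition $p < 1 + \frac{8}{n-4}$ is tailored to.

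The core of the construction is to define the map
\[
\Phi(u)(t) := e^{i(t-t_{0})\bilap} u_{0} - i \int_{t_{0}}^{t} e^{i(t-t')\bilap} F(u(t')) \, dt'
\]
on a ball in the space $X_{I} := C_{t}^{0} H^{2}(I \times \mathbb{R}^{n}) \cap L^{q}_{t} W^{2,r}_{x}(I \times \mathbb{R}^{n})$ for an appropriate admissible exponent pair $(q,r)$, equipped with the norm built from the $S^{2}$ Strichartz norm. The linear term is controlled by the homogeneous Strichartz estimate, giving $\| e^{i(t-t_{0})\bilap} u_{0} \|_{X_{I}} \lesssim \| u_{0} \|_{H^{2}}$. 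For the Duhamel term, I would use the retarded Strichartz estimate followed by the fractional chain/product rule (Leibniz rule for $\langle \nabla \rangle^{2}$ applied to $F(u) = \pm|u|^{p-1}u$), bounding $\| F(u) \|_{L^{q'}_{t} W^{2,r'}_{x}} \lesssim \| u \|_{L^{\alpha}_{t} L^{\beta}_{x}}^{p-1} \| u \|_{L^{q}_{t} W^{2,r}_{x}}$ where the Hölder exponents are chosen so that $(\alpha,\beta)$ and the resulting dual pair $(q',r')$ are admissible; the factor $|I|^{\theta}$ with $\theta > 0$ coming from Hölder in time on the compact interval is what makes $\Phi$ a contraction on a small ball once $|I|$ is chosen small depending only on $\| u_{0} \|_{H^{2}}$. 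The same estimates applied to differences $\Phi(u) - \Phi(v)$ give the contraction and the Lipschitz dependence of $u_{0} \mapsto u$, which yields local existence, continuous dependence, and (by a standard connectedness/continuity argument comparing two solutions on the set where they agree) uniqueness in $C_{t}^{0} H^{2}$.

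Finally, conservation of mass follows formally by multiplying \eqref{Eqn:BiSchrod} by $\bar u$, integrating over $\mathbb{R}^{n}$, and taking imaginary parts: the biharmonic term contributes $\operatorname{Im} \int \bar u \, \Delta^{2} u = 0$ after integration by parts (it is self-adjoint and real), and the nonlinear term contributes $\operatorname{Im} \int \bar u \, F(u) = \pm \operatorname{Im} \int |u|^{p+1} = 0$, so $\frac{d}{dt} \| u(t) \|_{L^{2}}^{2} = 0$; this is then justified rigorously at $H^{2}$ regularity by a standard density/approximation argument (or by noting the conserved quantity passes to the limit under the Lipschitz well-posedness). The main obstacle I anticipate is bookkeeping the exponents: one must verify that a single admissible pair $(q,r)$ can simultaneously carry two derivatives at the $H^{2}$ level, absorb the Sobolev embedding loss from the nonlinearity across the full subcritical range $1 + \tfrac{8}{n} < p < 1 + \tfrac{8}{n-4}$, and still leave a positive power of $|I|$ — this is routine but exponent-heavy, and in higher dimensions $n \ge 5$ one must be careful that the biharmonic admissibility condition (which is genuinely different from the Schrödinger one because of the $|t|^{-n/4}$ decay) is actually satisfied by the chosen pairs. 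Since this theorem is quoted from \cite{pausaderdef1}, in the paper itself one would simply cite it rather than reproduce the argument.
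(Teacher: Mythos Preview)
Your proposal is correct and, as you yourself note in the last sentence, the paper does not prove this theorem at all: it is stated as a known result and attributed to \cite{pausaderdef1}, so there is no ``paper's own proof'' to compare against. The contraction-mapping sketch you give is indeed the standard route used in \cite{pausaderdef1}; one small slip is that the biharmonic admissibility relation is $\frac{4}{q} + \frac{n}{r} = \frac{n}{2}$ (equivalently $\frac{1}{q} + \frac{n}{4r} = \frac{n}{8}$ as in the paper's Proposition on Strichartz estimates), not $\frac{2}{q} + \frac{n}{r} = \frac{n}{2}$, though you correctly flag later that the biharmonic scaling differs from the Schr\"odinger one.
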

This allows to define the maximal interval of existence $I_{max}$, i.e the maximal interval on which $u$ is defined. \\ \\

In the defocusing case (that is $F(u):= -|u|^{p-1} u$), we expect that the solutions exist for all time (i.e $I_{max} =\mathbb{R}$) and that
the solution scatter, i.e they behave asymptotically like a solution to the linear fourth-order
Schr\"odinger equation. The long-time behavior of solutions in this case  has been studied by many
authors: see e.g \cite{miaodef,pausaderdef1,pausaderdef2}. In the focusing case (that is $F(u)= + |u|^{p-1} u$), we do not necessarily expect scattering. For example, it is well known (see e.g \cite{lenz,fibich} ) that there are non trivial solutions of \footnote{Here $s_c$ denotes the critical exponent, i.e
$s_c:= \frac{n}{2} - \frac{4}{p-1}$}

\EQQARR{
\bilap Q  +  Q - |Q|^{p-1} Q =0,
}
which provide solutions

\EQQARRLAB{
u(t,x) := e^{-i t} Q(x)
\label{Eqn:StatSol}
}
which do not scatter. One can also construct solutions that blow-up in finite time: see e.g
\cite{lenz,chao}. We refer to \cite{qing} for scattering results under suitable assumptions. See  \cite{miaoxuzhao,pausaderfocrad} for scattering results
regarding the energy-critical powers (i.e $ p = 1 + \frac{8}{n-4} $). We refer to \cite{pausshao} for scattering results regarding the mass-critical powers
(that is $p= 1 + \frac{8}{n}$).

In this paper, we are interested in the asymptotic behavior of -$H^{2}$ uniformly in time - bounded solutions for mass-supercritical and energy-subcritical
exponents on the maximal time interval of existence. In this case, from a well-known consequence
of Theorem \ref{Thm:ToProve}, the solution exists for all time $T$. In the defocusing case, as we have seen, we expect that the solution
behaves like a free fourth-order Schr\"odinger solution. But in the focusing case, it is believed that the solution divides into two parts
as times goes to infinity. The first one is a radiative part, that is a linear fourth-order Schr\"odinger solution.
The second part approaches a finite sum of stationary solitons (such as (\ref{Eqn:StatSol})) or travelling solitons. By solitons we mean global and
non scattering solutions. This is the \textit{soliton resolution conjecture}: in other words, the only obstacle to scattering is the formation of these solitons. See \cite{soffer} for further discussions regarding this conjecture.

To this end the notion of $G$-precompactness with $J$ components was defined in \cite{taocompact}

\begin{defin}{\textbf{"$G$-precompactness with $J$ components"}, \cite{taocompact}}
Let $J \in \mathbb{N}$. We say that a set $E \subset H^{2}$ is a $G$-precompact set
with $J$ components if there exist a compact set $K \subset H^{2}$ such that for all $f \in E$ one can find
$(x_{1},... x_{J}) \in (\mathbb{R}^{n})^{J}$ and $(h_{1},...,h_{J}) \in K^{J}$ such that

\begin{equation}
\begin{array}{ll}
f(x) & = \sum_{j=1}^{J} h_{j}(x-x_{j})
\end{array}
\nonumber
\end{equation}
\end{defin}

\begin{rem}
Two comments:

\begin{itemize}

\item The orbit of (\ref{Eqn:StatSol}) is $G$-precompact with one component.

\item The notation $G$ corresponds to the action of the translation group $G$ on the compact set $K$ and the generated set is denoted by $GK$. An
equivalent definition is the following one: $E \subset H^{2}$ is a $G$-precompact set with $J$ components if we have $E \subset J(GK)$ with

\begin{equation}
\begin{array}{ll}
J(GK):= & \{ f_{1} + ....+f_{j}; f_{1},..., f_{J} \in GK \}
\end{array}
\nonumber
\end{equation}

\end{itemize}
\end{rem}
With this in mind, we can now state the main result of this paper:

\begin{thm}{\textbf{"Weak form of the soliton resolution conjecture"} }
Let $n \geq 5$ and $u$ solution of (\ref{Eqn:BiSchrod}) with $ 1 + \frac{8}{n} < p < 1 + \frac{8}{n-4}$ and with data $u_{0} \in H^{2}$. Let $I_{max}$ be its maximal time interval of existence. Assume that
$u$ is - $\dot{H}^{2}$ uniformly bounded in time -, that is

\begin{equation}
\begin{array}{ll}
\| u \|_{L_{t}^{\infty} \dot{H}^{2} (I_{max})} & \leq  M
\end{array}
\label{Eqn:Boundu}
\end{equation}
for some $M:=M(\| u_{0} \|_{H^{2}}) < \infty$. Then the solution exists globally in time, that is $I_{max}= \mathbb{R}$. Moreover
there exist $(u_{+},v) \in H^{2} \times H^{2}$  and a $G$-precompact set $K$ with $J$ components such that

\begin{equation}
\begin{array}{ll}
u(t) & = e^{i t \bilap} u_{+} + v(t)
\end{array}
\nonumber
\end{equation}
with

\begin{equation}
\begin{array}{ll}
\lim_{t \rightarrow \infty} dist_{H^{2}}(v(t),K) & = 0.
\end{array}
\nonumber
\end{equation}
We say that $v$ is the \textit{nonradiative part} of the solution and $e^{i t \bilap} u_{+}$ is the \textit{radiative part} (or \textit{dispersive part})
of it.
\label{Thm:WeakSoliton}
\end{thm}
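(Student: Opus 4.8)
The plan is to adapt the strategy of \cite{taocompact} to the biharmonic propagator $e^{it\bilap}$. Global existence is immediate: by Theorem \ref{Thm:ToProve} the time of local existence depends only on $\| u(t) \|_{H^{2}}$, which stays uniformly bounded because the mass is conserved and $\| u \|_{L^{\infty}_{t}\dot{H}^{2}} \leq M$ by (\ref{Eqn:Boundu}); hence the solution cannot leave $H^{2}$ in finite time and $I_{max} = \mathbb{R}$. To produce the decomposition I would define the asymptotic state $u_{+}$ through the bounded family $e^{-it\bilap} u(t) \in H^{2}$: extracting weak limits along $t_{n} \to \infty$ and applying a linear profile decomposition adapted to the symmetries of the free biharmonic flow (space translations, time translations, and the $\dot{H}^{2}$ scaling), one sees that the profiles whose time or space parameters escape to infinity reassemble, after re-evolving, into a single free solution $e^{it\bilap} u_{+}$, while only finitely many ``bounded'' profiles survive. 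Setting $v(t) := u(t) - e^{it\bilap} u_{+}$, one checks $v$ remains bounded in $H^{2}$ and that $e^{-it\bilap} v(t) \rightharpoonup 0$ weakly; it then suffices to show that $\{ v(t) : t \geq 0 \}$ lies, asymptotically as $t \to \infty$, within a $G$-precompact set with $J$ components. By a Riesz--Kolmogorov-type criterion this reduces to two localization statements for $v$, uniform in the $\limsup_{t\to\infty}$ sense.

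\emph{Asymptotic frequency localization.} I would prove
\[
\lim_{N\to\infty}\ \limsup_{t\to\infty}\ \Big( \| P_{\geq N} v(t) \|_{H^{2}} + \| P_{\leq 1/N} v(t) \|_{H^{2}} \Big) = 0,
\]
with $P_{\geq N}$, $P_{\leq 1/N}$ Littlewood--Paley projections. The low-frequency piece is handled by Bernstein estimates together with conservation of mass and the mass-supercriticality $p > 1 + \tfrac{8}{n}$. The high-frequency piece is the first technical core: a direct Duhamel/Strichartz bootstrap (even carried out via a double Duhamel argument) loses regularity here, and this gap is closed by exploiting the enhanced dispersive decay of the high-frequency components of $e^{it\bilap}$ — fast oscillations are dispersed more strongly by the biharmonic flow than by the Schr\"odinger one — which forces the high frequencies of $u$ into the radiative part $e^{it\bilap} u_{+}$.

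\emph{Asymptotic spatial localization.} I would then prove the existence of curves $x_{1}(t), \dots, x_{J}(t) \in \mathbb{R}^{n}$ such that for every $\varepsilon > 0$ there is $R > 0$ with
\[
\limsup_{t\to\infty}\ \Big\| v(t) \Big\|_{H^{2}\left( \mathbb{R}^{n} \setminus \bigcup_{j=1}^{J} B(x_{j}(t), R) \right)} \ < \ \varepsilon .
\]
This is the second technical core. The decisive estimate requires good pointwise and weighted control of the fundamental solution of $e^{it\bilap}$, which, unlike the explicit Gaussian kernel of the Schr\"odinger group, has no closed form, so the usual stationary-phase arguments do not transfer directly. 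I would replace it by a \emph{modified} fundamental solution, obtained by a suitable regularization/truncation of the oscillatory integral with phase $x\cdot\xi + t|\xi|^{4}$, derive the required bounds for this surrogate by stationary phase, and transfer them back by exploiting the symmetries of the phase characters. The number $J$ of centers is finite because each one carries at least a fixed positive amount of mass (equivalently, of $\dot{H}^{2}$-norm), while the total is controlled by $M$ through (\ref{Eqn:Boundu}) and conservation of mass.

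\emph{Conclusion.} A bounded subset of $H^{2}$ that is uniformly frequency-localized and uniformly spatially-localized around $J$ centers is, up to an arbitrarily small error, contained in $J(GK_{0})$ for some compact $K_{0} \subset H^{2}$, i.e.\ in a $G$-precompact set with $J$ components. Combining the two localization statements above with a diagonal argument as $\varepsilon \to 0$ produces such a set $K$ with $\lim_{t\to\infty} dist_{H^{2}}(v(t), K) = 0$, which is the assertion. I expect the main obstacle to be the spatial-localization step, precisely because the biharmonic fundamental solution is not explicit and must be replaced by a workable surrogate; the high-frequency regularity gap in the frequency-localization step is the secondary difficulty.
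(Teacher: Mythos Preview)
Your outline follows the same overall architecture as the paper (Tao's strategy from \cite{taocompact}: decompose into radiative plus nonradiative part, then verify the criterion of Proposition~\ref{prop:EquivCompactSpatialFreq} via asymptotic boundedness, frequency localization, and spatial localization), and you correctly identify the two technical novelties --- the enhanced high-frequency dispersion of $e^{it\bilap}$ and the need for a modified fundamental solution. A few points where your sketch diverges from, or underspecifies, what the paper actually does:

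\textbf{Construction of $u_{+}$.} No profile decomposition is used. The paper simply shows that $t\mapsto e^{-it\bilap}u(t)$ is weakly Cauchy in $H^{2}$ (Proposition~\ref{Prop:WeakBoundConst}), by pairing the Duhamel increment against a Schwartz test function and using the dispersive estimate (\ref{Eqn:Dispers}) together with $\|F(u(t))\|_{L^{\tilde p}}\lesssim 1$. This yields $u_{+}$ directly and, more importantly, the two Duhamel representations (\ref{Eqn:vt0}) and (\ref{Eqn:vtinf}) of $v(t)$ --- one running from $0$ to $t$, the other (as a weak limit) from $t$ to $\infty$.

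\textbf{The double Duhamel is the engine, not a side remark.} Both localization proofs work by computing $\|Q_{N}v(t)\|_{H^{2}}^{2}$ (resp.\ $\|(1-\chi)v(t_{0})\|_{L^{2}}^{2}$) as an $H^{2}$ (resp.\ $L^{2}$) inner product and inserting the \emph{forward} representation in one slot and the \emph{backward} representation in the other. This produces a double time integral $\int_{t}^{T}\!\int_{0}^{t}\langle e^{i(t-t')\bilap}F(u(t')),\,e^{i(t-t'')\bilap}F(u(t''))\rangle\,dt''\,dt'$ in which $|t'-t''|$ is bounded below, so dispersion can be exploited. Your low-frequency sentence (``Bernstein plus mass conservation plus mass-supercriticality'') does not by itself give decay in $N$: for $N\le 1$ one has $\|P_{\le N}v(t)\|_{H^{2}}\sim\|P_{\le N}v(t)\|_{L^{2}}$ and Bernstein on $v(t)$ alone gains nothing. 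The paper obtains the $N^{\eta}$ gain precisely from this double-Duhamel pairing, using the frequency-localized dispersive bound (\ref{Eqn:DispImpr}) on each Littlewood--Paley piece $P_{K}$, $K\le N$. For high frequencies the same pairing is used, and the regularity gap you mention ($F\in C^{1}$ versus $H^{2}$) is closed exactly as you say, by (\ref{Eqn:DispImpr}) on a large portion $\{|t'-t''|\ge K^{-\alpha}\}$ of the interaction region, with the small remainder handled by the local Strichartz estimate (\ref{Eqn:Paus}) and Proposition~\ref{Prop:LocalEst}.

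\textbf{Spatial localization: structure you are missing.} The paper proceeds in two stages. First a \emph{partial} localization (Proposition~\ref{prop:spat}): one locates finitely many concentration spots $x_{j}(t_{0},\mu_{1})$ of $v_{N}(t_{0})$, proves via a perturbation argument (Proposition~\ref{prop:Perturb}) that a local-in-space Strichartz norm of $u$ is small away from these spots, and then runs the double Duhamel again on $(1-\chi)v(t_{0})$. The resulting bilinear term is \emph{not} a free inner product because of the spatial cutoff $1-\tilde\chi^{2}$; it is governed by the kernel $K$ of (\ref{Eqn:DefK}), which is an $x$-integral of a product of two (shifted, rescaled) fundamental solutions against $1-\tilde\chi^{2}$. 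This is where your ``modified fundamental solution'' enters: one factors out the stationary-phase oscillation $e^{i\xi_{st}\cdot x}$ to define $\tilde I$ (see (\ref{Eqn:Phaseout})--(\ref{Eqn:DeftildeI})), proves the weighted derivative bounds (\ref{Eqn:EsttildeI}), and then --- the step absent from your sketch --- passes to \emph{bipolar coordinates} $(\rho,\sigma)=(|x-y|,|x-z|)$ to exploit the radial symmetry of the two phase characters simultaneously and establish the kernel decay (\ref{Eqn:EstK}). Second, a \emph{final} localization (Proposition~\ref{prop:finalspat}) upgrades this so that the number $J$ of centers no longer depends on the accuracy parameter $\mu_{1}$; this uses a mass-increment lemma (Lemma~\ref{Lem:IncrMass}) rather than the simple ``each center carries a fixed mass quantum'' argument you describe.

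In short: your strategy is the paper's strategy, but the double-Duhamel pairing of (\ref{Eqn:vt0}) with (\ref{Eqn:vtinf}) is the mechanism you should foreground for \emph{both} localization steps, and the spatial step requires the bipolar-coordinate kernel estimate (\ref{Eqn:EstK}) and the two-stage partial/final structure, neither of which appears in your outline.
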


\begin{rem}
Two comments:

\begin{itemize}

\item
Theorem \ref{Thm:WeakSoliton} is consistent with the soliton resolution conjecture. Indeed, we expect the nonradiative part of the solution to approach
a finite sum of solitons. We expect the orbit of a stationary or traveling soliton to be $G$-precompact with one component. We expect
the orbit of the superposition of $J$ solitons to be $G$-precompact with $J$ components.

\item
This result is a weak form of the soliton conjecture. It is weak since it remains to better characterize the $G$-precompact set with $J$ components:
ideally one would like to prove that, in fact, this $G$-precompact set with $J$ components is a finite sum of stationary of travelling solitons.

\end{itemize}
\end{rem}

\begin{rem}
Notice that by combining (\ref{Eqn:Boundu}) with conservation of mass we have in fact
$ \| u \|_{L_{t}^{\infty} H^{2}(I_{max})} < \infty$. Therefore the solution exists globally in time.
\label{rem:inhomhom}
\end{rem}

The following proposition shows that in order to prove that an orbit $f: \mathbb{R} \rightarrow H^{2} $ approaches
a $G$-precompact set with $J$ components it is enough to prove that it is asymptotically bounded, localized in frequency and in space:

\begin{prop}{\textbf{"$G$-precompact set and asymptotic spatial and frequency localization "} \cite{taocompact}}

Let $f: \mathbb{R} \rightarrow H^{2}$.Then the following are equivalent

\begin{itemize}

\item There exists a $G$-precompact set $K \subset H$ with $J$ components such that  \\
$\lim_{t \rightarrow \infty} dist_{H^{2}}(f(t),K)=0$

\item $f$ is asymptotically bounded, that is

\begin{equation}
\begin{array}{ll}
\overline{\lim}_{t \rightarrow \infty} \| f(t) \|_{H^{2}} & < \infty,
\end{array}
\nonumber
\end{equation}
$f$ is asymptotically localized in frequency i.e for any $\epsilon >0$ one can find $\mu >0$ such that

\begin{equation}
\begin{array}{ll}
\overline{\lim}_{t \rightarrow \infty} \| P_{\geq \mu^{-1}} f(t) \|_{H^{2}} & \leq \epsilon \\
\overline{\lim}_{t \rightarrow \infty} \| P_{\leq \mu} f(t) \|_{H^{2}} & \leq \epsilon,
\end{array}
\nonumber
\end{equation}
and $f$ is asymptotically localized in space, i.e there exist $x_{1},..., x_{J}: \mathbb{R} \rightarrow \mathbb{R}^{n}$ for which we have

\begin{equation}
\begin{array}{ll}
\overline{\lim}_{t \rightarrow \infty} \int_{\inf_{1 \leq j \leq J} |x-x_{j}(t)| \geq \mu^{-1}} |f(t,x)|^{2} \, dx & \leq \epsilon^{2} \cdot
\end{array}
\nonumber
\end{equation}

\end{itemize}
\label{prop:EquivCompactSpatialFreq}
\end{prop}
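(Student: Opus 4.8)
The plan is to prove the two implications separately, treating the forward direction (compactness $\Rightarrow$ the three asymptotic properties) as the easy one and the reverse direction (the three properties $\Rightarrow$ compactness) as the substantive one. For the forward direction, suppose $\lim_{t\to\infty} \mathrm{dist}_{H^2}(f(t),K)=0$ with $K \subset J(GK_0)$ for some compact $K_0 \subset H^2$. Since $K_0$ is compact it is bounded, gives uniform control of high and low frequency tails (a finite $\epsilon$-net in $H^2$ reduces each tail bound to a statement about finitely many fixed functions, and $\|P_{\geq\mu^{-1}}h\|_{H^2}, \|P_{\leq\mu}h\|_{H^2} \to 0$ as $\mu \to 0$ for each fixed $h$), and gives uniform spatial decay of the $L^2$ mass outside large balls. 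Writing any element of $K$ as $\sum_{j=1}^J h_j(\cdot - x_j)$ with $h_j \in K_0$, the three asymptotic properties for elements of $K$ follow by the triangle inequality, using the $x_j$ as the spatial centers; then one transfers them to $f(t)$ itself using $\mathrm{dist}_{H^2}(f(t),K)\to 0$ and that the frequency projections $P_{\geq\mu^{-1}}, P_{\leq\mu}$ are bounded on $H^2$ while the spatial-mass functional is $1$-Lipschitz in $L^2$.

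For the reverse direction, the strategy is to extract, from the asymptotic frequency and spatial localization, a precompact family of ``profiles'' and realize it inside $J(GK_0)$. First, fix a sequence $\epsilon_k \downarrow 0$; asymptotic boundedness gives a uniform bound $\|f(t)\|_{H^2}\le R$ for $t$ large, and for each $k$ we obtain $\mu_k>0$ with the stated control of $\|P_{\geq \mu_k^{-1}}f(t)\|_{H^2}$, $\|P_{\leq \mu_k}f(t)\|_{H^2}$ (up to $\epsilon_k$, for $t$ large) and spatial centers $x_1^{(k)},\dots,x_J^{(k)}$ so that the $L^2$-mass of $f(t)$ outside $\bigcup_j B(x_j^{(k)}(t),\mu_k^{-1})$ is $\le \epsilon_k^2$. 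The first idea is to partition $f(t)$ using a smooth partition of unity subordinate to the balls $B(x_j^{(k)}(t), \mu_k^{-1})$, writing $f(t) = \sum_{j=1}^J g_j^{(k)}(t) + (\text{error of size } O(\epsilon_k))$, and then translate each piece to the origin: $h_j^{(k)}(t,\cdot) := g_j^{(k)}(t, \cdot + x_j^{(k)}(t))$. The point is that this family, as $k\to\infty$ and $t\to\infty$, is: bounded in $H^2$, uniformly localized in frequency (each $h_j^{(k)}$ is morally supported in frequencies between $\mu_k$ and $\mu_k^{-1}$, up to small errors; here one must commute the spatial cutoffs past the Littlewood--Paley projections, which is where the ``up to $O(\epsilon_k)$'' slack is used), and uniformly localized in space (supported in a ball of radius $\mu_k^{-1}$ about the origin). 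A family in $H^2$ that is simultaneously uniformly localized in space and in frequency is precompact in $H^2$ by a Rellich--Kondrachov / Riesz--Fréchet--Kolmogorov type argument. Taking $K_0$ to be the closure of the union of all these translated pieces (over all $j$, all $k$, all large $t$) together with $\{0\}$, and $K := J(GK_0)$, one gets $\mathrm{dist}_{H^2}(f(t),K) \to 0$: given $\epsilon>0$ pick $k$ with $\epsilon_k<\epsilon$ (times an absolute constant), and for $t$ large the decomposition $f(t)=\sum_j h_j^{(k)}(t,\cdot - x_j^{(k)}(t)) + O(\epsilon)$ exhibits an element of $K$ within $O(\epsilon)$ of $f(t)$.

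The main obstacle is ensuring that the single compact set $K_0$ works uniformly, rather than a $k$-dependent one: the localization radii $\mu_k^{-1}$ blow up as $k\to\infty$, so a priori the translated pieces $h_j^{(k)}(t)$ live in larger and larger ``boxes'' in phase space and there is no reason their union is precompact. The resolution is that precompactness is only needed ``in the limit'': one shows the family $\{h_j^{(k)}(t) : t \ge T_k\}$ is \emph{totally bounded} by noting that for the purpose of an $\epsilon$-net one may first discard the frequency tails (harmless up to $O(\epsilon_k) < \epsilon$), and that the remaining frequency-truncated, spatially-truncated family — while living in a large box — can be covered by finitely many $\epsilon$-balls because the $H^2$ mass that genuinely sits near the ``boundary'' of the box (in space or frequency) is itself $O(\epsilon)$ by the localization hypotheses. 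Equivalently, one argues by contradiction using a diagonal/profile-decomposition extraction: any sequence $h_{j_m}^{(k_m)}(t_m)$ has, after passing to a subsequence, a weak $H^2$ limit $h_\infty$, and the uniform space+frequency localization upgrades weak convergence to strong $H^2$ convergence (no mass escapes to spatial infinity, to high frequency, or to low frequency); then $h_\infty$ and all tails of the sequence can be placed in a fixed compact set. This is precisely the kind of concentration-compactness bookkeeping carried out in \cite{taocompact}, and the argument here is identical since it does not see the fourth-order structure of \eqref{Eqn:BiSchrod} at all — it is a purely functional-analytic statement about curves in $H^2$.
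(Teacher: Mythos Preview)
The paper does not supply its own proof of this proposition; it is quoted from \cite{taocompact} and used without proof, so there is nothing in the paper to compare against. Your sketch is a correct outline of the concentration-compactness argument from that reference. One small correction: in the hypothesis the centers $x_1(t),\dots,x_J(t)$ are fixed independently of $\epsilon$ (only $\mu$ varies), so writing $x_j^{(k)}$ is unnecessary; the genuine subtlety behind your ``main obstacle'' is not the growing cutoff scale per se but that centers may cluster at some times and separate at others, so a translated profile near $x_j$ can carry mass from a nearby $x_{j'}$ sitting at an uncontrolled distance from the origin. Your weak-to-strong diagonal argument handles this once one first passes to a subsequence along which each pairwise distance $|x_j(t)-x_{j'}(t)|$ either stays bounded or diverges, which is finitely many cases since $J$ is fixed --- this is indeed the bookkeeping in \cite{taocompact}.
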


\section{Basic estimates}

In this section we recall some basic estimates that we constantly use throughout the proof of
Theorem \ref{Thm:WeakSoliton} . \\
We first state the dispersive estimates of the free solution:

\begin{prop}{\textbf{"Dispersive Estimates"} \cite{artzi}}

For all $\alpha \in \mathbb{N}^{n}$ we have
\begin{equation}
\begin{array}{ll}
\| D^{\alpha} e^{i t \bilap} f \|_{L^{\infty}} & \lesssim \frac{1}{|t|^{ \frac{n+ |\alpha|}{4}}} \| f \|_{L^{1}}
\end{array}
\label{Eqn:Dispers}
\end{equation}
\end{prop}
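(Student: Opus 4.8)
The plan is to reduce this to a pointwise bound for the convolution kernel of $D^{\alpha} e^{it\bilap}$ and then to a uniform estimate for a single rescaled oscillatory integral. Since $e^{it\bilap}$ acts as the Fourier multiplier with symbol $e^{it|\xi|^{4}}$, we may write $D^{\alpha} e^{it\bilap} f = (D^{\alpha} K_{t}) * f$ with $D^{\alpha} K_{t}(x) = c_{n} \int_{\mathbb{R}^{n}} \xi^{\alpha} e^{it|\xi|^{4}} e^{ix \cdot \xi} \, d\xi$, so that, by Young's convolution inequality, it suffices to prove $\| D^{\alpha} K_{t} \|_{L^{\infty}} \lesssim |t|^{-\frac{n+|\alpha|}{4}}$. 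Substituting $\xi = |t|^{-1/4} \eta$ gives $D^{\alpha} K_{t}(x) = c_{n} |t|^{-\frac{n+|\alpha|}{4}} I_{t}(y)$, where $y := |t|^{-1/4} x$ and $I_{t}(y) := \int_{\mathbb{R}^{n}} \eta^{\alpha} e^{i(\operatorname{sgn}(t)|\eta|^{4} + y \cdot \eta)} \, d\eta$ (understood as an oscillatory integral). Hence everything reduces to showing that $I_{t}(y)$ is bounded uniformly in $y \in \mathbb{R}^{n}$ and in the sign of $t$.

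To estimate $I_{t}(y)$ I would use a Littlewood--Paley-type partition $1 = \sum_{j \geq 0} \psi_{j}$, with $\psi_{0}$ supported in $\{ |\eta| \lesssim 1 \}$ and $\psi_{j}$ supported in $\{ |\eta| \sim 2^{j} \}$ for $j \geq 1$, and split $I_{t} = \sum_{j} I_{t}^{(j)}$. The phase $\phi(\eta) := \operatorname{sgn}(t)|\eta|^{4} + y \cdot \eta$ has gradient $\nabla \phi = \operatorname{sgn}(t)\, 4 |\eta|^{2} \eta + y$, which vanishes only on the sphere $\{ |\eta| = (|y|/4)^{1/3} \}$; consequently $|\nabla \phi| \gtrsim \max(2^{3j}, |y|)$ on $\{ |\eta| \sim 2^{j} \}$ for every scale $j$ such that $2^{3j}$ is separated from $|y|$ by a full dyadic step. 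For these "non-stationary'' scales, repeated integration by parts against $\frac{\nabla \phi \cdot \nabla}{i |\nabla \phi|^{2}}$ — using that $\eta^{\alpha} \psi_{j}$ and the successive derivatives of $|\nabla \phi|^{-2}$ are controlled on $\{ |\eta| \sim 2^{j} \}$ — gives, for every $N$, $|I_{t}^{(j)}(y)| \lesssim_{N} 2^{jn} 2^{j|\alpha|} \big( 2^{2j}/|\nabla \phi|^{2} \big)^{N}$, and since $|\nabla \phi| \gtrsim \max(2^{3j},|y|)$ the sum of these contributions is bounded uniformly in $y$.

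It remains to treat the $O(1)$ many "stationary'' scales $j$ with $2^{3j} \sim |y|$ (together with $j = 0$, trivially $O(1)$). For such $j$ I would rescale $\eta = 2^{j} \zeta$, obtaining $I_{t}^{(j)}(y) = 2^{j(n+|\alpha|)} \int_{\mathbb{R}^{n}} \zeta^{\alpha} e^{i \lambda \Phi(\zeta)} \psi_{1}(\zeta) \, d\zeta$, with $\lambda := 2^{4j} \sim |y|^{4/3}$ large and $\Phi(\zeta) := \operatorname{sgn}(t)|\zeta|^{4} + b \cdot \zeta$, $|b| = 2^{-3j}|y| \sim 1$. The Hessian of $|\zeta|^{4}$ equals $4|\zeta|^{2}\,\mathrm{Id} + 8 \zeta \zeta^{\mathsf{T}}$, with eigenvalues $4|\zeta|^{2}$ (multiplicity $n-1$) and $12|\zeta|^{2}$, hence non-degenerate and uniformly bounded below on $\{ |\zeta| \sim 1 \}$; so stationary phase (equivalently, one-dimensional van der Corput estimates after passing to polar coordinates) gives $\big| \int \zeta^{\alpha} e^{i \lambda \Phi} \psi_{1} \big| \lesssim \lambda^{-n/2}$, whence $|I_{t}^{(j)}(y)| \lesssim 2^{j(|\alpha|-n)}$. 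Summing the $O(1)$ stationary scales then bounds $\sup_{y} |I_{t}(y)|$ for the orders $|\alpha| \leq n$ relevant here (recall $n \geq 5$), which finishes the proof. A more symmetric alternative exploits that $|\xi|^{4}$ is radial: one reduces $D^{\alpha} K_{t}$ to a one-dimensional oscillatory integral with a Bessel-function amplitude and concludes from van der Corput together with the standard asymptotics of Bessel functions.

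The main obstacle is the uniform-in-$y$ control precisely on the critical sphere $|\eta| \sim |y|^{1/3}$, where no integration by parts is available: one must there quantitatively use the non-degeneracy of the Hessian of $|\xi|^{4}$ away from the origin, and this is exactly what produces the $\lambda^{-n/2}$ gain and, after undoing the scaling, the sharp powers $|t|^{-\frac{n+|\alpha|}{4}}$. Organizing the transition between the stationary and non-stationary regimes, i.e.\ the few scales with $2^{3j} \approx |y|$, so that the sum over $j$ stays bounded, is the technical heart of the argument.
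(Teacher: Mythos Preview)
The paper does not give its own proof of this proposition; it is quoted from Ben-Artzi--Koch--Saut \cite{artzi}, and the paper later imports the corresponding pointwise kernel bound $|\partial^{\alpha} I(x)| \lesssim \langle x\rangle^{-(n-|\alpha|)/3}$ from that reference as (\ref{Eqn:EstI}) without reproving it.

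Your argument is correct and follows the standard route: reduce via Young to a uniform bound on the rescaled kernel $I_{t}(y)$, decompose dyadically in $|\eta|$, use repeated integration by parts off the critical shell $|\eta|^{3}\sim |y|$, and on that shell apply stationary phase exploiting that the Hessian $4|\zeta|^{2}\mathrm{Id}+8\zeta\zeta^{\mathsf T}$ of $|\zeta|^{4}$ is uniformly nondegenerate on $\{|\zeta|\sim 1\}$. The alternative you mention --- passing to polar coordinates and reducing to a one-dimensional integral with a Bessel-function amplitude --- is in fact the approach of \cite{artzi}; the paper itself uses that polar-coordinate viewpoint extensively in Sections~\ref{Section:EstK} and \ref{Sec:OscInt} when proving the refined bound (\ref{Eqn:EsttildeI}) for the modified fundamental solution.

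Your caveat that the uniform bound on $I_{t}(y)$ only follows for $|\alpha|\leq n$ is exactly right and worth stating: the stationary-phase contribution on the critical shell is of size $2^{j(|\alpha|-n)}\sim |y|^{(|\alpha|-n)/3}$, which is unbounded in $y$ once $|\alpha|>n$, so the proposition as written ``for all $\alpha\in\mathbb{N}^{n}$'' is slightly optimistic. In the paper the estimate is only invoked with $|\alpha|\leq 4<n$ (e.g.\ the terms $D^{2m}e^{it'\bilap}\phi$ with $m\leq 2$ in Sections~\ref{Sec:Const} and \ref{Sec:FreqLoc}), so this does not affect any of the applications.
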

We then state the Strichartz estimates:

\begin{prop}{\textbf{"Strichartz estimates"} \cite{pausaderdef1}}

Let $u$ be a solution of $i \partial_{t} u + \triangle^{2} u = G $   on an interval $I=[a,b]$. Let $(q,r)$ $B$- admissible i.e
$\frac{1}{q} + \frac{n}{4r} = \frac{n}{8}$, $2 \leq q,r \leq \infty $, $(q,r) \neq (2,\infty)$. Let $(\tilde{q}, \tilde{r})$ be a bipoint
lying in the dual set of  $B$-admissible points, i.e there exists $(x,y)$ $B$-admissible such that $\frac{1}{\tilde{q}} + \frac{1}{x}=1$ and
$\frac{1}{\tilde{r}} + \frac{1}{y}=1$. Then we have

\begin{itemize}

\item \textbf{"Strichartz estimates with no derivative"}

\begin{equation}
\begin{array}{ll}
\| u \|_{L_{t}^{q} L_{x}^{r} (I)} & \lesssim  \| u (a) \|_{L^{2}} +  \| G \|_{L_{t}^{\tilde{q}} L_{x}^{\tilde{r}} (I) }
\end{array}
\label{Eqn:Strich}
\end{equation}

\item \textbf{"Strichartz estimates with gain of derivative"}

\begin{equation}
\begin{array}{ll}
\| \triangle u \|_{ L_{t}^{q} L_{x}^{r}(I)} & \lesssim \| \triangle u (a) \|_{L^{2}} + \| \nabla G \|_{L_{t}^{2} L_{x}^{\frac{2n}{n+2}}(I)}
\end{array}
\label{Eqn:StrichGain}
\end{equation}
\end{itemize}
\end{prop}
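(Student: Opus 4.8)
The plan is to derive both inequalities from the pointwise dispersive bound (\ref{Eqn:Dispers}) together with the $L^{2}$-unitarity of $e^{it\bilap}$, through the $TT^{*}$/Keel--Tao formalism; the only genuinely new work lies in a gain-of-derivative homogeneous estimate that feeds the second inequality. As a first step, interpolating (\ref{Eqn:Dispers}) with $|\alpha|=0$ against the mass identity $\|e^{it\bilap}f\|_{L^{2}}=\|f\|_{L^{2}}$ gives the $L^{r'}\!\to L^{r}$ decay $\|e^{it\bilap}f\|_{L^{r}_{x}}\lesssim|t|^{-\frac{n}{4}(1-\frac{2}{r})}\|f\|_{L^{r'}_{x}}$ for $2\le r\le\infty$. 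Feeding this into the abstract Keel--Tao theorem, with decay rate $\frac{n}{4}$ (for which the admissibility condition is exactly $\frac{1}{q}+\frac{n}{4r}=\frac{n}{8}$), produces the homogeneous estimate $\|e^{it\bilap}f\|_{L^{q}_{t}L^{r}_{x}(\mathbb{R})}\lesssim\|f\|_{L^{2}}$ for every $B$-admissible $(q,r)$; the hypothesis $n\ge 5$ is precisely what makes the genuine endpoint $(2,\tfrac{2n}{n-4})$ admissible, while $(2,\infty)$ is the forbidden Keel--Tao endpoint (the degenerate case $n=4$). Commuting $e^{it\bilap}$ with Fourier multipliers gives the same bound with any fixed number of derivatives on both sides, and the dual estimate $\|\int_{\mathbb{R}}e^{-is\bilap}G(s)\,ds\|_{L^{2}}\lesssim\|G\|_{L^{\tilde q}_{t}L^{\tilde r}_{x}}$ for dual $B$-admissible $(\tilde q,\tilde r)$.

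To prove (\ref{Eqn:Strich}) I would expand $u$ through the Duhamel formula (\ref{Eqn:DuhamelBiSchrod}) on $I=[a,b]$. The free part $e^{i(t-a)\bilap}u(a)$ is controlled by the homogeneous estimate. For the Duhamel part one first removes the time ordering by the Christ--Kiselev lemma --- legitimate since $(q,r)\neq(2,\infty)$, so the relevant time-integrability exponents are strictly separated --- and then factors $\int_{I}e^{i(t-s)\bilap}G(s)\,ds=e^{it\bilap}\bigl(\int_{I}e^{-is\bilap}G(s)\,ds\bigr)$; applying the homogeneous estimate to the outer propagator and its dual to the inner integral bounds this by $\|G\|_{L^{\tilde q}_{t}L^{\tilde r}_{x}(I)}$. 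At the double endpoint, where Christ--Kiselev is unavailable, the inhomogeneous estimate comes directly from the bilinear interpolation built into the Keel--Tao proof.

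The estimate (\ref{Eqn:StrichGain}) is where the work is. Apply $\triangle$ to (\ref{Eqn:DuhamelBiSchrod}); since $\triangle$ commutes with $e^{i(t-s)\bilap}$, the free part is handled by the homogeneous estimate applied to $\triangle u(a)$, and it remains to bound $\|\int_{I}e^{i(t-s)\bilap}\triangle G(s)\,ds\|_{L^{q}_{t}L^{r}_{x}}$ by $\|\nabla G\|_{L^{2}_{t}L^{2n/(n+2)}_{x}}$. Writing $\triangle=\sum_{j,k}R_{j}R_{k}\,\partial_{j}\partial_{k}$ with Riesz transforms $R_{j}$ (bounded on every $L^{r}_{x}$), peeling one derivative onto $G$ and setting $H:=|\nabla|G\in L^{2}_{t}L^{2n/(n+2)}_{x}$, this reduces to $\|\,|\nabla|\int_{I}e^{i(t-s)\bilap}H(s)\,ds\,\|_{L^{q}_{t}L^{r}_{x}}\lesssim\|H\|_{L^{2}_{t}L^{2n/(n+2)}_{x}}$. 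Testing against $\psi\in L^{q'}_{t}L^{r'}_{x}$, moving the full $|\nabla|$ onto $\psi$, pairing in $\dot{H}^{1}\times\dot{H}^{-1}$ at the common time variable, and restoring the ordering by Christ--Kiselev (or by the bilinear argument at the endpoint), this follows from the dual homogeneous estimate for $(q,r)$ together with the gain-of-derivative \emph{endpoint} estimate
\[
\bigl\|\,|\nabla|\,e^{it\bilap}f\,\bigr\|_{L^{2}_{t}L^{2n/(n-2)}_{x}(\mathbb{R})}\lesssim\|f\|_{L^{2}_{x}},\qquad n\ge 5 .
\]
This is the one place where the fourth-order structure is decisive: unlike $e^{it\triangle}$, the propagator $e^{it\bilap}$ genuinely gains derivatives in global space-time norms because the phase $|\xi|^{4}$ has stronger curvature. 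Its $TT^{*}$ form is $\|\int_{\mathbb{R}}|\nabla|^{2}e^{i(t-s)\bilap}g(s)\,ds\|_{L^{2}_{t}L^{2n/(n-2)}_{x}}\lesssim\|g\|_{L^{2}_{t}L^{2n/(n+2)}_{x}}$, whose kernel $|\nabla|^{2}e^{i\tau\bilap}$ should map $L^{2n/(n+2)}_{x}\to L^{2n/(n-2)}_{x}$ with operator norm $\lesssim|\tau|^{-1}$; this bound cannot be produced by interpolating the $L^{1}\to L^{\infty}$ decay of (\ref{Eqn:Dispers}) against $L^{2}$ (the multiplier $|\xi|^{2}$ being unbounded on $L^{2}$), so one must extract it directly from stationary-phase estimates for the oscillatory kernel $\int_{\mathbb{R}^{n}}|\xi|^{2}e^{i(x\cdot\xi+\tau|\xi|^{4})}\,d\xi$ in the various regimes of $|x|^{4}/|\tau|$, in the spirit of \cite{artzi}. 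I expect the main obstacle to be that the resulting decay rate $|\tau|^{-1}$ is exactly the borderline for a convolution in $t$ to act boundedly on $L^{2}_{t}$: a plain dyadic decomposition $|t-s|\sim 2^{k}$ bounds each piece by $\|g\|\|h\|$ but leaves a divergent sum over $k$, so one is forced into the Keel--Tao bilinear interpolation --- trading a slightly better spatial exponent on one side against a slightly worse one on the other so that both tails $k\to\pm\infty$ sum geometrically --- to close the estimate with no loss. Granting this endpoint estimate, the first two steps assemble (\ref{Eqn:StrichGain}).
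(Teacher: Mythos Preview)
The paper does not prove this proposition: it is stated as a citation from \cite{pausaderdef1} and used as a black box throughout, so there is no ``paper's own proof'' to compare against. Your sketch is therefore not a reconstruction but an independent outline of the argument in the cited reference.

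That said, your outline is essentially sound. The derivation of (\ref{Eqn:Strich}) from the $|t|^{-n/4}$ dispersive bound via Keel--Tao is the standard one and requires no comment. For (\ref{Eqn:StrichGain}) you correctly isolate the heart of the matter: one needs the inhomogeneous smoothing estimate at the $\bigl(2,\tfrac{2n}{n-2}\bigr)$ endpoint, the $TT^{*}$ kernel decays exactly like $|\tau|^{-1}$, and the bilinear interpolation in Keel--Tao is what closes the logarithmically divergent sum. One point worth flagging: the route actually taken in \cite{pausaderdef1} (and echoed in this paper through (\ref{Eqn:DispImpr}) and (\ref{Eqn:Paus})) is not the direct stationary-phase analysis of the unlocalized kernel $|\nabla|^{2}e^{i\tau\bilap}$ that you propose, but rather a Littlewood--Paley localization. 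At frequency $|\xi|\sim K$ the dispersive decay improves to $K^{n}(K^{4}|\tau|)^{-n/2}$, which is a genuine $\sigma=n/2$ rate; running Keel--Tao at that rate gives the $\bigl(2,\tfrac{2n}{n-2}\bigr)$ endpoint with a $K^{-2}$ gain on each piece, and the square-function recombination converts $K^{-2}$ into the desired $|\nabla|^{-2}$. This avoids having to produce the borderline $L^{2n/(n+2)}\to L^{2n/(n-2)}$ bound for the unlocalized multiplier by hand, and it is both cleaner and more robust than the direct kernel estimate you sketch. Either route reaches the same conclusion, but the frequency-localized one is what the literature actually does.
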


\begin{rem}
The dual inequality of (\ref{Eqn:Strich}) with $G=0$ is

\begin{equation}
\begin{array}{ll}
\| \int_{I} e^{-it \bilap}  h(t) \, dt \|_{L^{2}} & \lesssim  \| h \|_{L_{t}^{\tilde{q}} L_{x}^{\tilde{r}} (I) }
\end{array}
\nonumber
\end{equation}
\label{Rem:Dual}
\end{rem}

\section{Notation}

In this section we set some notation.\\
\\
Let $\alpha$ be a nonegative constant. We denote by $[\alpha]$ the integer part of $\alpha$. We denote by $\alpha+$ (resp.$\alpha-$) a number that
is slightly larger (resp. smaller) respectively than $\alpha$. Let $\beta$ be another nonnegative constant. We say that $\alpha \lesssim  \beta$ (resp. $\alpha  \ll \beta$ ) if there exist a positive constant (resp. small positive constant) such that
$\alpha \leq C \beta$. \\
\\
Let $r_{0}:= \frac{2n}{n-4}-$, $\tilde{r}_{0}$ be such that
$\frac{1}{\tilde{r}_{0}}=\frac{1}{r_{0}}-\frac{1}{n}$, $q_{0}$ be such that $(q_{0},r_{0})$ $B-$ admissible and Q be such that

\begin{equation}
\begin{array}{ll}
\frac{n+2}{2n} & = \frac{p-1}{Q} + \frac{1}{\tilde{r}_{0}} \cdot \\
\end{array}
\nonumber
\end{equation}
Notice that for mass supercritical-energy subcritical exponents $p$ we have $ 2 \leq  Q  < \frac{2n}{n-4}$ and in particular the Sobolev embedding applies, i.e

\begin{equation}
\begin{array}{ll}
\| f \|_{L^{Q}} & \lesssim \| f \|_{H^{2}} \cdot
\end{array}
\label{Eqn:SobH2}
\end{equation}
Let

\begin{equation}
\begin{array}{ll}
\tilde{p} & := \left\{
\begin{array}{l}
\frac{2}{p}, \, p \leq 2 \\
1, \, p > 2
\end{array}
\right.
\end{array}
\nonumber
\end{equation}
Let
\begin{equation}
\begin{array}{ll}
X(I) & = L_{t}^{q_{0}} W^{2,r_{0}}(I) \cap L_{t}^{\infty} L_{x}^{Q}(I) \cdot
\end{array}
\nonumber
\end{equation}
If $E \subset \mathbb{R}^{n}$ is a set then $\mathbf{1}_{E}$ is the characteristic function of $E$. Given $x_{0} \in \mathbb{R}^{n}$ and $R \geq 0$, let

\begin{eqnarray*}
\mathcal{B}(x_{0},R):= \left\{ x \in \mathbb{R}^{n}, |x- x_{0}| < R \right\} \cdot
\end{eqnarray*}
\\
\\
Let $\phi$ be a bump function such that $\phi(\xi)=1$ if $|\xi| \leq 1$ and $\phi(\xi)=0$ if $|\xi| \geq 2$. Let $\psi(\xi):= \phi(\xi) - \phi(2 \xi)$. If $N \in 2^{\mathbb{Z}}$ is a dyadic number then

\begin{equation}
\begin{array}{ll}
\widehat{P_{\leq N} f}(\xi) & := \phi \left( \frac{\xi}{N}  \right) \hat{f}(\xi) \\
\widehat{P_{N} f}(\xi) & := \psi \left( \frac{\xi}{N} \right) \hat{f}(\xi) \\
\widehat{P_{>N} f} (\xi) & := \hat{f}(\xi) - \widehat{P_{\leq N} f}(\xi)
\end{array}
\nonumber
\end{equation}

\section{Organization of the paper and novelties}

We first explain how this paper is organized. We follow the strategy that was developed in \cite{taocompact} in the study of
the semilinear Schr\"odinger equations with mass-supercritical energy-subcritical powers.
In Section \ref{Sec:LocalEst} we prove some local estimates, that is estimates that are only useful on
short time intervals. In Section \ref{Sec:Const} we prove that the solution $u$ can be decomposed into two parts: a free fourth-order
Schr\"odinger solution and a solution $v(t)$ of (\ref{Eqn:BiSchrod}) that shall be considered as the nonradiative part of $u$. We also characterize $v(t)$ as an integral depending on $t$ and as a weak limit of an asymptotic integral: see (\ref{Eqn:vt0}) and (\ref{Eqn:vtinf}). We would like to prove that the orbit of $v(t)$ approaches a $G$-precompact set with $J$ components. To this end it is enough, by Proposition \ref{prop:EquivCompactSpatialFreq}, to prove that $v$ is asymptotically bounded, localized in frequency and in space. In Section \ref{Sec:FreqLoc} we prove that $v$ satisfies the asymptotic frequency localization, by letting (\ref{Eqn:vt0}) interact with (\ref{Eqn:vtinf}). In Section \ref{Sec:SpatLoc} we
prove the asymptotic spatial localization. To this end, we first locate the spots of mass concentration of $v$ at a time $t_{0}$ large enough. Then, we prove that the free fourth-order Schr\"odinger solution with data far from these spots is small. This allows to estimate several quantities far away
from these spots. We prove, by a perturbation argument, that a Strichartz norm of the solution is locally small. Then the $L^{2}$ norm of $v$  is estimated by letting again (\ref{Eqn:vt0}) interact with (\ref{Eqn:vtinf}). If the interaction is large then we use the smallness of this Strichartz norm  and if the interaction is large, then we use dispersive estimates. This proves a partial spatial localization, partial since the points of concentration (and their number) depend on the size of the decay of the $L^{2}$ norm of $v$ away from them. In Subsection \ref{subsec:finalspatial}, we prove that the
partial spatial localization can be upgraded to final spatial localization.\\
\\
We now discuss the main novelties of this paper.\\
In Section \ref{Sec:FreqLoc}, one has to make asymptotically the high frequency component of the nonradiative part small
by letting the high frequency part of (\ref{Eqn:vt0}) interact with that of (\ref{Eqn:vtinf}). Unfortunately the function
$F$ is only $C^{1}$ whereas we have to control quantities in $H^{2}$.  Fortunately, since we know that the Paley-Littlewood pieces $P_{K}$ of the
free solution have better dispersion properties locally as $K$ goes to infinity (a phenomenon that, to our knowledge, was observed in \cite{pausaderdef1}), we
can fill this gap of regularity by using them extensively on a large portion of the interaction, and, on the small remaining part, we use the local estimates proved in Section \ref{Sec:LocalEst}.\\
A key estimate (namely (\ref{Eqn:EstK})) to prove the asymptotic spatial localization depends on the interaction of two fundamental
solutions evaluated at two different points: see (\ref{Eqn:DefK}). Unfortunately the fundamental solution of the fourth-order
Schr\"odinger equation does not have an explicit formula (unlike the Schr\"odinger equation), which makes the estimate delicate to prove. In order to
overcome the difficulty, we use the following strategy:

\begin{itemize}

\item we factor a well-chosen phase out of the fundamental solution and we introduce a modified fundamental solution: see
(\ref{Eqn:Phaseout}) and (\ref{Eqn:DeftildeI}).

\item we estimate the derivatives of the modified fundamental solution by weights: see (\ref{Eqn:EsttildeI}). The estimates
are mostly derived by integration by parts of the phase in the polar coordinates $(r,\theta)$. We mention that the
polar coordinates were already used in \cite{artzi}, where the authors performed integration by parts w.r.t
$\theta$ followed by integration by parts w.r.t $r$. Here, in order to use the oscillations of the phase to the most, we first determine the region for which the integration by parts w.r.t $r$ yields a better decay than that w.r.t  $\theta$. We then introduce an homogeneous function of degree zero (in the spirit of \cite{stein}, p 345) in order to emphasize this region. In this region we integrate by parts w.r.t $r$. In the complement of this region we integrate by parts w.r.t $\theta$.

\item we estimate the interaction of two fundamental solutions by using the estimates of the derivatives of the modified fundamental solution
and the bipolar coordinates $(\rho,\sigma)$, taking advantage of the symmetries of the character of the phase. When we pass to the
bipolar coordinates, the region of integration $\Rec$ is more bounded and the integrand has less regularity as we approach the boundary $\p \Rec$. So in one region
of the plane $(\rho,\sigma)$ we integrate by parts w.r.t $\rho':= \rho + \sigma$ a well-chosen amount of times to kill the singularities of the integrand. In the other region, the procedure described above does not kill the singularities. Instead we proceed as follows. We choose not to integrate by parts w.r.t  the bipolar coordinates in the subregion close to $\p \Rec$ and we estimate directly the integrals involved in this subregion, provided that they are integrable. This integrability holds for high dimensions thanks to the weights of the modified fundamental solution. It barely fails for lower dimensions thanks again to the weights. So one integrates by parts the phase just a few times w.r.t. $\rho$, or $\sigma$  to get integrability. In the region far from the boundary, once again we determine the subregion for which the integration by parts w.r.t $\rho$ yields a better decay than that w.r.t to $\sigma$ ; we then introduce an homogeneous function of degree zero; we integrate by parts w.r.t $\rho$ in this subregion and in the complement of this subregion we
integrate by parts w.r.t $\sigma$.
\end{itemize}

$\textbf{Acknowledgements}:$ The author would like to thank Terence Tao for suggesting him this problem. He also would like to thank Benoit Pausader
for interesting discussions regarding the fourth-order Schr\"odinger equations.

\section{Local estimates}
\label{Sec:LocalEst}

In this section we prove some local estimates.

\begin{prop}{\textbf{"Local estimates"}}

For all $(q,r)$ $B$-admissible, there exists $\alpha >0$ such that if
 $m \in \{0,1,2 \}$ then

\begin{equation}
\begin{array}{ll}
\| D^{m} u \|_{L_{t}^{q} L_{x}^{r}(I)} & \lesssim \langle |I| \rangle^{\alpha} \cdot
\end{array}
\label{Eqn:LocalEst}
\end{equation}
\label{Prop:LocalEst}
\end{prop}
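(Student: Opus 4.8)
The plan is to run a standard bootstrap/continuity argument on the Strichartz norms, using the global-in-time uniform $H^2$ bound \eqref{Eqn:Boundu} together with Remark \ref{rem:inhomhom} to control the nonlinearity. First I would fix a $B$-admissible pair $(q,r)$ and reduce to the endpoint-type scaling pair and to $m=2$ (the cases $m=0,1$ follow from $m=2$ by interpolation with conservation of mass, since $\|u(t)\|_{L^2}=\|u_0\|_{L^2}$ is constant, or are treated in parallel). On a subinterval $J \subseteq I$ one applies \eqref{Eqn:StrichGain} with $G=F(u)=\pm|u|^{p-1}u$: since $\nabla F(u) \sim |u|^{p-1}\nabla u$, one estimates $\|\nabla F(u)\|_{L_t^2 L_x^{2n/(n+2)}(J)} \lesssim \| \,|u|^{p-1}\|_{L_t^{a}L_x^{b}}\|\nabla u\|_{L_t^{q_0}W^{\cdots}}$ by Hölder in $x$ and then in $t$, distributing the $L_x^{2n/(n+2)}$ exponent via $\frac{n+2}{2n} = \frac{p-1}{Q}+\frac{1}{\tilde r_0}$ — which is precisely why $Q$ was defined that way in the Notation section. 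The factor $\|u\|_{L_t^\infty L_x^Q(J)}^{p-1}$ is bounded by $\|u\|_{L_t^\infty H^2}^{p-1} \lesssim M'^{p-1} < \infty$ using \eqref{Eqn:SobH2} and Remark \ref{rem:inhomhom}, so it contributes only a constant. The remaining time integration costs a power $|J|^{\theta}$ of the interval length for some $\theta>0$ coming from the gap between the $L_t^\infty$ in the $X(J)$-norm and the $L_t^2$ required by \eqref{Eqn:StrichGain}; this is where the $\langle |I|\rangle^{\alpha}$ comes from.

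Concretely, I would show an a priori estimate of the shape
\begin{equation}
\begin{array}{ll}
\| \triangle u \|_{L_t^{q} L_x^{r}(J)} & \lesssim \| \triangle u(a)\|_{L^2} + \langle |J| \rangle^{\theta} \,M'^{p-1}\, \| \triangle u \|_{L_t^{q_0}L_x^{r_0}(J)}
\end{array}
\nonumber
\end{equation}
for every $B$-admissible $(q,r)$, in particular for $(q,r)=(q_0,r_0)$ itself. A direct application of this does not close because the constant $\langle |J|\rangle^{\theta}M'^{p-1}$ need not be small. So instead I partition $I$ into $O\!\left(\langle |I|\rangle / \delta \right)$ consecutive subintervals $J_k$ each of length $\le \delta$, with $\delta$ chosen (depending only on $M'$ and $p$) so that $\langle \delta\rangle^{\theta} M'^{p-1}$ is small enough to absorb the last term into the left side; on each $J_k$ one obtains $\| \triangle u\|_{L_t^{q_0}L_x^{r_0}(J_k)} \lesssim \|\triangle u(a_k)\|_{L^2} \le M'$, hence also $\| \triangle u\|_{L_t^{q}L_x^{r}(J_k)} \lesssim M'$ for every admissible $(q,r)$. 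Summing the $q$-th powers over the $O(\langle |I|\rangle/\delta)$ subintervals (and using $\ell^q \hookrightarrow$ or just the crude bound $\|\cdot\|_{L^q(I)}^q \le \sum_k \|\cdot\|_{L^q(J_k)}^q$) yields $\| \triangle u\|_{L_t^{q}L_x^{r}(I)} \lesssim \langle |I|\rangle^{1/q} M' \lesssim \langle |I|\rangle^{\alpha}$ with, say, $\alpha = \max_{(q,r)} 1/q$ (finite since $q \ge 2$), which is the claimed bound; the cases $m=0,1$ follow identically using \eqref{Eqn:Strich} in place of \eqref{Eqn:StrichGain}, with $\|F(u)\|_{L_t^{\tilde q}L_x^{\tilde r}}$ estimated by the analogous Hölder splitting.

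The main obstacle — and the only genuinely delicate point — is the nonlinear Hölder bookkeeping that makes $\nabla F(u)$ land in the dual admissible space $L_t^2 L_x^{2n/(n+2)}$ while losing only a strictly positive power of $|J|$ and only powers of $\|u\|_{L_t^\infty H^2}$ (which are harmless, being globally finite by hypothesis). One has to check that the exponents in $\frac{n+2}{2n}=\frac{p-1}{Q}+\frac{1}{\tilde r_0}$ together with $(q_0,r_0)$ being $B$-admissible and $\tilde r_0$ defined by $\frac{1}{\tilde r_0}=\frac{1}{r_0}-\frac1n$ really are compatible with a time-Hölder exponent strictly below the $L_t^\infty$ available from $X(I)$, i.e. that $\theta>0$; this is exactly the mass-supercritical/energy-subcritical condition $1+\frac8n<p<1+\frac{8}{n-4}$ entering (so that $2\le Q<\frac{2n}{n-4}$ and \eqref{Eqn:SobH2} applies). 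Once this single computation is pinned down, the subdivision argument is entirely routine.
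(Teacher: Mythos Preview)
Your proposal is correct and follows essentially the same route as the paper: apply \eqref{Eqn:StrichGain} (and \eqref{Eqn:Strich} for $m=0$), split $\nabla F(u)$ by H\"older using exactly the exponents $\frac{n+2}{2n}=\frac{p-1}{Q}+\frac{1}{\tilde r_0}$, bound $\|u\|_{L^\infty_t L^Q_x}$ by \eqref{Eqn:SobH2} and the uniform $H^2$ control, close by a continuity argument on short subintervals, interpolate for $m=1$, and sum over a partition of $I$ to produce the $\langle|I|\rangle^\alpha$ growth. The paper's write-up is terser but the mechanism is identical.
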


\begin{proof}

Let $I=[a,b]$. From (\ref{Eqn:Boundu}), (\ref{Eqn:StrichGain}), and H\"older in time

\begin{equation}
\begin{array}{ll}
\| \triangle u \|_{L_{t}^{q} L_{x}^{r} (I)} & \lesssim \| \triangle u(a) \|_{L^{2}} +
\| \nabla F(u) \|_{ L_{t}^{2} L_{x}^{\frac{2n}{n+2}} (I) } \\
& \lesssim 1 + |I|^{\alpha} \| u \|^{p-1}_{L_{t}^{\infty} L_{x}^{Q} (I)}
\| \nabla  u \|_{L_{t}^{q_{0}} L_{x}^{\tilde{r}_{0}} (I)} \\
& \lesssim 1 + |I|^{\alpha} \| \triangle u \|_{ L_{t}^{q_{0}} L_{x}^{r_{0}} (I)}
\end{array}
\label{Eqn:BoundH2Local}
\end{equation}
By conservation of mass, (\ref{Eqn:Boundu}), (\ref{Eqn:Strich}), (\ref{Eqn:SobH2}), and Remark \ref{rem:inhomhom} we have

\begin{equation}
\begin{array}{ll}
\| u \|_{L_{t}^{q} L_{x}^{r} (I)} & \lesssim \| u (a) \|_{L^{2}} + \| |u|^{p-1} u  \|_{L_{t}^{2} L_{x}^{\frac{2n}{n+4}} (I) } \\
& \lesssim 1  + |I|^{\alpha} \| u \|^{p-1}_{L_{t}^{\infty} L_{x}^{Q} (I) } \| u \|_{ L_{t}^{q_{0}} L_{x}^{r_{0}} (I)} \\
& \lesssim 1  + |I|^{\alpha}  \| u \|_{L_{t}^{q_{0}} L_{x}^{r_{0}} (I)}
\end{array}
\label{Eqn:BoundL2Local}
\end{equation}
A continuity argument (with $(q,r):=(q_{0},r_{0})$)  shows that there exists $ 0 < \epsilon \ll 1$ such that if $|I| \leq \epsilon$ then
(\ref{Eqn:LocalEst}) holds if $m=0,2$. With this in mind, we see that, by interpolation, that (\ref{Eqn:LocalEst})
also holds if $m=1$. In the general case, if $(q,r)$ is $B-$ admissible, then we
reapply (\ref{Eqn:BoundH2Local}) and (\ref{Eqn:BoundL2Local}), taking into account that (\ref{Eqn:LocalEst}) was already proved for
the particular case $(q,r):=(q_{0},r_{0})$. Now let $I$ be an arbitrarily large interval. We divide $I$ into subintervals $|J| =\epsilon$ (except maybe the last one) and we apply to each of these subintervals (\ref{Eqn:LocalEst}); by summation we see that (\ref{Eqn:LocalEst}) holds on $I$.

\end{proof}

\section{Construction of the nonradiative part}
\label{Sec:Const}

In this section we prove that the solution can be divided into two parts: the radiative part
 and the nonradiative part. We give elementary properties of the nonradiative part.

\begin{prop}{\textbf{"Construction and properties of the non radiative part"}}
There exists a unique decomposition

\begin{equation}
\begin{array}{ll}
u(t) & = e^{i t \bilap} u_{+} + v(t)
\end{array}
\label{Eqn:Decomp}
\end{equation}
such that

\begin{equation}
\begin{array}{ll}
e^{-it \bilap} u(t) & \rightharpoonup_{t \rightarrow \infty} u_{+}
\end{array}
\label{Eqn:WeakConvUplus}
\end{equation}
and

\begin{equation}
\begin{array}{ll}
e^{-it \bilap} v(t) & \rightharpoonup_{|t| \rightarrow \infty} 0.
\end{array}
\end{equation}
Moreover

\begin{equation}
\begin{array}{ll}
\| u_{+} \|_{H^{2}} & \lesssim  1 \\
\| v(t) \|_{H^{2}} & \lesssim 1 \\
\lim_{|t| \rightarrow \infty} \| u(t) \|^{2}_{H^{2}} - \| v(t) \|^{2}_{H^{2}} - \| e^{i t \triangle^{2}} u_{+} \|_{H^{2}} & =0,
\end{array}
\label{Eqn:BounduplusvH2}
\end{equation}

\begin{equation}
\begin{array}{ll}
v(t) & = e^{it \bilap} (u(0)- u_{+}) - i \int_{0}^{t} e^{i(t-t^{'}) \bilap} F(u(t^{'})) \, dt^{'},
\end{array}
\label{Eqn:vt0}
\end{equation}
and

\begin{equation}
\begin{array}{ll}
i \int_{t}^{T} e^{i(t-t^{'}) \bilap} F(u(t^{'})) \, dt^{'} & \rightharpoonup_{T \rightarrow \infty} v(t).
\end{array}
\label{Eqn:vtinf}
\end{equation}

\label{Prop:WeakBoundConst}
\end{prop}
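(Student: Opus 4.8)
The plan is to build everything from the linearized Duhamel formula \eqref{Eqn:DuhamelBiSchrod} together with the uniform bound \eqref{Eqn:Boundu} (hence, via Remark \ref{rem:inhomhom}, the uniform $H^2$ bound) and the Strichartz machinery. First I would establish the \emph{global Strichartz integrability} of the nonlinear term: because $\| u \|_{L_t^\infty H^2(\mathbb{R})} \lesssim 1$ and $p$ is energy-subcritical, the local estimates of Proposition \ref{Prop:LocalEst} combined with a subdivision of $\mathbb{R}$ into unit-length intervals (as in the proof of \eqref{Eqn:LocalEst}) are not by themselves enough — they give polynomial-in-$|I|$ growth — so instead I would argue that on each unit interval $\| F(u) \|_{L_t^2 L_x^{2n/(n+4)}}$ and $\| \nabla F(u) \|_{L_t^2 L_x^{2n/(n+2)}}$ are $O(1)$, and then use the dual Strichartz/Remark \ref{Rem:Dual} bound $\| \int_a^b e^{-it'\bilap} F(u(t'))\,dt' \|_{L^2} \lesssim \| F(u) \|_{L_t^{\tilde q} L_x^{\tilde r}([a,b])}$ together with an orthogonality/Cauchy criterion argument to see that the tails $\int_t^T e^{i(t-t')\bilap} F(u(t'))\,dt'$ form a weakly Cauchy family in $H^2$ as $T \to \infty$. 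This is the standard device (used in \cite{taocompact}) by which one makes sense of the ``scattering-like'' object without actually having scattering.

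Next I would \emph{define} $u_+$ by \eqref{Eqn:WeakConvUplus}: consider $w(t) := e^{-it\bilap} u(t)$, which by the uniform $H^2$ bound lies in a bounded, hence weakly sequentially precompact, subset of $H^2$; using the Duhamel formula one writes $w(t) = u(0) - i\int_0^t e^{-it'\bilap} F(u(t'))\,dt'$, and the integrability just established shows that $w(t)$ has a weak limit $u_+$ as $t \to \infty$ (weak Cauchy, not merely weak subsequential limit). Set $v(t) := u(t) - e^{it\bilap} u_+$; formula \eqref{Eqn:vt0} then follows by writing $u_+ = u(0) - i\int_0^\infty e^{-it'\bilap} F(u(t'))\,dt'$ (interpreted weakly) and plugging into the Duhamel formula for $u(t)$, which after rearrangement gives $v(t) = e^{it\bilap}(u(0) - u_+) - i\int_0^t e^{i(t-t')\bilap} F(u(t'))\,dt'$. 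The weak convergence $e^{-it\bilap} v(t) \rightharpoonup 0$ as $|t|\to\infty$ is immediate in the $t\to+\infty$ direction from the definition of $u_+$; for $t \to -\infty$ one repeats the construction in backward time (here the uniform bound on all of $I_{max} = \mathbb{R}$ is what lets the same argument run), or notes that $e^{-it\bilap} v(t) = -i\int_0^t e^{-it'\bilap}F(u(t'))\,dt' + (u(0)-u_+)$ and the tail integrability forces the weak limit. Formula \eqref{Eqn:vtinf} is then just the statement that $i\int_t^T e^{i(t-t')\bilap}F(u(t'))\,dt' = e^{i(t-T)\bilap}\big(e^{iT\bilap}u_+ - \text{(boundary terms)}\big) \rightharpoonup v(t)$, obtained by subtracting the finite-time Duhamel formula from its $T\to\infty$ weak limit.

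For the quantitative bounds in \eqref{Eqn:BounduplusvH2}: $\| u_+ \|_{H^2} \lesssim 1$ because weak limits do not increase the $H^2$ norm and $\| w(t) \|_{H^2} = \| u(t) \|_{H^2} \lesssim 1$ (the propagator $e^{it\bilap}$ is unitary on every $H^s$); then $\| v(t) \|_{H^2} \leq \| u(t) \|_{H^2} + \| u_+ \|_{H^2} \lesssim 1$. The asymptotic Pythagorean identity $\lim_{|t|\to\infty} \big( \| u(t) \|_{H^2}^2 - \| v(t) \|_{H^2}^2 - \| e^{it\bilap}u_+ \|_{H^2}^2 \big) = 0$ follows by expanding $\| u(t) \|_{H^2}^2 = \| e^{it\bilap}u_+ + v(t) \|_{H^2}^2 = \| e^{it\bilap}u_+ \|_{H^2}^2 + \| v(t) \|_{H^2}^2 + 2\,\mathrm{Re}\langle e^{it\bilap}u_+, v(t)\rangle_{H^2}$ and observing that the cross term equals $2\,\mathrm{Re}\langle u_+, e^{-it\bilap}v(t)\rangle_{H^2} \to 0$ by the weak convergence $e^{-it\bilap}v(t) \rightharpoonup 0$ (pairing a fixed $H^2$ vector against a weakly null sequence, with $e^{-it\bilap}$ unitary so that the $H^2$ inner product is preserved). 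Finally, uniqueness of the decomposition \eqref{Eqn:Decomp}: if $u(t) = e^{it\bilap}u_+' + v'(t)$ with $e^{-it\bilap}v'(t) \rightharpoonup 0$, then $e^{-it\bilap}u(t) \rightharpoonup u_+'$, forcing $u_+' = u_+$ by uniqueness of weak limits, hence $v' = v$.

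The main obstacle I anticipate is the \emph{global} (as opposed to local, polynomially-growing) integrability of $F(u)$ in the relevant dual Strichartz norms needed to make the weak limits well-defined and to get the clean $O(1)$ bounds — this is where one must use that $u \in L_t^\infty H^2(\mathbb{R})$ on the \emph{whole} line, not merely on bounded intervals, and combine it carefully with interpolation and the energy-subcriticality $p < 1 + \frac{8}{n-4}$ to close the estimate on each unit interval with a constant independent of the interval's position; the weak-Cauchy (rather than merely weak-subsequential-limit) nature of the convergences also has to be extracted from the $L^2$-summability of the Duhamel tails via Remark \ref{Rem:Dual}, which is the technical heart of this proposition.
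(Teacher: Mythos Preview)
Your overall architecture (define $u_+$ as the weak limit of $e^{-it\bilap}u(t)$, set $v := u - e^{it\bilap}u_+$, derive \eqref{Eqn:vt0}--\eqref{Eqn:vtinf}, uniqueness, Pythagoras) is fine and matches the paper. The gap is precisely where you put your finger at the end: the weak-Cauchy step for $w(t) = e^{-it\bilap}u(t)$.

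Your plan is to get some kind of global dual-Strichartz control of $F(u)$ and $\nabla F(u)$ by patching together $O(1)$ bounds on unit intervals. This cannot work. In the focusing case (or any case where the solution does not scatter) the nonlinearity $F(u)$ has \emph{no} decay in time, so $\| F(u) \|_{L_t^2 L_x^{2n/(n+4)}(\mathbb{R})} = \infty$ and the tails $\int_t^T e^{-it'\bilap} F(u(t'))\,dt'$ do \emph{not} converge in $H^2$ norm --- only weakly. No ``orthogonality/Cauchy criterion'' repairs this: the unit-interval contributions are genuinely of size $\sim 1$ forever, and they are not orthogonal in any useful sense.

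The paper's device is the one you are missing: put the decay on the \emph{test function}, not on $F(u)$. For $\phi \in \mathcal{S}(\mathbb{R}^n)$ one writes
\[
\langle w(t_1) - w(t_2), \phi \rangle_{H^2} = \sum_{m=0}^{2} i \int_{t_1}^{t_2} \big\langle F(u(t')),\; D^{2m} e^{it'\bilap}\phi \big\rangle_{L^2}\, dt',
\]
moving all derivatives onto $\phi$ (this also sidesteps the problem that $F\notin C^2$, so $D^2 F(u)$ is not available). Now $\| F(u(t')) \|_{L^{\tilde p}} \lesssim 1$ uniformly by the $L_t^\infty H^2$ bound, while $\| D^{2m} e^{it'\bilap}\phi \|_{L^{\tilde p'}}$ decays like $|t'|^{-\gamma}$ with $\gamma > 1$ by interpolating the dispersive estimate \eqref{Eqn:Dispers} with $L^2$ conservation. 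The $t'$-integral is therefore absolutely convergent on $[t_1,\infty)$, and its tail goes to zero as $t_1 \to \infty$. This gives weak-Cauchy against Schwartz functions, hence (by the uniform $H^2$ bound and density) in $H^2$. Once you have this, the rest of your write-up goes through unchanged.
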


\begin{proof}
We shall only prove the existence of this decomposition, since the uniqueness along with the estimates are straightforward application of arguments explained
in \cite{taocompact}. Let $\phi \in S(\mathbb{R}^{n})$ and $t_{2} \geq t_{1}$. Let $\epsilon >0$. We have for $t_{1}$ large

\begin{equation}
\begin{array}{ll}
\langle e^{-i t_{1} \bilap} u(t_{1})- e^{-i t_{2} \bilap} u(t_{2}), \phi \rangle_{H^{2}} & = \sum_{m \in \{ 0,1,2 \}} i X_{m}
\end{array}
\nonumber
\end{equation}
with

\begin{equation}
\begin{array}{ll}
 X_{m} & := \langle  D^{m} \int_{t_{1}}^{t_{2}}  e^{-it^{'} \bilap} F(u(t^{'})) \, dt^{'}, \, D^{m} \phi  \rangle_{L^{2}} \\
 \end{array}
 \nonumber
 \end{equation}
One would like to compute $D^{2}F(u(t'))$ but it is not possible since $F \notin C^{2}$.  Instead we proceed as follows

 \begin{equation}
 \begin{array}{ll}
X_{m} & = \int_{t_{1}}^{t_{2} } \langle  F(u(t^{'})), \, D^{2m} e^{it^{'} \bilap} \phi \rangle_{L^{2}} \, dt^{'} \\
 & \lesssim  \int_{t_{1}}^{t_{2} }  \| F(u(t^{'})) \|_{L^{\tilde{p}}} \| D^{2m} e^{it^{'} \bilap} \phi \|_{L^{\tilde{p}^{'}}} \, dt^{'} \\
 & \lesssim \epsilon,
\end{array}
\nonumber
\end{equation}
where the last inequality follows from interpolation between (\ref{Eqn:Dispers}) and the trivial estimate
$ \| D^{2m} e^{ i t' \bilap} \phi \|_{L^{2}} =  \| D^{2m} \phi \|_{L^{2}}$, and the following equality

\begin{equation}
\begin{array}{ll}
\| F(u(t^{'})) \|_{L^{\tilde{p}}}  & \lesssim
\left\{
\begin{array}{l}
\| u(t^{'})\|^{p}_{L^{2}}, \, p  \leq 2 \\
\| u(t^{'}) \|^{p}_{L^{p}}, \, p > 2
\end{array}
\right. \\
& \lesssim 1,
\end{array}
\label{Eqn:BoundFu}
\end{equation}
that is derived from Remark \ref{rem:inhomhom}. This proves that there exists $u_{+} \in H^{2}$ such that
$e^{-it \bilap} u(t) \rightharpoonup_{t \rightarrow \infty} u_{+} $. We define $v(t):= u(t) - e^{it \bilap} u_{+}$.

\end{proof}

\section{Asymptotic frequency localization}
\label{Sec:FreqLoc}

In this section we prove the asymptotic frequency localization of the nonradiative part of the solution.

\begin{prop}{\textbf{"Asymptotic frequency localization"}}
The non radiative part $v$ of the solution $u$ is asymptotically localized in the frequency domain.

\label{Prop:FreqLocal}
\end{prop}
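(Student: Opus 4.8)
The plan is to establish the two frequency-localization bounds of Proposition \ref{prop:EquivCompactSpatialFreq} for $v$ by exploiting the two representations (\ref{Eqn:vt0}) and (\ref{Eqn:vtinf}) of the nonradiative part: the former gives $v(t)$ as a backward Duhamel integral from $0$, the latter expresses it (weakly) as a forward integral from $t$ to $\infty$. The key idea, following \cite{taocompact}, is to pair these two against each other via the dual Strichartz estimate of Remark \ref{Rem:Dual}: for $\mu$ small we want
\begin{equation}
\overline{\lim}_{t \rightarrow \infty} \| P_{\geq \mu^{-1}} v(t) \|_{H^{2}} \leq \epsilon, \qquad \overline{\lim}_{t \rightarrow \infty} \| P_{\leq \mu} v(t) \|_{H^{2}} \leq \epsilon.
\end{equation}
The low-frequency bound $\| P_{\leq \mu} v(t) \|_{H^{2}}$ is the easier half: on $P_{\leq \mu}$ the $H^{2}$ norm is comparable to the $L^{2}$ norm up to a factor $\mu^{2}$, and one estimates $\| P_{\leq \mu} v(t) \|_{L^{2}}$ by testing the weak representation against itself — writing $\| P_{\leq \mu} v(t) \|_{L^{2}}^{2}$ as a double integral $\int \int \langle P_{\leq\mu} e^{i(t-t')\bilap} F(u(t')), P_{\leq\mu} e^{i(t-t'')\bilap} F(u(t''))\rangle$, passing the phases onto one factor, and using the dispersive estimate (\ref{Eqn:Dispers}) together with the $L^{1}$-in-time type bounds $\|F(u)\|_{L^{\tilde p}} \lesssim 1$ from (\ref{Eqn:BoundFu}) and the Strichartz/local estimates of Section \ref{Sec:LocalEst}. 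The smallness in $\mu$ comes from the symbol $\phi(\xi/\mu^{-1})$, which restricts to $|\xi| \lesssim \mu$ and forces a gain as $\mu \to 0$.

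The high-frequency bound is where the announced novelty enters and will be the main obstacle. We want to make $\| P_{\geq \mu^{-1}} v(t) \|_{H^{2}}$ small; using (\ref{Eqn:vt0}) and (\ref{Eqn:vtinf}) this reduces to controlling the interaction
\begin{equation}
\left| \int_{0}^{t}\!\!\int_{t}^{\infty} \langle P_{\geq\mu^{-1}} e^{i(t-t')\bilap} F(u(t')),\ P_{\geq\mu^{-1}} e^{i(t-t'')\bilap} F(u(t''))\rangle_{H^{2}}\ dt''\, dt' \right|,
\end{equation}
plus a genuinely free piece coming from $e^{it\bilap}(u(0)-u_{+})$ whose high-frequency tail vanishes as $\mu\to 0$ by dominated convergence since $u(0)-u_{+}\in H^{2}$. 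The difficulty is that the $H^{2}$ pairing wants two derivatives on each $F(u)$, but $F$ is only $C^{1}$; one cannot simply put $D^{2}$ on $F(u(t'))$. The remedy is to split the double integral into a large ``dispersive'' portion and a small ``local'' remainder near the diagonal $t'\approx t''\approx t$. On the dispersive portion, instead of differentiating $F(u)$, one moves all derivatives onto the propagators and uses that the Paley--Littlewood pieces $P_K e^{is\bilap}$ of the free evolution have improved local dispersive decay as $K\to\infty$ (the phenomenon from \cite{pausaderdef1}); summing the geometric gain over dyadic $K \geq \mu^{-1}$ against the $L^{1}$-type control $\|F(u)\|_{L^{\tilde p}}\lesssim 1$ and $\|F(u)\|_{L^{2}L^{2n/(n+4)}}$ from (\ref{Eqn:BoundL2Local}), this portion is $O(\mu^{\delta})$ for some $\delta>0$. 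On the remaining small region near the diagonal — whose length can be taken $\leq \eta$ for any prescribed $\eta$ by choosing the splitting point close to $t$ — one does not try to be dispersive but instead invokes the local estimates of Proposition \ref{Prop:LocalEst}, i.e. $\|D^m u\|_{L^q_t L^r_x(J)} \lesssim \langle|J|\rangle^{\alpha}$ on an interval $J$ of length $\lesssim \eta$, combined with the $C^1$ chain rule $|\nabla F(u)| \lesssim |u|^{p-1}|\nabla u|$ (which needs only one derivative), plus the Strichartz estimate with gain of derivative (\ref{Eqn:StrichGain}) to recover the second derivative in $H^{2}$; this contributes $O(\eta^{\alpha'})$.

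Putting the pieces together: given $\epsilon$, first choose $\eta$ so that the diagonal/local contribution is $\leq \epsilon/3$, then choose $\mu$ small enough that the dispersive contribution is $\leq \epsilon/3$ and the free high-frequency tail is $\leq \epsilon/3$; taking $\overline{\lim}_{t\to\infty}$ yields the desired high-frequency bound, and together with the low-frequency bound and the a priori bound $\|v(t)\|_{H^2}\lesssim 1$ from (\ref{Eqn:BounduplusvH2}) this is exactly the asymptotic frequency localization of $v$. I expect the bookkeeping around the decomposition of the time double-integral, and making the dispersive gain from the high-frequency Paley--Littlewood pieces summable against the relatively weak ($C^1$-only) control on $F(u)$, to be the technically delicate core of the argument; everything else — the low-frequency half, the free-tail estimate, and the local-interval estimate — is comparatively routine given the tools already assembled.
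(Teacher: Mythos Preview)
Your overall architecture is right and tracks the paper closely: expand $\|Q_N v(t)\|_{H^2}^2$ using (\ref{Eqn:vt0}) for one factor and (\ref{Eqn:vtinf}) for the other, isolate the free piece, and estimate the double Duhamel interaction $Z_2$ by combining dispersive decay off the diagonal with local Strichartz control near it. For high frequencies your treatment is essentially the paper's, with one harmless variation: you cut the time double-integral at a fixed scale $\eta$ (and then take $\mu$ small depending on $\eta$), whereas the paper cuts at the $K$-dependent scale $|t'-t''|=K^{-\alpha}$ inside the Paley--Littlewood sum $\sum_{K\ge N}$. Both organize the same ingredients; your version is arguably simpler to set up, and your use of (\ref{Eqn:StrichGain}) on each short Duhamel piece to recover the missing derivative on $F(u)$ is exactly how the paper closes the near-diagonal term (via (\ref{Eqn:Paus}) there).

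The gap is in the low-frequency half, which you dismiss as ``easier''. It is not, for the biharmonic equation. The standard dispersive estimate (\ref{Eqn:Dispers}) you invoke yields only $|t'-t''|^{-(n/4)(1-2/\tilde p')}$ after interpolation; for $5\le n\le 8$ one necessarily has $p>2$, hence $\tilde p=1$ and the exponent is $n/4\le 2$, so the double integral $\int_t^T\!\int_0^t$ does not converge. The paper handles this by doing a Paley--Littlewood decomposition $P_{\le N}=\sum_{K\le N}P_K$ and using, for each $K$, the \emph{localized} dispersive bound (\ref{Eqn:DispImpr}) --- the same $K^{-n}|t|^{-n/2}$ estimate you correctly cite for high frequencies --- combined with the Bernstein bound $\|P_K F(u)\|_{L^2}\lesssim K^{n(1/\tilde p-1/2)}\|F(u)\|_{L^{\tilde p}}$; taking the minimum of the two is what simultaneously makes the time integral converge and produces a positive power of $N$. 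Your sentence ``the smallness in $\mu$ comes from the symbol\ldots which forces a gain'' is not a mechanism; the gain is Bernstein, and it must be played against a $K$-dependent dispersive rate, not (\ref{Eqn:Dispers}). One further caution: ``testing the weak representation against itself'' must mean pairing (\ref{Eqn:vt0}) against (\ref{Eqn:vtinf}) (as you do in your high-frequency display), so that $t''<t<t'$ and the integrand is nowhere singular; using (\ref{Eqn:vtinf}) for both factors would fail for the same reason.
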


\begin{proof}
It is enough to prove that, given $\epsilon > 0$, there exists $t_{\epsilon}  \geq 0$ such that for $t \geq  t_{\epsilon}$

\begin{equation}
\begin{array}{ll}
\| P_{\geq N} v(t) \|_{H^{2}} & \lesssim N^{-\eta} + \epsilon
\end{array}
\label{Eqn:HighFreqBound}
\end{equation}
for $N \geq 1$ and

\begin{equation}
\begin{array}{ll}
\| P_{\leq N} v(t) \|_{H^{2}} & \lesssim N^{\eta} + \epsilon
\end{array}
\label{Eqn:LowFreqBound}
\end{equation}
for $N \leq 1$. Let

\begin{equation}
Q_{N}:=
\left\{
\begin{array}{ll}
P_{\leq N} & \, N \leq 1 \\
P_{\geq N} & \, N \geq 1
\end{array}
\right.
\nonumber
\end{equation}
We choose  $u_{\epsilon} \in \mathcal{S}(\mathbb{R}^{n})$ such that

\begin{equation}
\begin{array}{ll}
u(0)-u_{+} & = u_{\epsilon} + O_{H^{2}}(\epsilon^{2}).
\end{array}
\nonumber
\end{equation}
By (\ref{Eqn:vt0}) and (\ref{Eqn:vtinf})

\begin{equation}
\begin{array}{ll}
i \int_{t}^{T} e^{i(t-t^{'}) \bilap} Q_{N} F(u(t^{'})) \, dt^{'} \rightharpoonup_{T \rightarrow \infty} Q_{N}v(t),
\end{array}
\label{Eqn:FirstRepn}
\end{equation}

\begin{equation}
\begin{array}{ll}
Q_{N}v(t) &  = e^{i t \bilap} Q_{N} u_{\epsilon} - i \int_{0}^{t} e^{i(t-t^{''}) \bilap} Q_{N} F(u(t^{''})) \, dt^{''} + O_{H^{2}} (\epsilon),
\end{array}
\label{Eqn:SecondRepn}
\end{equation}
Hence we see that for $T$ large enough

\begin{equation}
\begin{array}{ll}
\| Q_{N} v(t) \|^{2}_{H^{2}} & \lesssim |Z_{1}| + |Z_{2}| + O_{H^{2}}(\epsilon^{2})
\end{array}
\nonumber
\end{equation}
with

\begin{equation}
\begin{array}{ll}
Z_{1}:= & \langle \int_{t}^{T} e^{i(t-t^{'}) \bilap} Q_{N} F(u(t^{'})) \, dt^{'}, \, Q_{N} e^{it \bilap} u_{\epsilon} \rangle_{H^{2}}, \; \text{and} \\
Z_{2}:= & \langle \int_{t}^{T} \int_{0}^{t}  e^{i(t-t^{'}) \bilap} Q_{N} F(u(t^{'})), \ e^{i(t-t^{''}) \bilap} Q_{N} F(u(t^{''}))  \, dt^{''} \, dt^{'}
\rangle_{H^{2}} \cdot
\end{array}
\end{equation}

We first deal with $Z_{1}$. Using (\ref{Eqn:BoundFu}) and interpolating between (\ref{Eqn:Dispers}) and the $L^{2}$ estimate (see previous section), we see that $Z_{1}$ is the sum of terms of the form $X_{1,m}$ ($ 0 \leq m \leq 2$) with

\begin{equation}
\begin{array}{ll}
X_{1,m} & :=  \int_{t}^{T} \langle Q_{N} F(u(t^{'})), \, D^{2m} e^{i t^{'} \bilap} Q_{N} u_{\epsilon}  \rangle_{L^{2}} \, dt^{'} \\
& \lesssim
\int_{t}^{T} \frac{ \| F(u(t^{'})) \|_{L^{\tilde{p}}} } { \left( t^{'} \right)^{ \frac{n+2m}{4} \left( 1 - \frac{2}{\tilde{p}^{'}}  \right) }} \, dt^{'}, \\
& \lesssim \epsilon^{2}
\end{array}
\nonumber
\end{equation}
for $t$ large enough. \\
We then deal with $Z_2$. First we list some estimates. Recall the localized
dispersive estimate \cite{pausaderdef1} (for $K \in 2^{\mathbb{Z}}$) \footnote{
The author would like to thank Benoit Pausader  for suggesting him to use
(\ref{Eqn:DispImpr}). This argument allowed him to prove Theorem \ref{Thm:WeakSoliton}
for $n \geq 5$ (in the previous version Theorem \ref{Thm:WeakSoliton} was proved for $n>8$.)}

\begin{equation}
\begin{array}{ll}
\| e^{i t \bilap} P_{K} f  \|_{L^{\infty}} & \lesssim \frac{K^{n}}{ ( K^{4} t )^{\frac{n}{2}}} \| f \|_{L^{1}} \cdot
\end{array}
\label{Eqn:DispImpr}
\end{equation}
Recall the localized Strichartz estimate, proved in \cite{pausaderdef1}

\begin{equation}
\begin{array}{ll}
\left\| \int  P_{K} e^{i (t-s) \bilap} h(s) \, ds \right\|_{L_{t}^{2} L_{x}^{\frac{2n}{n-2}}} & \lesssim K^{-2} \| P_{K} h \|_{L_{t}^{2} L_{x}^{\frac{2n}{n+2}}} \cdot
\end{array}
\label{Eqn:Paus}
\end{equation}
In view of Remark \ref{rem:inhomhom} we have

\begin{equation}
\begin{array}{ll}
\| \nabla F(u(t')) \|_{L^{\tilde{p}}}  & \lesssim \left\{
\begin{array}{l}
\| u(t') \|^{p-1}_{L^{2}} \| \nabla u(t')  \|_{L^{2}}, \, p \leq 2 \\
\| u(t') \|^{p-1}_{L^{2(p-1)}}  \| \nabla u(t')  \|_{L^{2}}, \, p > 2
\end{array}
\right. \\
& \lesssim 1
\end{array}
\nonumber
\end{equation}
We are now in position to estimate $Z_{2}$. \\
First we assume that $N \leq 1$. Integrating by part we see that $Z_{2}$ is the sum of terms of the form $X_{2,m}$ with

\begin{equation}
\begin{array}{ll}
X_{2,m} & := \int_{t}^{T} \int_{0}^{t} \langle D^{m} P_{\leq N}  F(u(t^{'})), \, D^{m} P_{\leq N}  e^{i (t^{'} - t^{''}) \bilap}  F(u(t^{''})) \rangle_{L^{2}}
\, dt^{''} \, dt^{'}.
\end{array}
\nonumber
\end{equation}
Performing a Paley-Littlewood decomposition we see that  $ X_2 = \sum_{K \leq N} \int_{t}^{T} \int_{0}^{t} X_{2,K,m} \; d t' \; d t'' $ with

\EQQARR{
X_{2,K,m} := \langle D^{m} P_{K} F(u(t^{'})), \, D^{m}  e^{i (t^{'} - t^{''})  \bilap } P_{K} F(u(t^{''})) \rangle_{L^{2}}
}
One one hand, we get from (\ref{Eqn:BoundFu})

\begin{equation}
\begin{array}{ll}
|X_{2,K,m}| & \lesssim \| F(u(t^{'})) \|_{L^{\tilde{p}}}
\|  e^{i (t^{'} - t^{''})  \bilap } P_{K} F(u(t^{''}))  \|_{L^{\tilde{p}^{'}}} \\
& \lesssim \left(  \frac{1}{ K^{n} |t^{'} - t^{''}|^{\frac{n}{2}}} \right)^{ 1 -\frac{2}{\tilde{p}^{'}} }
\end{array}
\nonumber
\end{equation}
On the other hand (by Bernstein)

\begin{equation}
\begin{array}{ll}
| X_{2,K,m} |  & \lesssim
\| P _{K} F(u(t^{'})) \|_{L^{2}} \| P_{K} F(u(t^{''})) \|_{L^{2}} \\
& \lesssim K^{ 2 n \left( \frac{1}{_{\tilde{p}}} - \frac{1}{_{2}}  \right)}  \| F(u(t^{'})) \|_{L^{\tilde{p}}} \| F(u(t^{''})) \|_{L^{\tilde{p}}} \\
& \lesssim   K^{ 2 n \left( \frac{1}{_{\tilde{p}}} - \frac{1}{_{2}}  \right)}.
\end{array}
\nonumber
\end{equation}
Therefore, since $n \geq 5$, we conclude that

\begin{equation}
\begin{array}{ll}
Z_{2} & \lesssim \sum_{K \leq N}   \int_{t}^{T} \int_{0}^{t}
\min \left( \left(  \frac{1}{ K^{n }|t' - t''|^{\frac{n}{2}}}  \right)^{ 1 -\frac{2}{\tilde{p}^{'}}}, K^{ 2 n \left( \frac{1}{_{\tilde{p}}} - \frac{1}{_{2}}  \right)}  \right) \, dt' \, dt'', \\
& \lesssim N^{2 \eta},
\end{array}
\end{equation}
for some $\eta>0$.

Next we assume that $N \geq 1$. High frequencies are more complicated to deal with: indeed $F \in C^{1}$ (so we can only expect
to control norms involving the gradient of $F(u)$) whereas the the expressions involved lie in $H^{2}$. The idea here is again to
perform a Paley-Littlewood decomposition. Since the dispersive estimates for the high frequency Paley-Littlewood
pieces $P_{K}$ ($K \geq N$) are better as $K$ goes to infinity, we use them on a larger portion of the area of interaction between
(\ref{Eqn:FirstRepn}) and (\ref{Eqn:SecondRepn}) and, since the size of the remaining part of the interaction is small one can exploit it by using basic inequalities such as H\"older-in-time. Integrating by part and performing a Paley-Littlewood decomposition we see that $ Z_{2}  = \sum_{K \geq N} Z_{2,1,K} +  Z_{2,2,K} $ with

\begin{equation}
\begin{array}{ll}
Z_{2,1,K} & := \int_{t}^{T} \int_{0}^{t} \mathbf{1}_{|t^{'} - t^{''}| \geq K^{-\alpha}}
\langle \tilde{P}_{K} F(u(t^{'})), \, e^{i(t^{'} - t^{''}) \bilap} \tilde{P}_{K} F(u(t^{''})) \rangle_{H^{2}} \, dt^{''} \, dt^{'},
\end{array}
\nonumber
\end{equation}

\begin{equation}
\begin{array}{ll}
Z_{2,2,K} & := \int_{t}^{T} \int_{0}^{t} \mathbf{1}_{|t^{'} - t^{''}| < K^{-\alpha}}
\langle \tilde{P}_{K} F(u(t^{'})), \, e^{i(t^{'} - t^{''}) \bilap} \tilde{P}_{K} F(u(t^{''})) \rangle_{H^{2}} \, dt^{''} \, dt^{'},
\end{array}
\nonumber
\end{equation}
$ 0 < \alpha < \frac{n \left( 1 - \frac{2}{\tilde{p}^{'}} \right) -2 }{\frac{n}{2} \left( 1 - \frac{2}{\tilde{p}^{'}} \right) -2 }$, and
$\tilde{P}_{K}$ a frequency localized operator at $|\xi| \sim K$ (like $P_{K}$). $Z_{2,1,K}$ can be written as a sum of terms $X_{m}$ of the form ($m \leq 2$)

\begin{equation}
\begin{array}{ll}
X_{1,m}  & := K^{m} \int_{t}^{T} \int_{0}^{t} \mathbf{1}_{|t^{'} - t^{''}| \geq K^{-\alpha}}
\langle \nabla \tilde{P}_{K} F(u(t^{'})), \, e^{i(t^{'} - t^{''}) \bilap} \nabla \tilde{P}_{K} F(u(t^{''})) \rangle_{L^{2}} \, dt^{''} \, dt^{'}.
\end{array}
\nonumber
\end{equation}
Interpolating between (\ref{Eqn:DispImpr}) and the $L^{2}$ estimate

\begin{equation}
\begin{array}{ll}
|X_{1,m}| & \lesssim K^{2} \int_{t}^{T} \int_{0}^{t} \mathbf{1}_{|t^{'} - t^{''}| \geq K^{-\alpha}} \| \nabla F(u(t^{'})) \|_{L^{\tilde{p}}}
\| e^{i(t^{'} - t^{''}) \bilap} \nabla \tilde{P}_{K} F(u(t^{''})) \|_{L^{\tilde{p}^{'}}} \, dt^{''} \, dt^{'} \\
& \lesssim_{M} \int_{t}^{T} \int_{0}^{t}  K^{2} \mathbf{1}_{|t^{'} - t^{''}| \geq K^{-\alpha}}
\left( \frac{1} { |t' -  t^{''}|^{\frac{n}{2}} K^{n}} \right)^{1 - \frac{2}{\tilde{p}^{'}}} \, dt^{''} \, dt^{'} \\
& \lesssim_{M} K^{-2\eta},
\end{array}
\nonumber
\end{equation}
for some $\eta >0$, since $n \geq 5$. $Z_{2,2,K}$ can be written as a sum of terms $X_{2,m}$ ($m \leq 2$)

\begin{equation}
\begin{array}{ll}
X_{2,m} & := K^{m} \int_{t}^{t+K^{-\alpha}} \int_{t- K^{-\alpha}}^{t} \mathbf{1}_{|t^{'}-t^{''}| < K^{- \alpha}} \nabla \tilde{P}_{K} F(u(t^{'})) \nabla e^{i(t^{'} - t^{''}) \bilap} \tilde{P}_{K} F(u(t^{''}))
\, dt^{''} \, dt^{'}
\end{array}
\end{equation}
We have, by (\ref{Eqn:Paus}) and Proposition \ref{Prop:LocalEst} (and its proof)

\begin{equation}
\begin{array}{ll}
| X_{2,m} | & \lesssim  \| \nabla \tilde{P}_{K} F(u) \|_{L_{t}^{2} L_{x}^{\frac{2n}{n+2}} ([t,t+K^{-\alpha}]) }
\| \nabla \tilde{P}_{K} F(u) \|_{L_{t}^{2} L_{x}^{\frac{2n}{n+2}} ([t-K^{-\alpha},t]) } \\
& \lesssim K^{-2 \eta} \left( \| u \|^{p-1}_{L_{t}^{\infty} L_{x}^{Q} ([t,t+K^{-\alpha}])}
\| \nabla u \|_{L_{t}^{q_{0}} L_{x}^{\tilde{r}_{0}} ([t-K^{-\alpha},t])} + \| u \|^{p-1}_{L_{t}^{\infty} L_{x}^{Q} ([t,t+K^{-\alpha}])}
\| \nabla u   \|_{L_{t}^{q_{0}} L_{x}^{r_{0}} ([t,t+K^{-\alpha}]) } \right) \\
& \lesssim K^{-2 \eta},
\end{array}
\nonumber
\end{equation}
for some $\eta > 0$.

\end{proof}

\section{Asymptotic spatial localization }
\label{Sec:SpatLoc}

In this section we prove the asymptotic spatial localization. The asymptotic spatial localization relies upon the asymptotic partial spatial localization
and the asymptotic final spatial localization.

\subsection{Asymptotic partial spatial localization}

In this subsection we prove the asymptotic partial spatial localization property of the nonradiative part. First we define some constants and we set up
the framework.

Let $1 \gg \mu_{1}$ be a fixed constant and let  $ \mu_{1} \gg \mu_{2} \gg \mu_{3}
\gg  \mu_{4} \gg \mu_{5} \gg \mu_{6} >0$ be constants depending on $\mu_{1}$
that are chosen such that all the inequalities in this section are true. Let also $c$, $C$ denote a small, large constant whose value can change
from one line to the other one.

We use the decomposition (\ref{Eqn:Decomp}), (\ref{Eqn:HighFreqBound}), and (\ref{Eqn:LowFreqBound}) to get (with $N \gtrsim 1$)

\begin{equation}
\begin{array}{ll}
u(t) & = e^{it \bilap} u_{+} + v_{N}(t) + O_{H^{2}}(N^{-\eta}),
\end{array}
\nonumber
\end{equation}
with $v_{N}(t):= P_{ \frac{1}{N}  \leq  \cdot \leq N} v(t)$. Let
$ A:= \{ x \in \mathbb{R}^{n}, \, |v_{N}(t,x)| \geq \mu^{c}_{3}  \} $. Given $t >0$,  we
construct inductively a sequence of points $\{ x_{j}(t,\mu_{3}) \}_{1 \leq j \leq \bar{J}}$ in the following fashion:

\begin{enumerate}

\item Initially let $X:=[\emptyset]$ and $j=1$
\item If $A \neq \emptyset$ , then choose $y \in A$, let $x_{1}(t,\mu_{3}):=y$ and $X:=[x_{1}(t,\mu_{3})]$;
\item While $A \cap B_{j} \neq \emptyset $ with
$B_{j}:= \cap_{ 1 \leq k \leq j} \{ x \in \mathbb{R}^{n}, \, |x-x_{k}(t,\mu_{3})| \geq \frac{1}{2 \mu_{3}} \} $ do the following

\begin{itemize}
\item  choose $y \in A \cap B_{j}$
\item let $j:=j+1$
\item let $x_{j}(t,\mu_{3}):=y$ and let $X:=[X,x_{j}(t,\mu_{3})]$
\end{itemize}

\end{enumerate}
This construction is finite: let us see why. We see that for $ 1 \leq j \leq \card{(X)}$ $v_{ N}(t,x_{j}(t,\mu_{3}))$
is a sum of two elements of the form

\begin{equation}
\tilde{v}_{M}(t,x_{j}(t,\mu_{3})) := M^{n} \int \tilde{\phi}(M(x_{j}(t,\mu_{3})-y)) v(t,y) \, dy,
\nonumber
\end{equation}
with $\bar{N} \in \left\{ \frac{1}{N},N \right\}$ and $\widehat{\tilde{\phi}}$ a localized bump around  $|\xi| \lesssim 1$. Using the fast
decay of $\check{\phi}$ on the region $|\bar{N}(x_{j}(t,\mu_{3})-y)| \geq R$ we see that for $\gamma$ arbitrarily large

\begin{equation}
\begin{array}{ll}
\left| \tilde{v}_{\bar{N}}(x_{j} \left( t,\mu_{3}),t \right) \right| & \lesssim \bar{N}^{\frac{n}{2}}
R^{\frac{n}{2}} \int_{|y-x_{j}(t,\mu_{3})| \leq \frac{R}{\bar{N}}} |v(t,y)|^{2} \, dy + \frac{\bar{N}^{\frac{n}{2}}}{R^{\gamma}}
\end{array}
\label{Eqn:ExpProj}
\end{equation}
Letting $ N \sim \mu_{2}^{-1}$ and $R \sim \mu_{3}^{-c}$, we see that

\begin{equation}
\begin{array}{ll}
\int_{|x-x_{j}(t,\mu_{3})| \leq \frac{1}{4 \mu_{3}}} |v(t,x)|^{2} \, dx & \gtrsim \mu_{3}^{C} \cdot
\end{array}
\end{equation}
By (\ref{Eqn:BounduplusvH2}), we see that $\card{(X)} \leq J := J(\mu_{3}) \leq \mu_{3}^{-C}$. In order to make the cardinal constant we set for $j$, $ \card{(J)} < j \leq J(\mu_{3})$, $x_{j}(t,\mu_{3}):=x_{\tilde{J}}(t,\mu_{3})$.
By  maximality we have

\begin{equation}
\begin{array}{l}
|v_{N}(t,x)| < \mu^{c}_{3}, \, if \, \,  inf_{1 \leq j \leq J} |x-x_{j}(t,\mu_{3})| \geq \frac{1}{2 \mu_{3}}
\end{array}
\label{Eqn:BoundLinfty}
\end{equation}
Notice that, since $\mu_{3}$ is a function of $\mu_{1}$ we write $x_{j}(t,\mu_{1})$, $J(\mu_{1})$ instead of $x_{j}(t,\mu_{3})$, $J(\mu_{3})$
respectively in the sequel. We are now in position to state the asymptotic partial spatial localization property:

\begin{prop}{\textbf{" Asymptotic partial spatial localization"}}
We have

\begin{equation}
\begin{array}{ll}
\overline{\lim}_{t \rightarrow \infty} \int_{\inf_{1 \leq j \leq J} | x- x_{j}(t,\mu_{1})| \geq  \mu^{-1}_{6}} |v(t,x)|^{2} \, dx & \lesssim
\mu^{c}_{1} \cdot
\end{array}
\end{equation}
\label{prop:spat}
\end{prop}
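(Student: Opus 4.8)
The plan is to estimate the $L^2$ mass of $v(t)$ outside the union of balls $\mathcal{B}(x_j(t,\mu_1),\mu_6^{-1})$ by comparing the two representations (\ref{Eqn:vt0}) and (\ref{Eqn:vtinf}) of $v(t)$ against each other, exactly as in the frequency-localization argument, but now with a cutoff in the spatial variable rather than the frequency variable. First I would fix a cutoff $\chi$ supported on $\{\inf_j |x-x_j(t,\mu_1)|\geq \mu_6^{-1}\}$, write $\|\chi v(t)\|_{L^2}^2$ as a pairing, and substitute (\ref{Eqn:SecondRepn})-type decompositions; after an application of Cauchy--Schwarz and the bound $\|v(t)\|_{H^2}\lesssim 1$ from (\ref{Eqn:BounduplusvH2}), it suffices to control a bilinear quantity $Z$ of the schematic form $\int_t^T\int_0^t \langle \chi\, e^{i(t-t')\bilap}F(u(t')),\, e^{i(t-t'')\bilap}F(u(t''))\rangle\,dt''\,dt'$ together with a term involving the free evolution $e^{it\bilap}u_\epsilon$ paired against $\chi$. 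The latter is handled by replacing $u(0)-u_+$ by a Schwartz approximant and invoking dispersive decay, plus the fact that for $t$ large the points $x_j(t,\mu_1)$ have wandered so that the free solution with data far from them is small on the relevant region — this is precisely the heuristic flagged in the introduction, ``the free fourth-order Schr\"odinger solution with data far from these spots is small''.

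The heart of the matter is the bilinear term $Z$. I would split the time integration into the region $|t'-t''|$ large, where the dispersive estimate (\ref{Eqn:Dispers}) (interpolated with the trivial $L^2$ bound, as in Section \ref{Sec:FreqLoc}) gives an integrable power of $|t'-t''|^{-1}$ for $n\geq 5$, and the region $|t'-t''|$ small. In the small-separation region the propagators cannot supply decay, so instead I would run a perturbation argument: on a short time window around $t$, the equation (\ref{Eqn:BiSchrod}) is a small perturbation of the free equation with data equal to $v(t)$ (modulo the radiative part and the low/high-frequency tails $O_{H^2}(N^{-\eta})$), and since $v(t)$ is spatially concentrated near the $x_j$ up to mass $\mu_3^C$ by the construction preceding the statement — in particular (\ref{Eqn:BoundLinfty}) controls the $L^\infty$ norm of the frequency-truncated piece away from the centers — one gets that the relevant Strichartz norm of $u$ localized away from the centers is small, of size a power of $\mu_1$. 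Feeding this local smallness back into the bilinear pairing over the short window closes the estimate. The geometric separation of the balls $\mathcal{B}(x_k(t,\mu_3),\tfrac{1}{2\mu_3})$ built into the inductive construction, together with the bound $J\leq \mu_3^{-C}$, is what keeps all the constants under control and lets the finitely many centers be treated simultaneously.

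The main obstacle I anticipate is the small-separation region: making rigorous that ``a Strichartz norm of the solution is locally small'' away from the mass-concentration spots. This requires (i) propagating the spatial localization of $v(t)$ — known only in the truncated $L^\infty$ sense (\ref{Eqn:BoundLinfty}) and in the $L^2$ sense through the construction — forward through a short time interval using finite speed-of-propagation-type heuristics, which are delicate for the fourth-order operator since $e^{it\bilap}$ has no finite propagation speed and one must instead exploit dispersive/kernel decay of $e^{it\bilap}$ off-diagonal on short times; and (ii) a careful bookkeeping of how the errors $O_{H^2}(N^{-\eta})$ and $O_{H^2}(\epsilon)$, the radiative contribution $e^{it\bilap}u_+$, and the nonlinear Duhamel term interact under the spatial cutoff $\chi$, so that the perturbation lemma (a variant of Proposition \ref{Prop:LocalEst}) applies. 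Once the local Strichartz smallness is in hand, the remaining steps — the large-separation dispersive estimate and the summation over the $O(\mu_3^{-C})$ centers — are routine and parallel to Section \ref{Sec:FreqLoc}, yielding the bound $\mu_1^c$ in the limit $t\to\infty$.
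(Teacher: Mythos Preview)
Your overall architecture is right in spirit --- pair the forward and backward Duhamel representations against a spatial cutoff, use a perturbation argument to get local Strichartz smallness of $u$ away from the concentration points, and exploit dispersion for the rest --- and this matches the paper's strategy. But there is a genuine gap in your treatment of the ``large $|t'-t''|$'' region.

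You write that in this region ``the dispersive estimate (\ref{Eqn:Dispers}) (interpolated with the trivial $L^2$ bound, as in Section \ref{Sec:FreqLoc}) gives an integrable power of $|t'-t''|^{-1}$.'' This analogy with Section \ref{Sec:FreqLoc} fails: there the multiplier $Q_N$ is a Fourier projection and \emph{commutes} with $e^{it\bilap}$, so the two propagators collapse to $e^{i(t''-t')\bilap}$ and (\ref{Eqn:Dispers}) applies directly. Here the cutoff $\chi$ is a \emph{spatial} multiplier sandwiched between $e^{i(t_0-t')\bilap}$ and $e^{i(t_0-t'')\bilap}$; it does not commute with either, and there is no obvious way to extract a power of $|t'-t''|^{-1}$. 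The paper handles this by writing $\chi = 1 - (1-\chi)$: the ``$1$'' part collapses and gives the expected dispersive bound, but the compactly supported piece $1-\chi$ requires the kernel estimate (\ref{Eqn:EstK}), whose proof (Section \ref{Section:EstK}) is one of the two main technical novelties of the paper --- it involves factoring out a phase to define a modified fundamental solution, passing to bipolar coordinates, and a delicate stationary-phase analysis. Without something like (\ref{Eqn:EstK}) your large-separation estimate does not close.

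A secondary point: in the paper the local Strichartz smallness is not used to handle a ``small $|t'-t''|$'' region inside the bilinear integral. Rather, it is used \emph{before} the pairing, to shift the two Duhamel endpoints apart by $\mu_1^{-1}$ (replacing $v(t_0)$ by $e^{\mp i\mu_1^{-1}\bilap}v(t_0\pm\mu_1^{-1})$ up to small error). After this shift the bilinear integral runs over $t'\geq t_0+\mu_1^{-1}$ and $t''\leq t_0-\mu_1^{-1}$, so $|t'-t''|\geq 2\mu_1^{-1}$ throughout and there is no small-separation region to worry about. Your proposed organization may also work, but it would still run into the cutoff-between-propagators issue above.
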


\begin{rem}
Notice that the number of points where the mass concentrates depends on the parameter $\mu_{1}$. This is not totally consistent with the soliton
resolution conjecture, since, as $t \rightarrow \infty$, the number of solitons (i.e the spots where the mass concentrates) should not depend on $\mu_1$.
This is why we have called this proposition asymptotic ``partial'' spatial localization. Notice also that the points depend on $\mu_{1}$.
These constraints will be removed in Proposition \ref{prop:finalspat}.
\end{rem}

Let $t_{0}$ be large enough such that all the inequalities in this subsection are true. In the sequel, in order to avoid too much notation, we
forget $t_{0}$ and $\mu_{1}$ to set $D(x):=\inf_{1 \leq  j \leq J} |x-x_{j}(t_{0},\mu_{1})|$ and $x_{j}:=x_{j}(t_{0},\mu_{1})$.
Let $I=[t_{0} -\mu_{1}^{-1}, t_{0}+ \mu_{1}^{-1}]$. \\
Given $\mu>0$, let $\chi_{\mu}$ be a smooth function such that $\chi_{\mu}(x)=1$ if $D \leq \mu^{-1}$, $\chi_{\mu}(x)=0$ if $D \geq 2 \mu^{-1}$
and satisfies $| \partial^{\alpha} \chi_{\mu}(x)| \lesssim_{k} \mu^{k}$ if $|\alpha|=k$. This function can be obtained, for example, by convolution
of the characteristic function on $D \leq 1.1 \mu^{-1}$ with an approximate of the identity of size $0.1 \mu^{-1}$. \\
We first prove that the linear flow away from the points of concentration is small as time goes to infinity:

\begin{lem}
We have

\begin{equation}
\begin{array}{ll}
\overline{\lim}_{t_{0} \rightarrow \infty}  \| e^{i(t-t_{0}) \bilap} (1- \chi_{ \frac{\mu_{4}}{4}} ) u(t_{0}) \|_{X(I)} & \lesssim  \mu_{2}^{c}
\end{array}
\end{equation}
\label{Lem:LinearPart}
\end{lem}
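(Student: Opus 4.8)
\textbf{Proof plan for Lemma \ref{Lem:LinearPart}.}

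The plan is to split the data $(1-\chi_{\mu_{4}/4})u(t_{0})$ according to the decomposition \eqref{Eqn:Decomp}, writing $u(t_{0}) = e^{it_{0}\bilap}u_{+} + v(t_{0})$, and to treat the two resulting free evolutions separately over the short interval $I=[t_{0}-\mu_{1}^{-1},t_{0}+\mu_{1}^{-1}]$ of length $\sim \mu_{1}^{-1}$. For the radiative piece $e^{i(t-t_{0})\bilap}(1-\chi_{\mu_{4}/4})e^{it_{0}\bilap}u_{+} = e^{it\bilap}(1-\chi_{\mu_{4}/4})u_{+} + [\text{commutator}]$, I would first approximate $u_{+}$ in $H^{2}$ by a Schwartz function (up to an error $O_{H^{2}}(\mu_{2}^{C})$, which is admissible since the $X(I)$ norm of a free solution is controlled by the $H^{2}$ norm of its data via Sobolev and Strichartz \eqref{Eqn:Strich}, \eqref{Eqn:StrichGain}), and then use that the spatial cutoff $1-\chi_{\mu_{4}/4}$ is supported where $D(x)\geq \mu_{4}^{-1}/4$, i.e.\ far from the (finitely many, at most $J(\mu_{1})$) concentration points. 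Since $u_{+}$ itself need not be concentrated there, the key point is rather that the tail of the $H^{2}$ norm of any fixed Schwartz function outside a large ball around each $x_{j}$ is small; combined with the fact that the free flow over a time interval of bounded length $\mu_{1}^{-1}$ moves mass only a bounded distance (so $X(I)$ of a well-localized, well-concentrated-away free datum stays small), this handles the $u_{+}$ contribution.

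The main work is the contribution of the nonradiative piece $e^{i(t-t_{0})\bilap}(1-\chi_{\mu_{4}/4})v(t_{0})$. Here I would use the representation \eqref{Eqn:vtinf}: $v(t_{0})$ is the weak limit of $i\int_{t_{0}}^{T}e^{i(t_{0}-t')\bilap}F(u(t'))\,dt'$, so $e^{i(t-t_{0})\bilap}(1-\chi_{\mu_{4}/4})v(t_{0})$ is, in the weak-$H^{2}$ sense, $i\int_{t_{0}}^{\infty}e^{i(t-t')\bilap}(1-\chi_{\mu_{4}/4})F(u(t'))\,dt'$ — but this needs care because the spatial cutoff does not commute with the propagator. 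The natural move is to split the $t'$-integral into the near-diagonal part $|t'-t_{0}|\leq \mu_{1}^{-1}$ (equivalently $t'\in$ a slightly enlarged $I$) and the far part $|t'-t_{0}|>\mu_{1}^{-1}$. On the near part one uses the Strichartz estimate \eqref{Eqn:Strich}/\eqref{Eqn:StrichGain} together with the local estimates of Proposition \ref{Prop:LocalEst}; the spatial cutoff is benign here since over time $\lesssim \mu_{1}^{-1}$ the propagator's kernel is essentially localized, and the smallness comes either from the short time length $|I|$ (a positive power $\mu_{1}^{\alpha}$ as in \eqref{Eqn:BoundH2Local}) or from the fact that $v(t_{0})$ (equivalently the part of $F(u)$ contributing near $t_{0}$) has small mass away from the concentration points, which is exactly \eqref{Eqn:BoundLinfty} applied together with the $H^{2}$ bound. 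On the far part $|t'-t_{0}|>\mu_{1}^{-1}$, one exploits dispersion: the propagator $e^{i(t-t')\bilap}$ acting for time $|t-t'|\gtrsim |t'-t_{0}| - \mu_{1}^{-1}$ gains a factor $|t-t'|^{-n/4}$ per the dispersive estimate \eqref{Eqn:Dispers}, and after approximating $F(u(t'))$ suitably (and using $\|F(u(t'))\|_{L^{\tilde p}}\lesssim 1$ and $\|\nabla F(u(t'))\|_{L^{\tilde p}}\lesssim 1$ from \eqref{Eqn:BoundFu} and the following display) interpolated with the trivial $L^{2}$ bound, the $t'$-integral converges and is $\lesssim \mu_{1}^{-1}$-small (a negative power of the time gap); this is the same mechanism already used to estimate $X_{1,m}$ and $Z_{1}$ in Section \ref{Sec:FreqLoc}.

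The hardest part will be making the near-diagonal estimate genuinely small: one cannot gain from time length alone in every norm appearing in $X(I)$ (in particular the $L_{t}^{\infty}L_{x}^{Q}$ component carries no time factor), so the smallness must ultimately be extracted from the spatial separation — i.e.\ from the fact that $(1-\chi_{\mu_{4}/4})u(t_{0})$ is supported where the mass of $v$ is $\lesssim \mu_{3}^{c}$ by construction \eqref{Eqn:BoundLinfty}, and where $u_{+}$ contributes only a small Schwartz tail. Propagating this spatial smallness through the nonlinear Duhamel term over the interval $I$ requires a finite-speed-of-propagation-type argument for the biharmonic flow: the kernel of $e^{i\tau\bilap}$ for $|\tau|\lesssim\mu_{1}^{-1}$ is concentrated, up to rapidly decaying tails, in a ball of radius $\sim \mu_{1}^{-1/3}$ (or any fixed scale once $\mu_{1}$ is small), so the cutoff $1-\chi_{\mu_{4}/4}$ applied to the data keeps the solution, on the slightly shrunk region where a comparison cutoff like $1-\chi_{\mu_{5}}$ vanishes, small in $X(I)$; the error from the kernel tails is $O(\mu^{\gamma})$ for $\gamma$ arbitrarily large, exactly as in \eqref{Eqn:ExpProj}. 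I would organize this by inserting nested cutoffs $\chi_{\mu_{4}/4}, \chi_{\mu_{5}}, \chi_{\mu_{6}}$, estimating the commutators $[\chi,e^{i\tau\bilap}]$ by the kernel-tail bound, and closing a short-time Strichartz/continuity argument (as in the proof of Proposition \ref{Prop:LocalEst}) on the cutoff solution, with the final bound $\mu_{2}^{c}$ coming from the hierarchy $\mu_{1}\gg\mu_{2}\gg\cdots\gg\mu_{6}$.
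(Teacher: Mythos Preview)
Your plan has the right overall decomposition $u(t_0)=e^{it_0\bilap}u_++v(t_0)$, but both branches have genuine gaps.

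For the $u_+$ piece, your reasoning is off. The concentration points $x_j(t_0)$ are determined by $v(t_0)$ and have nothing to do with where a fixed Schwartz approximation $u_{\mu_2}\approx u_+$ is localized, so ``the tail of the $H^2$ norm of any fixed Schwartz function outside a large ball around each $x_j$'' is not small in general. The smallness here comes from \emph{dispersion at the large time $t_0$}: one shows $\|e^{it_0\bilap}u_{\mu_2}\|_{L^{\bar p}}\to 0$ for $\bar p>2$ via \eqref{Eqn:Dispers}, and then controls the $X(I)$ norm of $e^{i(t-t_0)\bilap}\chi_{\mu_4/4}\,e^{it_0\bilap}u_{\mu_2}$ by bounding the operator $e^{i(t-t_0)\bilap}\chi_{\mu_4/4}$ on $L^{\bar p}$ over the short window $I$ (this is what the paper does in Result~1). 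Your commutator identity $e^{i(t-t_0)\bilap}(1-\chi)e^{it_0\bilap}u_+=e^{it\bilap}(1-\chi)u_++[\text{commutator}]$ is also not right: the cutoff sits at time $t_0$, not time $0$.

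For the $v(t_0)$ piece, the Duhamel-based route has two problems. First, the far-time integral $\int_{|t'-t_0|>\mu_1^{-1}}e^{i(t_0-t')\bilap}F(u(t'))\,dt'$ only exists as a \emph{weak} limit (that is exactly the content of \eqref{Eqn:vtinf}); it does not converge in any $L^p$ or $H^2$ norm, so you cannot bound its contribution to $X(I)$ by a dispersive tail as you suggest. Second, your finite-speed claim for $e^{i\tau\bilap}$ is false without frequency localization: the biharmonic kernel only decays like $|x|^{-n/3}$ (see \eqref{Eqn:EstI}), not rapidly, so cutoffs at scale $\mu_4^{-1}$ do not separate under the raw propagator. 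The missing ingredient is Proposition~\ref{Prop:FreqLocal}: the paper first replaces $v(t_0)$ by $P_{\leq 10\mu_2^{-1}}v_{\mu_2^{-1}}(t_0)$ up to $O_{H^2}(\mu_2^c)$, then splits spatially with $\chi_{\mu_4}$. The piece $(1-\chi_{\mu_4})v_{\mu_2^{-1}}(t_0)$ is small by the pointwise bound \eqref{Eqn:BoundLinfty} (which is stated for the frequency-localized $v_N$, not $v$) interpolated with the $L^2$ bound; the piece $(1-\chi_{\mu_4/4})P_{\leq 10\mu_2^{-1}}(\chi_{\mu_4}v_{\mu_2^{-1}}(t_0))$ is small because the supports of $1-\chi_{\mu_4/4}$ and $\chi_{\mu_4}$ are separated by $\gtrsim\mu_4^{-1}$ while the kernel of $P_{\leq 10\mu_2^{-1}}$ is rapidly decaying at scale $\mu_2\ll\mu_4^{-1}$. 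Without inserting the frequency projector, neither \eqref{Eqn:BoundLinfty} nor the kernel-separation argument is available.
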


\begin{proof}
We use the decomposition (\ref{Eqn:Decomp}) at time $t_{0}$. By density of the Schwarz functions in $H^{2}$ we can find $u_{\mu_{2}} \in \mathcal{S}(\mathbb{R}^{n})$ such that

\begin{equation}
\begin{array}{ll}
u_{+} & = u_{\mu_{2}} +  O_{H^{2}}( \mu_{2}^{c}) \cdot
\end{array}
\nonumber
\end{equation}
Combining (\ref{Eqn:Strich}), (\ref{Eqn:StrichGain}) with Proposition \ref{Prop:FreqLocal} we see that

\begin{equation}
\begin{array}{ll}
e^{i(t-t_{0}) \bilap}(1- \chi_{ \frac{\mu_{4}}{4}} )u(t_{0}) & =  e^{i(t-t_{0}) \bilap} (1-\chi_{ \frac{\mu_{4}}{4}}) \; e^{i t_{0} \bilap} u_{\mu_{2}} \\
& \\
& +  e^{i(t-t_{0}) \bilap} (1- \chi_{\frac{\mu_{4}}{4}}  ) \; P_{\leq  10 \mu_{2}^{-1}}  v_{\mu_{2}^{-1}}(t_{0}) + O_{X(I)}(\mu_{2}^{c}) \\
& \\
& =   e^{i(t-t_{0}) \bilap} (1-\chi_{ \frac{\mu_{4}}{4}} ) \; e^{i t_{0} \bilap} u_{\mu_{2}} \\
& \\
& +  e^{i(t-t_{0}) \bilap} (1- \chi_{ \frac{\mu_{4}}{4}} )  \; P_{ \leq  10 \mu_{2}^{-1}} \left(
\chi_{ \mu_{4}} \; v_{\mu_{2}^{-1}}(t_{0}) \right) \\
& \\
&  +   e ^{i(t-t_{0}) \bilap} ( 1 - \chi_{ \frac{\mu_{4}}{4}} )  \; P_{ \leq 10 \mu_{2}^{-1}} \left( (1 - \chi_{\mu_{4}}) \;
v_{\mu_{2}^{-1}}(t_{0}) \right) +
O_{X(I)} (\mu_{2}^{c}) \\
\end{array}
\nonumber
\end{equation}
Next we show the following results:

\underline{Result 1}: We have

\begin{equation}
\begin{array}{ll}
\| e^{i(t-t_{0}) \bilap} (1- \chi_{\frac {\mu_{4}}{4}}) e^{i t_{0} \bilap} u_{\mu_{2}} \|_{X(I)} & \lesssim \mu_{2}
\end{array}
\end{equation}

\begin{proof}
From (\ref{Eqn:Dispers}) we see that it is enough to prove that

\begin{equation}
\begin{array}{ll}
\| e^{i(t-t_{0}) \bilap} \chi_{\frac{\mu_{4}}{4}} e^{i t_{0} \bilap} u_{\mu_{2}} \|_{X(I)} & \lesssim \mu_{2}
\end{array}
\label{Eqn:WeightEst}
\end{equation}
Let $g_{t_{0}}(x) := e^{i t_{0} \bilap} u_{\mu_{2}}(x)$. We write

\begin{equation}
\begin{array}{ll}
e^{i(t-t_{0}) \bilap} \chi_{\frac{\mu_{4}}{4}} g_{t_{0}} & =  e^{i(t-t_{0}) \bilap} P_{\leq  1} \chi_{ \frac{\mu_{4}}{4}} \; g_{t_{0}} +
\sum_{N > 1} e^{i(t-t_{0}) \bilap} \tilde{P}_{N} P_{N} \chi_{ \frac{\mu_{4}}{4}} \; g_{t_{0}}
\end{array}
\end{equation}
with $\tilde{P}_{N}  := P_{>2N} - P_{ \leq \frac{N}{2}} $. Let $\tilde{\phi}$ be such that
$\widehat{\tilde{P}_{N} f}(\xi) := \tilde{\phi} \left( \frac{\xi}{N} \right) \hat{f}(\xi) $.
Let $\bar{p}=r_{0},Q$. We have

\begin{itemize}

\item

\begin{equation}
\begin{array}{ll}
\| e^{i(t-t_{0}) \bilap} \tilde{P}_{N} f \|_{L^{\bar{p}}} & \lesssim N^{4n} |I|^{n} \| f \|_{L^{\bar{p}}}
\end{array}
\label{Eqn:BoundFundN}
\end{equation}
Indeed, the kernel $\tilde{K}_{N}$ of $e^{i(t-t_{0}) \bilap} \tilde{P}_{N}$ is

\begin{equation}
\begin{array}{ll}
\tilde{K}_{N}(x,y) & = N^{n} \int e^{i(t-t_{0}) N^{4} |\xi|^{4}} e^{iN(x-y) \cdot \xi} \tilde{\psi}(\xi) \, d \xi
\end{array}
\nonumber
\end{equation}
Stationary phase \cite{stein} yields

\begin{equation}
\begin{array}{ll}
\tilde{K}_{N}(x,y) & = \left\{
\begin{array}{l}
O(N^{n}), \, |x-y| \lesssim N^{3} |I| \\
O \left( \frac{N^{n}}{\langle N(x-y)\rangle^{n+1}}   \right), \, |x-y| \gg N^{3} |I|,
\end{array}
\right.
\end{array}
\nonumber
\end{equation}
The conclusion follows from Schur's lemma.

\item  $  \| P_{N} \chi_{ \frac{\mu_{4}}{4}} \langle D \rangle^{-100n} f  \|_{L^{p}} \lesssim \frac{(\mu_{3})^{-C}}{N^{C}} \|f \|_{L^{p}}$.
Indeed the kernel $K_{N}$ of this operator is

\begin{equation}
\begin{array}{ll}
K_{N}(x,y) & = \int \int \int \psi \left( \frac{\xi}{N} \right)

 \frac{ \chi_{\frac{ \mu_{4}}{4}} (z)} {\langle \eta \rangle^{100n}}
e^{i \left( \xi \cdot x - \eta \cdot y - (\xi - \eta) \cdot z \right)} \, d \eta \, d \xi \, dz
\end{array}
\nonumber
\end{equation}
We see from integration by part w.r.t  $\xi$ and $\eta$ of the phase that

\begin{equation}
\begin{array}{ll}
|K_{N}(x,y)| & \lesssim \frac{(\mu_{3})^{-C} N^{n} }{ |x - z|^{n+1}}, \; \text{and} \\
|K_{N}(x,y)| & \lesssim \frac{(\mu_{3})^{-C} N^{n} }{ |y - z|^{n+1}} \; \cdot
\end{array}
\nonumber
\end{equation}
We also have

\begin{equation}
\begin{array}{ll}
|K_{N}(x,y)| & \lesssim \frac{(\mu_{3})^{-C}}{N^{90n}} \cdot
\end{array}
\nonumber
\end{equation}
Indeed if $|\eta| \lesssim N$ then this follows by integration by parts of the phase w.r.t  $z$; if not we
bound pointwise $K_{N}$. The conclusion follows from Schur's lemma.

\item A straightforward modification of the proof of (\ref{Eqn:BoundFundN}) shows that \\
$\| e^{i(t-t_{0}) \bilap} P_{\leq 1} f \|_{L^{p}} \lesssim |I|^{n} \| f \|_{L^{p}}$.

\end{itemize}
Now, using these these operator norm bounds, summing over $N$, using the dispersive bound
$ \| \langle D \rangle^{100n} e^{it_{0} \bilap} u_{\mu_{2}} \|_{L^{p}}  \lesssim \frac{1}{t_{0}^{\frac{n}{4}
\left( 1 - \frac{2}{p} \right) }}
\| \langle D \rangle^{100n} u_{\mu_{2}} \|_{L^{p^{'}}}$ and H\"older in time, we see that (\ref{Eqn:WeightEst}) holds.

\end{proof}

\underline{Result 2}: We have

\begin{equation}
\begin{array}{ll}
\left\| e^{i(t-t_{0}) \bilap} (1- \chi_{ \frac{\mu_{4}}{4}})  P_{ \leq  10 \mu_{2}^{-1}} \left( ( 1- \chi_{ \mu_{4}}) v_{\mu_{2}^{-1}}(t_{0}) \right) \right\|_{X(I)} & \lesssim \mu_{2}^{c}
\end{array}
\nonumber
\end{equation}

\begin{proof}
It is enough to prove that for $\bar{p}=Q,r_{0}$ and for $l=0,1,2$
\begin{equation}
\begin{array}{ll}
\| D^{l} e^{i(t-t_{0}) \bilap} (1-\chi_{\frac{ \mu_{4}}{4}}) P_{ \leq 10 \mu_{2}^{-1} }  f \|_{L^{\bar{p}}} & \lesssim \mu_{2}^{-C} |I|^{n}
\| f \|_{L^{\bar{p}}}
\end{array}
\label{Eqn:Est}
\end{equation}
Indeed it is enough to combine (\ref{Eqn:Est})  with the interpolation inequality (for some $ 0< \theta <1$)

\begin{equation}
\begin{array}{ll}
\| (1- \chi_{\mu_{4}}) v_{\mu_{2}^{-1}}(t_{0}) \|_{L^{p}} & \lesssim \| (1- \chi_{\mu_{4}})  v_{\mu_{2}^{-1}}(t_{0}) \|^{\theta}_{L^{2}}
\| (1- \chi_{\mu_{4}})  v_{\mu_{2}^{-1}}(t_{0}) \|^{1-\theta}_{L^{\infty}} \\
& \lesssim \mu^{c}_{3},
\end{array}
\end{equation}
the last inequality following from (\ref{Eqn:BounduplusvH2}) and (\ref{Eqn:BoundLinfty}). Using the triangle inequality
we have to estimate two terms. We shall only prove the following estimate

\begin{equation}
\begin{array}{ll}
\| D^{l} e^{i(t-t_{0}) \bilap} \chi_{\mu_4} P_{\leq 10 \mu_{2}^{-1} }  f \|_{L^{\bar{p}}} & \lesssim \mu_{2}^{-C} |I|^{n} \| f \|_{L^{\bar{p}}},
\end{array}
\label{Eqn:Toprove}
\end{equation}
since the other estimate is easier to prove (and therefore left to the reader). We write

\begin{equation}
\begin{array}{ll}
D^{l} e^{i(t-t_{0}) \bilap} \chi_{\mu_4} P_{\leq 10 \mu_{2}^{-1}}  f  & =
D^{l} e^{i(t-t_{0}) \bilap} \;  \tilde{P}_{\leq 640 \mu_{2}^{-1}}
\chi_{\mu_4} \; P_{\leq  10 \mu_{2}^{-1} } f  \\
& \\
& +   \sum_{N>1,N \in 2^{\mathbb{N}}} D^{l} e^{i(t-t_{0}) \bilap} \; \tilde{P}_{640 N \mu_{2}^{-1}} \; P_{640 N \mu_{2}^{-1}}
\; \chi_{\mu_4} \; P_{\leq 10 \mu_{2}^{-1}} f \cdot
\end{array}
\nonumber
\end{equation}
A straightforward modification of the proof of (\ref{Eqn:BoundFundN}) shows that

\begin{equation}
\begin{array}{ll}
\| D^{l} e^{i(t-t_{0}) \triangle^{2}} \tilde{P}_{640 N \mu_{2}^{-1}} f \|_{L^{p}} & \lesssim N^{l+4n} \mu_{2}^{-(4n+l)} |I|^{n} \| f \|_{L^{p}} \\
\| D^{l} e^{i(t-t_{0}) \triangle^{2}} \tilde{P}_{\leq 640 \mu_{2}^{-1}} f \|_{L^{p}} & \lesssim \mu_{2}^{-(4n+l)} |I|^{n} \| f \|_{L^{p}}
\end{array}
\nonumber
\end{equation}
Therefore it remains to show that

\begin{equation}
\begin{array}{ll}
\| P_{640 N \mu_{2}^{-1}}  \chi_{\mu_4} P_{\leq  10 \mu_{2}^{-1}} f \|_{L^{p}} & \lesssim \frac{\mu_{2}^{C}}{N^{C}} \| f \|_{L^{p}} \cdot
\end{array}
\nonumber
\end{equation}
The kernel $K_{N}$ of $ P_{640 N \mu_{2}^{-1}} \chi_{\mu_4} P_{\leq  10 \mu_{2}^{-1}} $ is

\begin{equation}
\begin{array}{ll}
K_{N}(x,y) & = \int \int \int \psi \left( \frac{\xi}{ 640 N \mu_{2}^{-1}} \right) \chi_{\mu_4}(z)
\phi \left( \frac{\eta}{ 10 \mu_{2}^{-1}}  \right) e^{i(\xi \cdot x - \eta \cdot y - (\xi- \eta) \cdot z )} \, d \xi \, d \eta \, dz \cdot
\end{array}
\nonumber
\end{equation}
By integrating by parts the phase w.r.t  $\xi$, $\eta$ and $z$ we see that

\begin{equation}
\begin{array}{ll}
|K_{N}(x,y)| & \lesssim \frac{1}{ (N \mu_{2}^{-1})^{100n} \langle x - z \rangle^{100n} \langle y - z  \rangle^{100n}} \cdot
\end{array}
\nonumber
\end{equation}

The conclusion follows from the application of Schur's lemma. \\

\end{proof}

\underline{Result 3}: We have

\begin{equation}
\begin{array}{ll}
\|  e ^{i(t-t_{0}) \bilap} ( 1 - \chi_{\frac{\mu_4}{4}} ) P_{ \leq 10 \mu_{2}^{-1}} ( \chi_{\mu_4} \;
v_{\mu_{2}^{-1}}(t_{0})) \|_{X(I)} & \lesssim \mu_{2}^{c}
\end{array}
\nonumber
\end{equation}

\begin{proof}
We see  from (\ref{Eqn:Strich}) that it is enough to prove that, for $m \in \{0,1,2 \}$, we have

\begin{equation}
\begin{array}{ll}
\left\| T_{m} \left[  \left( (1-\chi_{\frac{\mu_4}{4}} ) P_{\leq 10 \mu_{2}^{-1}} ( \chi_{\mu_{4}} \;  v_{\mu_{2}^{-1}}(t_{0}) ) \right) \right]  \right\|_{L^{2}} & \lesssim \mu_{2}^{c}
\end{array}
\label{Eqn:Ineqm}
\end{equation}
for all $t \in I$. Here $T_{0}:= Id$, $T_{1}:= \nabla$, and $T_{2} := \triangle$. We shall prove (\ref{Eqn:Ineqm}) for $m=0$, since the other cases ($m=1,2$) can be easily derived from this case, using the Leibnitz rule. By Minkowski's inequality and (\ref{Eqn:BounduplusvH2}), we have

\begin{equation}
\begin{array}{l}
\| (1-\chi_{ \frac{\mu_{4}}{4}}) P_{\leq 10 \mu_{2}^{-1}} ( \chi_{\mu_{4}} \;  v_{\mu_{2}^{-1}}(t_{0})) \|_{L^{2}} \\
\lesssim   \left\| \int \chi_{\mu_{4}}(x-y) \;
 v_{\mu_{2}^{-1}}(t_{0},x-y) \; \mu^{-n}_{2} \check{\phi} \left( 10 \mu_{2}^{-1} y  \right) \, dy  \right\| _{ L^{2}(D \geq  4 \mu_{4}^{-1} )} \\
\lesssim \| v_{\mu_{2}^{-1}} \|_{L^{2}} \mu^{-n}_{2} \left\| \check{\phi} \left( 10 \mu^{-1}_{2} y  \right) \right\|_{L^{1} (|y| \gtrsim \mu_{4}^{-1}) } \\
\lesssim \mu_{2}^{c}
\end{array}
\nonumber
\end{equation}

\end{proof}

\end{proof}

Next we prove the following lemma:

\begin{lem}
We have

\begin{equation}
\begin{array}{ll}
\overline{\lim}_{t_{0} \rightarrow \infty} \| \mathbf{1}_{D \geq \mu_{5}^{-1}} u \|_{L_{t}^{q_{0}} L_{x}^{r_{0}} (I \times \mathbb{R}^{n}) } & \lesssim \mu_{2}^{c}
\end{array}
\label{Eqn:DecayOutConc}
\end{equation}

\end{lem}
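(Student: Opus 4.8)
The plan is to estimate the Strichartz norm of $u$ on the time interval $I=[t_0-\mu_1^{-1},t_0+\mu_1^{-1}]$, restricted to the spatial region $D(x)\geq\mu_5^{-1}$ which lies well inside the region where the data $(1-\chi_{\mu_4/4})u(t_0)$ is supported. First I would use the Duhamel formula for $u$ based at the time $t_0$, writing
\begin{equation}
\begin{array}{ll}
u(t) & = e^{i(t-t_0)\bilap} u(t_0) - i\int_{t_0}^{t} e^{i(t-t')\bilap} F(u(t'))\,dt' \cdot
\end{array}
\nonumber
\end{equation}
The key point is to split the data as $u(t_0)=\chi_{\mu_4/4}u(t_0)+(1-\chi_{\mu_4/4})u(t_0)$. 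The contribution of $(1-\chi_{\mu_4/4})u(t_0)$ to $\|u\|_{L^{q_0}_t L^{r_0}_x(I)}$ is already $\lesssim\mu_2^c$ by Lemma \ref{Lem:LinearPart}, since $X(I)$ controls the $L^{q_0}_t L^{r_0}_x$ norm. So the task reduces to controlling the two remaining contributions: the free evolution of the localized datum $\chi_{\mu_4/4}u(t_0)$ restricted to $D\geq\mu_5^{-1}$, and the full Duhamel (nonlinear) term restricted to $D\geq\mu_5^{-1}$.

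The second step is a perturbation/bootstrap argument to control $\|\mathbf{1}_{D\geq\mu_5^{-1}}u\|_{L^{q_0}_t L^{r_0}_x(I)}$ directly. I would apply the Strichartz estimate (\ref{Eqn:Strich}) together with the local estimate machinery of Proposition \ref{Prop:LocalEst}, treating $F(u)$ as in the proof of (\ref{Eqn:BoundL2Local}): on the short interval $I$ of length $2\mu_1^{-1}$ the nonlinear term is bounded by $|I|^\alpha\|u\|_{L^\infty_t L^Q_x(I)}^{p-1}\|u\|_{L^{q_0}_t L^{r_0}_x(I)}$, and since $|I|$ depends only on $\mu_1$ and the prefactor can be made small by choosing $\mu_1$ small enough (as $\|u\|_{L^\infty_t L^Q_x}\lesssim 1$ by (\ref{Eqn:SobH2}) and (\ref{Eqn:Boundu})), this part gets absorbed into the left-hand side of a continuity argument. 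The remaining driving term is the free evolution of the localized-near-the-spots datum $\chi_{\mu_4/4}u(t_0)$, evaluated in the far region $D\geq\mu_5^{-1}$: since the support of $\chi_{\mu_4/4}$ is in $D\leq 2\cdot\frac{4}{\mu_4}=8\mu_4^{-1}$ while the observation region is $D\geq\mu_5^{-1}$ with $\mu_5\ll\mu_4$, the spatial separation is at least $\sim\mu_5^{-1}$, so the kernel of $e^{i(t-t_0)\bilap}$ contributes strong off-diagonal decay. One uses the dispersive/kernel decay estimates from the proof of Lemma \ref{Lem:LinearPart} (the stationary-phase kernel bound $O(N^n\langle N(x-y)\rangle^{-n-1})$ away from the light cone and the frequency-localization gains from the $P_N\chi$ commutators), combined with $H^2$-boundedness of $u(t_0)$, to bound this contribution by $\mu_2^c$.

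The third step is simply to combine: by the triangle inequality,
\begin{equation}
\begin{array}{ll}
\| \mathbf{1}_{D\geq\mu_5^{-1}} u \|_{L^{q_0}_t L^{r_0}_x(I)} & \lesssim \|\mathbf{1}_{D\geq\mu_5^{-1}} e^{i(t-t_0)\bilap}\chi_{\mu_4/4}u(t_0)\|_{L^{q_0}_t L^{r_0}_x(I)} \\
& \quad + \|e^{i(t-t_0)\bilap}(1-\chi_{\mu_4/4})u(t_0)\|_{X(I)} + (\text{nonlinear term}) \cdot
\end{array}
\nonumber
\end{equation}
The first term on the right is $\lesssim\mu_2^c$ by the kernel-decay argument, the second by Lemma \ref{Lem:LinearPart}, and the nonlinear term is absorbed by the continuity argument. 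Taking $\overline{\lim}_{t_0\to\infty}$ preserves all of these (the $t_0\to\infty$ limsup is exactly what appears in Lemma \ref{Lem:LinearPart}), which yields (\ref{Eqn:DecayOutConc}). The main obstacle I expect is making the perturbation/continuity argument rigorous: one must verify that the implicit constant in the nonlinear bound, after pulling out $|I|^\alpha$ with $|I|\sim\mu_1^{-1}$, is still beaten by choosing the hierarchy $\mu_1\gg\mu_2\gg\dots$ appropriately, so that the bootstrap closes with final bound $\mu_2^c$ rather than something blowing up in $\mu_1$; and one must be careful that restricting to $\mathbf{1}_{D\geq\mu_5^{-1}}$ is compatible with the Strichartz estimate (which is cleanest when one instead estimates the full $u$ on $I$ via the decomposition and only uses the cutoff to localize the driving free term).
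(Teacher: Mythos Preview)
Your plan has two genuine gaps, both of which you partly anticipate in your final paragraph but do not resolve. First, the interval $I$ has length $2\mu_1^{-1}$, which is \emph{large}; the factor $|I|^{\alpha}\sim\mu_1^{-\alpha}$ in the nonlinear bound is therefore large, not small, and no choice of the hierarchy $\mu_1\gg\mu_2\gg\dots$ can make it small since it depends on $\mu_1$ alone. So the nonlinear term cannot be absorbed by a single continuity argument on all of $I$. Second, and more seriously, the sharp cutoff $\mathbf{1}_{D\ge\mu_5^{-1}}$ does not commute with the propagator $e^{i(t-t')\bilap}$, so there is no Strichartz estimate for $\mathbf{1}_{D\ge\mu_5^{-1}}u$ that lets you feed back $\|\mathbf{1}_{D\ge\mu_5^{-1}}u\|_{L^{q_0}_tL^{r_0}_x}$ from the Duhamel term. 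The portion of $F(u)$ supported on $D<\mu_5^{-1}$ propagates into the far region, and your decomposition gives no mechanism to show this contribution is small.

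The paper's proof fixes both issues by a weighted-equation argument. One first replaces $u$ by the solution $\tilde u$ with localized datum $\chi_{\mu_4/4}u(t_0)$; by Lemma~\ref{Lem:LinearPart} and the perturbation Proposition~\ref{prop:Perturb}, $\|u-\tilde u\|_{X(I)}\lesssim\mu_2^c$, so it suffices to control $\tilde u$. One then introduces a smooth weight $\omega$ with $\omega=\mu_2^{-c}$ on $D\ge\mu_5^{-1}$ and $\omega=1$ on $D\le\mu_4^{-1}$, computes the equation satisfied by $\omega\tilde u$, and applies Strichartz to $\omega\tilde u$ directly. The point is that $|\omega F(\tilde u)|=|\tilde u|^{p-1}|\omega\tilde u|$, so the nonlinear dual norm is bounded by $|J|^c\|\tilde u\|_{L^\infty_tL^Q_x}^{p-1}\|\omega\tilde u\|_{L^{q_0}_tL^{r_0}_x(J)}$, which \emph{does} feed back the weighted quantity; and the commutator terms $[\bilap,\omega]\tilde u$ are small because $\nabla\omega$ is supported where $\omega$ varies and has size $O(\mu_5)$. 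The large-$|I|$ problem is handled by iterating this estimate over subintervals $J\subset I$ of small size, using that $\|\omega\tilde u(t_0)\|_{L^2}\lesssim\|u(t_0)\|_{L^2}$ initially and that the errors stay $O(\mu_5^c)$ at each step. This yields $\|\omega\tilde u\|_{L^{q_0}_tL^{r_0}_x(I)}\lesssim_{\mu_1}1$, hence $\|\mathbf{1}_{D\ge\mu_5^{-1}}\tilde u\|_{L^{q_0}_tL^{r_0}_x(I)}\lesssim\mu_2^c$.
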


\begin{proof}

Let $\tilde{u}$ be the solution of (\ref{Eqn:BiSchrod}) with data $\tilde{u}(t_{0}):= \chi_{\frac{ \mu_{4}}{4}} u(t_{0})$. Then, by Lemma \ref{Lem:LinearPart}, Proposition \ref{prop:Perturb}, Remark \ref{rem:inhomhom}, (\ref{Eqn:SobH2}) and (\ref{Eqn:LocalEst}) we see that, if $t_{0}$ is large enough, then

\begin{equation}
\begin{array}{l}
\| u - \tilde{u} \|_{L_{t}^{\infty} L_{x}^{2}(I)} \lesssim_{|I|} 1, \\
\| u - \tilde{u} \|_{L_{t}^{q_{0}} W^{2,r_{0}}(I) \cap L_{t}^{\infty} L_{x}^{Q}(I) } \lesssim \mu^{c}_{2}, \\
\| \tilde{u} \|_{L_{t}^{q_{0}} W^{2,r_{0}} (I)}  \lesssim_{\mu_{1}} 1, \, and \\
\| \tilde{u} \|_{ L_{t}^{\infty} L_{x}^{Q} (I)}  \lesssim 1.
\end{array}
\label{Eqn:Boundtildeu}
\end{equation}
Let $\omega$ be equal to the convolution of an approximate of the identity of size $0.1 \mu_{4}$ and

\begin{equation}
\bar{\phi} \left( \frac{D}{0.9 \mu_{5}^{-1} -  (\mu_{4}^{-1} + 0.1 \mu_{5}^{-1})}
+ \frac{ 0.9 \mu_{5}^{-1} - 2 (\mu_{4}^{-1} + 0.1 \mu_{5}^{-1})}{ 0.9 \mu_{5}^{-1} - (\mu_{4}^{-1} + 0.1 \mu_{5}^{-1})} \right),
\nonumber
\end{equation}
(Here $\bar{\phi}: \mathbb{R}^{n} \rightarrow \mathbb{R}$ a smooth function such that
$\bar{\phi}(x)= \mu_{2}^{-c}$ if $|x| \geq 2$ and $\bar{\phi}(x) = 1$ if $|x| \leq 1$).
Observe that $\omega(x) = \mu_{2}^{-c}$ if $D \geq  \mu_{5}^{-1}$, $\omega(x) =1$ if $D \leq \mu_{4}^{-1}$,
$\| \omega \tilde{u}(t_{0}) \|_{L^{2}} \lesssim \| u(t_{0}) \|_{L^{2}} $.\\








A computation (using (\ref{Eqn:BiSchrod})) shows that

\begin{equation}
i \partial_{t} (\omega \tilde{u}) + \bilap (\omega \tilde{u}) =
\omega F(\tilde{u}) +  \bilap \omega \tilde{u} + 2 \nabla (\triangle w) \cdot \nabla \tilde{u}
+ 4 \nabla \cdot (\nabla w \triangle \tilde{u}) - 2 \triangle w \triangle \tilde{u}
+ 2 \triangle (\nabla w \cdot \nabla \tilde{u})
\nonumber
\end{equation}
Let $Y(J):=L_{t}^{q_{0}} L_{x}^{r_{0}} (J) \cap L_{t}^{\infty} L_{x}^{2} (J)$.  From (\ref{Eqn:Strich}) and (\ref{Eqn:StrichGain}) we see that
for any subinterval $ J=[a,b] \subset I$

\begin{equation}
\begin{array}{ll}
\| \omega \tilde{u} \|_{Y(J)} &  \lesssim  \| \omega \tilde{u}(a) \|_{L^{2}} +
\| \omega F(\tilde{u}) \|_{L_{t}^{2} L_{x}^{\frac{2n}{n+4}} (J)} +
\| \bilap \omega \tilde{u} \|_{L_{t}^{1} L_{x}^{2} (J)} +
\| \nabla (\triangle w) \cdot \nabla \tilde{u} \|_{L_{t}^{1}L_{x}^{2}(J)} \\
& + \| \triangle w  \triangle \tilde{u} \|_{L_{t}^{1} L_{x}^{2}(J)} +
\| \nabla (\nabla w \cdot \nabla \tilde{u}) \|_{L_{t}^{2} L_{x}^{\frac{2n}{n+2}}(J)} +
\| \nabla w \triangle \tilde{u} \|_{L_{t}^{2} L_{x}^{\frac{2n}{n+2}}(J)} \\
& \lesssim \| \omega \tilde{u}(a) \|_{L^{2}} + |J|^{c}  \| \omega \tilde{u} \|_{L_{t}^{q_{0}} L_{x}^{r_{0}} (J)}
\| \tilde{u} \|^{p-1}_{L_{t}^{\infty} L_{x}^{Q}(J)} + \mu_{5}^{c} \\
& \lesssim  \| \omega \tilde{u}(a) \|_{L^{2}} + |J|^{c} \| \omega \tilde{u} \|_{Y(J)} + \mu_{5}^{c}
\end{array}
\label{Eqn:Comput}
\end{equation}
Using (\ref{Eqn:Comput}) on time intervals of small size and iterating, we see that
$\| \omega \tilde{u} \|_{L_{t}^{q_{0}} L_{x}^{r_{0}} (I)} \lesssim_{\mu_{1}} 1$ and consequently  $ \| \mathbf{1}_{D \geq \mu_{5}^{-1}} \tilde{u} \|_{L_{t}^{q_{0}} L_{x}^{r_{0}} (I)}  \lesssim
\mu_{2}^{c} $. Hence (\ref{Eqn:DecayOutConc}) holds.

\end{proof}

This implies the decay of the $L^{2}$ norm of the inhomogeneous part of the solution far away from the $\bar{x}$ :

\begin{lem}
For every subinterval $I^{'} \subset I$, we have

\begin{equation}
\begin{array}{ll}
\| (1- \chi_{\mu_{5}^{2}}) \int_{I^{'}} e^{i(t_{0} - t^{'}) \bilap} F(u(t^{'})) \, dt^{'} \|_{L^{2}(\mathbb{R}^{n})} & \lesssim \mu^{c}_{2}
\end{array}
\end{equation}
\end{lem}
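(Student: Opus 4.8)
The plan is to split $F(u(t'))$ into a part supported near the concentration points and a part supported away from them, to treat the far part directly via the dual Strichartz estimate together with the decay bound (\ref{Eqn:DecayOutConc}), and to show that on the short interval $I$ (of length $\sim\mu_1^{-1}$) the biharmonic flow cannot transport the near part out to $\{D\geq\mu_5^{-2}\}$. Precisely, write $F(u(t')) = \chi_{\mu_5}F(u(t')) + (1-\chi_{\mu_5})F(u(t'))$, so that $\chi_{\mu_5}F(u(t'))$ is supported in $\{D\leq 2\mu_5^{-1}\}$ while $(1-\chi_{\mu_5})F(u(t'))$ is supported in $\{D\geq\mu_5^{-1}\}$. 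For the far part I would discard the cutoff $1-\chi_{\mu_5^2}$ (bounded on $L^2$), write the time integral as $e^{it_0\bilap}\int_{I'}e^{-it'\bilap}(1-\chi_{\mu_5})F(u(t'))\,dt'$, and apply Remark \ref{Rem:Dual} with the dual admissible pair $(\tilde{q},\tilde{r})=(2,\frac{2n}{n+4})$ (which is legitimate since $(2,\frac{2n}{n-4})$ is $B$-admissible); this reduces matters to bounding $\|(1-\chi_{\mu_5})F(u)\|_{L_t^2 L_x^{\frac{2n}{n+4}}(I)}$. By Hölder in space, using the identity $\frac{p-1}{Q}+\frac{1}{r_0}=\frac{n+4}{2n}$ (which follows from the definitions of $Q$ and $\tilde{r}_{0}$), this is $\lesssim |I|^{c}\,\|u\|_{L_t^\infty L_x^Q(I)}^{p-1}\,\|\mathbf{1}_{D\geq\mu_5^{-1}}u\|_{L_t^{q_0}L_x^{r_0}(I)}$, which by (\ref{Eqn:SobH2}), Remark \ref{rem:inhomhom}, (\ref{Eqn:DecayOutConc}) and the ordering $\mu_5\ll\mu_2\ll\mu_1$ is $\lesssim\mu_1^{-c}\mu_2^{c}\lesssim\mu_2^{c}$.

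For the near part I would decompose in frequency at $M_\ast$, the largest dyadic number $\leq c_0(\mu_1\mu_5^{-2})^{1/3}$ with $c_0$ a small absolute constant; note $M_\ast\gg 1$ because $\mu_5\ll\mu_1$. For the low-frequency piece $(1-\chi_{\mu_5^2})\int_{I'}e^{i(t_0-t')\bilap}P_{\leq M_\ast}\chi_{\mu_5}F(u(t'))\,dt'$, I would keep $\chi_{\mu_5}$ inside the operator: the kernel of $(1-\chi_{\mu_5^2})\,e^{i(t_0-t')\bilap}P_{\leq M_\ast}\,\chi_{\mu_5}$ is supported where $|x-y|\geq D(x)-D(y)\geq\mu_5^{-2}-2\mu_5^{-1}\geq\frac12\mu_5^{-2}$, and there the phase $\Phi(\xi)=(x-y)\cdot\xi+(t_0-t')|\xi|^4$ satisfies, on $\{|\xi|\lesssim M_\ast\}$, $|\nabla_\xi\Phi|\geq|x-y|-C|t_0-t'|M_\ast^3\geq\frac12|x-y|$ by the choice of $M_\ast$ and $|t_0-t'|\leq 2\mu_1^{-1}$; repeated non‑stationary phase then gives a kernel bound $O_N\!\left(M_\ast^n(M_\ast|x-y|)^{-N}\right)$. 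Combining this — via Young's inequality in the form $L^{\frac{n}{n-2}}\ast L^{\frac{2n}{n+4}}\hookrightarrow L^2$ — with the uniform bound $\|F(u)\|_{L^{\frac{2n}{n+4}}}\lesssim\|u\|_{H^2}^{p}\lesssim 1$, which holds precisely because $1+\frac{8}{n}<p<1+\frac{8}{n-4}$, and with $|I'|\leq 2\mu_1^{-1}$, this piece is $\lesssim_N M_\ast^{2}(M_\ast\mu_5^{-2})^{-N}\mu_1^{-1}$, hence $\lesssim\mu_2^{c}$ for $N$ large.

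For the high-frequency piece $(1-\chi_{\mu_5^2})\int_{I'}e^{i(t_0-t')\bilap}P_{>M_\ast}\chi_{\mu_5}F(u(t'))\,dt'$ I would instead drop the cutoff $1-\chi_{\mu_5^2}$, commute $P_{>M_\ast}$ past $e^{i(t_0-t')\bilap}$, apply Remark \ref{Rem:Dual} with the same dual pair $(2,\frac{2n}{n+4})$, and then use the frequency gain $\|P_{>M_\ast}g\|_{L^{\frac{2n}{n+4}}}\lesssim M_\ast^{-1}\|\nabla g\|_{L^{\frac{2n}{n+4}}}$ together with $\|\nabla(\chi_{\mu_5}F(u))\|_{L_t^2 L_x^{\frac{2n}{n+4}}(I)}\lesssim_{\mu_1}1$; the latter follows from $\|\nabla F(u)\|_{L^{\frac{2n}{n+4}}}\lesssim\|u\|_{L^Q}^{p-1}\|\nabla u\|_{L^{r_0}}$ (the same exponent identity), the local estimate (\ref{Eqn:LocalEst}) giving $\|\nabla u\|_{L_t^{q_0}L_x^{r_0}(I)}\lesssim_{\mu_1}1$, (\ref{Eqn:SobH2}) and Remark \ref{rem:inhomhom}, the term $(\nabla\chi_{\mu_5})F(u)$ carrying a harmless extra factor $\mu_5$. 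Thus this piece is $\lesssim_{\mu_1}M_\ast^{-1}\sim\mu_1^{-1/3}\mu_5^{2/3}\lesssim\mu_2^{c}$, and adding the three contributions gives the claim. The one point requiring genuine care is the choice of $M_\ast$, which must serve two opposing purposes: it has to be small enough ($M_\ast^3\ll\mu_5^{-2}/|I|$) that the low-frequency propagator kernel is negligible across the large spatial gap $\gtrsim\mu_5^{-2}$ between the support of $\chi_{\mu_5}F(u)$ and $\{D\geq\mu_5^{-2}\}$, yet the corresponding gain $M_\ast^{-1}$ on the high frequencies — where only one derivative can be placed on $F(u)$, since $F\in C^1$ only — must still beat the $\mu_1^{-C}$ losses from the local Strichartz estimates on $I$; both are accommodated by $M_\ast\sim(\mu_1\mu_5^{-2})^{1/3}$ and the hierarchy $\mu_5\ll\mu_2\ll\mu_1$.
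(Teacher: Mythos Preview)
Your proof is correct and follows essentially the same three--piece strategy as the paper: a far spatial part handled by the dual Strichartz estimate together with (\ref{Eqn:DecayOutConc}), a low--frequency near part treated by the non--stationary phase (``almost finite speed of propagation'') argument for the localized kernel $(1-\chi_{\mu_5^2})\,e^{i(t_0-t')\bilap}P_{\leq M}\,\chi_{\mu_5}$, and a high--frequency near part handled by trading one derivative for a factor $M^{-1}$ via $\|P_{>M}g\|\lesssim M^{-1}\|\nabla g\|$ and the local Strichartz bounds. The only substantive difference is the frequency threshold: the paper simply takes $M=\mu_2^{-1}$ (which works because $\mu_5$ can be chosen small enough that $\mu_1^{-1}\mu_2^{-3}\ll\mu_5^{-2}$, so the phase is still non--stationary), whereas you optimize and take $M_\ast\sim(\mu_1\mu_5^{-2})^{1/3}$; your choice is sharper but the extra care is not needed here, since the hierarchy $\mu_5\ll\mu_2\ll\mu_1$ already absorbs all losses with the simpler cutoff.
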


\begin{proof}
We write

\begin{equation}
\begin{array}{ll}
F(u(t^{'})) & = F(u(t^{'}) \chi_{\mu_{5}} + u(t^{'})(1- \chi_{\mu_{5}}) )  \\
& = F(u(t^{'})  \chi_{\mu_{5}} ) + O \left( \mathbf{1}_{D \geq   \mu_{5}^{-1}} |u(t^{'})|^{p} \right) \\
& = P_{\geq \mu_{2}^{-1}} F( u(t^{'})) \tilde{\chi}_{\mu_{5}} + P_{\leq \mu_{2}^{-1}} F( u(t^{'}))  \tilde{\chi}_{\mu_{5}}
+ O \left( \mathbf{1}_{ D \geq  \mu_{5}^{-1}} |u(t^{'})|^{p}  \right),
\end{array}
\nonumber
\end{equation}
with $\tilde{\chi}_{\mu_{5}}$ denoting a smooth function that behaves like $\chi_{\mu_{5}}$.\\
In order to control the high frequency term, we use the fact that we work in $H^{2}$ and so we can expect some gain. By (\ref{Eqn:Strich}), it
is enough to control the following term

\begin{equation}
\begin{array}{ll}
\| (1- \chi_{\mu_{5}^{2}}  ) P_{ \geq \mu^{-1}_{2}} F( u(t')) \tilde{\chi}_{\mu_{5}}  \|_{L_{t}^{2} L_{x}^{\frac{2n}{n+4}} (I')} & \lesssim  \mu_{2}
\| \nabla ( F(u(t^{'})) \tilde{\chi}_{\mu_{5}})  \|_{L_{t}^{2} L_{x}^{\frac{2n}{n+4}} (I') } \\
& \lesssim \mu_{2} \left( \| F(u(t^{'})) \|_{L_{t}^{2} L_{x}^{\frac{2n}{n+4}} (I')} +  \| \nabla F(u(t^{'})) \|_{L_{t}^{2} L_{x}^{\frac{2n}{n+4}} (I') } \right) \\
& \lesssim \mu_{2} \mu^{-\alpha}_{1} \| u \|^{p-1}_{L_{t}^{\infty} L_{x}^{Q} (I)}
( \| u \|_{L_{t}^{q_{0}} L_{x}^{r_{0}} (I^{'})} + \| \nabla u \|_{L_{t}^{q_{0}} L_{x}^{r_{0}} (I^{'}) } ) \\
& \lesssim \mu_{2}^{c},
\end{array}
\label{Eqn:L2Bound1}
\end{equation}
the last inequality following from (\ref{Eqn:SobH2}) (\ref{Eqn:LocalEst}), and Remark \ref{rem:inhomhom}.
In order to control the third term, we use (\ref{Eqn:DecayOutConc})

\begin{equation}
\begin{array}{ll}
\| \mathbf{1}_{D \geq  \mu_{5}^{-1}} |u(t^{'})|^{p}  \|_{L_{t}^{2} L_{x}^{\frac{2n}{n+4}} (I^{'})} & \lesssim \mu^{- C}_{1}  \| \mathbf{1}_{D \geq  \mu_{5}^{-1}} u(t^{'}) \|_{L_{t}^{q_{0}} L_{x}^{r_{0}} (I)}
\| u(t^{'}) \|^{p-1}_{L_{t}^{\infty} L_{x}^{Q} (I)} \\
& \lesssim \mu_{2}^{c}
\end{array}
\label{Eqn:L2Bound2}
\end{equation}
In order to control the second term we use the fact that the medium frequencies of the solution have (locally) an almost finite speed of propagation.
More precisely the kernel $K(x,y)$ of the operator $ (1- \chi_{\mu_{5}^{2}}) e^{i(t-t_{0}) \bilap} P_{\leq \mu_{2}^{-1}} \tilde{\chi}_{\mu_{5}} $ is

\begin{equation}
\begin{array}{ll}
K(x,y) & : = (1- \chi_{\mu_{5}^{2}} (x)) \mu_{2}^{n} \int e^{i ( (t-t_{0}) |\xi|^{4} + (x-y) \cdot \xi )}
\phi \left(\mu_{2} \xi \right)  \, d \xi \,  \tilde{\chi}_{\mu_{5}}(y)
\end{array}
\nonumber
\end{equation}
Now it is not difficult to see that if $(x,y) \in
\{1 - \chi_{\mu_{5}^{2}} >0 \} \times \{ \tilde{\chi}_{\mu_{5}} >0 \} $
then, $ |\nabla(\Psi(\xi))| \gtrsim |x-y| $, with
$\Psi(\xi):=(t-t_{0}) |\xi|^{4} + (x-y) \cdot \xi$. Therefore, by stationary phase, we see
that

\begin{equation}
\begin{array}{ll}
|K(x,y)| & \lesssim  \frac{\mu_{5}^{c}} {  \langle  x-y \rangle^{n+1}} \cdot  \\
\end{array}
\nonumber
\end{equation}
By Schur's lemma and by Minkowski inequality, we see that it is bounded from $L^{2}$ to $L^{2}$ and from
$L^{1}$ to $L^{2}$ (with norm $O( \mu_{5}^{c})$). By interpolation,

\begin{equation}
\begin{array}{ll}
\| (1- \chi_{\mu_{5}^{2}}) e^{i(t-t_{0}) \bilap} P_{\leq \mu^{-1}_{2}} \tilde{\chi}_{\mu_{5}}  \|_{L^{\tilde{p}} \rightarrow L^{2} } & \lesssim
\mu_{5}^{c}
\end{array}
\label{Eqn:L2Bound3}
\end{equation}
The conclusion then follows from (\ref{Eqn:BoundFu}) and Remark
\ref{Rem:Dual}.

\end{proof}

Next we prove a result very similar to Proposition \ref{prop:spat}:

\begin{lem}
We have
\begin{equation}
\begin{array}{ll}
\overline{\lim}_{t_{0} \rightarrow \infty} (1-\chi_{\mu_{5}^{3}}) v(t_{0}) & = O_{L^{2}} (\mu^{c}_{1})
\end{array}
\end{equation}
\label{lem:Decayv}
\end{lem}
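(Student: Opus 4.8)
The plan is to write the weighted $L^{2}$ norm as a bilinear form and feed the two representations of $v(t_{0})$ into its two slots, so that a time gap appears between the ``past'' data governed by (\ref{Eqn:vt0}) and the ``future'' data governed by (\ref{Eqn:vtinf}). First I would write
\begin{equation}
\| (1-\chi_{\mu_{5}^{3}}) v(t_{0}) \|_{L^{2}}^{2} = \langle (1-\chi_{\mu_{5}^{3}})^{2} v(t_{0}), v(t_{0}) \rangle_{L^{2}},
\nonumber
\end{equation}
and, for $T$ large, substitute (\ref{Eqn:vt0}) into the first slot and the weak limit (\ref{Eqn:vtinf}) into the second. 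In (\ref{Eqn:vt0}) I split $\int_{0}^{t_{0}} = \int_{0}^{t_{0}-\mu_{1}^{-1}} + \int_{t_{0}-\mu_{1}^{-1}}^{t_{0}}$ and in (\ref{Eqn:vtinf}) I split $\int_{t_{0}}^{T} = \int_{t_{0}}^{t_{0}+\mu_{1}^{-1}} + \int_{t_{0}+\mu_{1}^{-1}}^{T}$. Since $\| v(t_{0}) \|_{L^{2}} \lesssim 1$ by (\ref{Eqn:BounduplusvH2}) and, by Duhamel and conservation of mass, the $L^{2}$ norm of the future Duhamel integral is $\lesssim 1$ uniformly in $T$, and since $\mu_{2} \ll \mu_{1}$, every contribution of size $O(\mu_{2}^{c})$ is acceptable. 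The two inner time-pieces are Duhamel integrals over subintervals of $I = [t_{0}-\mu_{1}^{-1}, t_{0}+\mu_{1}^{-1}]$; because $(1-\chi_{\mu_{5}^{3}})$ is pointwise dominated by $(1-\chi_{\mu_{5}^{2}})$ on its support, the preceding lemma bounds their weighted $L^{2}$ norm by $\mu_{2}^{c}$, hence every cross term involving at least one of them is $O(\mu_{2}^{c})$.

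Next I would dispose of the free piece $e^{it_{0}\bilap}(u(0)-u_{+})$. Choose Schwartz $u_{\mu_{2}}$ with $u(0)-u_{+} = u_{\mu_{2}} + O_{H^{2}}(\mu_{2}^{c})$ (the error pairs to $O(\mu_{2}^{c})$) and write $(1-\chi_{\mu_{5}^{3}})^{2} = 1 - \chi_{\mu_{5}^{3}}(2-\chi_{\mu_{5}^{3}})$. The $\chi_{\mu_{5}^{3}}(2-\chi_{\mu_{5}^{3}})$ part multiplies $e^{it_{0}\bilap}u_{\mu_{2}}$ by a function supported in the fixed bounded set $\{ D \le 2\mu_{5}^{-3} \}$, and $\| \chi_{\mu_{5}^{3}}(2-\chi_{\mu_{5}^{3}}) e^{it_{0}\bilap}u_{\mu_{2}} \|_{L^{2}} \lesssim \| e^{it_{0}\bilap}u_{\mu_{2}} \|_{L^{\infty}} \to 0$ as $t_{0} \to \infty$ by (\ref{Eqn:Dispers}); paired against the future integral this contributes a quantity tending to $0$. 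For the ``$1$'' part, pairing against $\int_{t_{0}+\mu_{1}^{-1}}^{T} e^{i(t_{0}-t'')\bilap} F(u(t'')) \, dt''$ and moving the free propagator onto the Schwartz factor gives the integrand $\langle e^{it''\bilap}u_{\mu_{2}}, F(u(t'')) \rangle$, which I bound by $\| e^{it''\bilap}u_{\mu_{2}} \|_{L^{\tilde{p}'}} \| F(u(t'')) \|_{L^{\tilde{p}}} \lesssim (t'')^{-\frac{n}{4}(1-\frac{2}{\tilde{p}'})}$ by interpolating (\ref{Eqn:Dispers}) with conservation of $\| u_{\mu_{2}} \|_{L^{2}}$ and by (\ref{Eqn:BoundFu}); since $\frac{n}{4}(1-\frac{2}{\tilde{p}'}) > 1$ for $n \ge 5$ (it equals $n/4$ when $p \ge 2$, and $\frac{n}{4}(p-1) > 2$ when $p < 2$ by mass-supercriticality), the $t''$-integral over $[t_{0}+\mu_{1}^{-1}, \infty)$ tends to $0$ with $t_{0}$, uniformly in $T$.

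The main term is the pairing of the far past $w_{2} := -i\int_{0}^{t_{0}-\mu_{1}^{-1}} e^{i(t_{0}-t')\bilap} F(u(t')) \, dt'$ against the far future. Expanding as a double integral over $(t',t'') \in [0,t_{0}-\mu_{1}^{-1}] \times [t_{0}+\mu_{1}^{-1},T]$, one has the time gap $t''-t' \ge 2\mu_{1}^{-1}$; bounding the spatial cutoffs crudely and moving a propagator leaves an integrand $\lesssim |t'-t''|^{-\frac{n}{4}(1-\frac{2}{\tilde{p}'})}$, via interpolation of (\ref{Eqn:Dispers}) with the $L^{2}$ estimate and (\ref{Eqn:BoundFu}). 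If $p < 2$ this exponent is $\frac{n}{4}(p-1) > 2$, so the double integral is $\lesssim \mu_{1}^{\frac{n}{4}(p-1)-2} \lesssim \mu_{1}^{c}$ uniformly in $T$; the same holds whenever $n > 8$. In the remaining range ($p \ge 2$, $5 \le n \le 8$) the bare decay $|t'-t''|^{-n/4}$ is not integrable over the far regions, and I would instead adapt the mechanism of Section \ref{Sec:FreqLoc}: first use (\ref{Eqn:HighFreqBound})--(\ref{Eqn:LowFreqBound}) to replace $v(t_{0})$ by $v_{N}(t_{0})$ with $N \sim \mu_{2}^{-1}$, then Paley--Littlewood decompose $F(u(t'))$ and $F(u(t''))$, using on each dyadic block $P_{K}$ with $K \gtrsim 1$ the improved dispersive estimate (\ref{Eqn:DispImpr}) (which contributes the integrable and $K$-summable factor $(K^{4}|t'-t''|)^{-n/2}$ for $n \ge 5$) and, on the complementary short-gap/low-frequency region, the local Strichartz estimate (\ref{Eqn:Paus}) together with Proposition \ref{Prop:LocalEst}; this yields the same $O(\mu_{1}^{c})$ bound. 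Collecting everything, $\| (1-\chi_{\mu_{5}^{3}}) v(t_{0}) \|_{L^{2}}^{2} \lesssim \mu_{1}^{c}$ up to a quantity tending to $0$ as $t_{0} \to \infty$, and taking $\overline{\lim}_{t_{0}\to\infty}$ finishes the proof.

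The hard part is this last step: the two Duhamel tails are non-local in time, and the time gap only furnishes dispersive decay $|t'-t''|^{-n/4}$ when $p \ge 2$, which fails to be integrable over the far regions precisely in dimensions $5 \le n \le 8$. Overcoming this requires importing the high-frequency dispersive improvement (\ref{Eqn:DispImpr}) from Section \ref{Sec:FreqLoc}, which is exactly the ingredient that (per the footnote there) pushes the whole argument down to $n = 5$; the bookkeeping of carrying the spatial cutoffs $(1-\chi_{\mu_{5}^{3}})^{2}$ through the propagators while summing the Paley--Littlewood blocks is the most delicate point.
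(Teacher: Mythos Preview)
Your plan captures the overall architecture --- feed the past representation (\ref{Eqn:vt0}) and the future representation (\ref{Eqn:vtinf}) into the two slots of the $L^{2}$ bilinear form, peel off the near-$t_{0}$ pieces via the preceding lemma, and exploit the time gap $|t'-t''|\ge 2\mu_{1}^{-1}$ on the remaining double integral --- and this is indeed how the paper proceeds. But the crucial step in your treatment of the main term does not work as written, and the repair you propose is not the one the paper uses.

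You assert that for the far-past/far-future double integral one may ``bound the spatial cutoffs crudely and move a propagator'' to reach an integrand $\lesssim |t'-t''|^{-\frac{n}{4}(1-2/\tilde p')}$. This is impossible as stated: the weight $(1-\chi_{\mu_{5}^{3}})^{2}$ sits \emph{between} the two propagators $e^{i(t_{0}-t')\bilap}$ and $e^{i(t_{0}-t'')\bilap}$, so they cannot be recombined into $e^{i(t''-t')\bilap}$. Dropping the weight via $|(1-\chi)^{2}|\le 1$ and applying H\"older in $x$ separates the factors as $\|e^{i(t_{0}-t')\bilap}F(u(t'))\|_{L^{q}}\,\|e^{i(t_{0}-t'')\bilap}F(u(t''))\|_{L^{q'}}$; dispersion acts on only one factor, while the other lands in $L^{\tilde p}$ where the free evolution is unbounded. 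You therefore never reach the joint decay in $|t'-t''|$. (You yourself apply the split $1-\chi(2-\chi)$ to the free piece $e^{it_{0}\bilap}u_{\mu_{2}}$; exactly the same split is needed here, and only its ``$1$'' part allows the propagators to combine.)

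The paper's route is precisely the decomposition $\tilde\chi_{\mu_{5}^{3}}^{2}=1-(1-\tilde\chi_{\mu_{5}^{3}}^{2})$. The ``$1$'' contribution is your $A$-type term, where $e^{i(t''-t')\bilap}$ appears and (\ref{Eqn:Dispers}) applies. The compactly supported remainder $1-\tilde\chi_{\mu_{5}^{3}}^{2}$ is the genuine obstacle: the propagators cannot be merged, and the paper rewrites the pairing as an $x$-integral of the product of two biharmonic fundamental solutions against the compact cutoff, see (\ref{Eqn:DefK}). Proving the bound $|K|\lesssim |t'-t''|^{-c}$ in (\ref{Eqn:EstK}) is the content of Section~\ref{Section:EstK}: it requires the modified fundamental solution $\tilde I$ of (\ref{Eqn:DeftildeI}), the weighted derivative estimates (\ref{Eqn:EsttildeI}), and a delicate bipolar-coordinate stationary-phase analysis. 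This kernel estimate is one of the two main technical novelties announced in the introduction, and your proposal bypasses it entirely.

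Consequently your diagnosis of the ``hard part'' is off target: the real obstruction is the spatial cutoff lodged between the two free evolutions, not the time-integrability of $|t'-t''|^{-n/4}$ for $5\le n\le 8$. Your proposed patch via (\ref{Eqn:DispImpr}) from Section~\ref{Sec:FreqLoc} was run there for an $H^{2}$ pairing with no spatial weight; the extra powers of $K$ supplied by the $H^{2}$ inner product were what made the dyadic sum converge, and they are absent in the bare $L^{2}$ pairing you face here --- so even for the cutoff-free ``$A$'' term the transplant is not routine, and for the compactly supported ``$B$'' term the frequency projections do not commute with the cutoff at all.
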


\begin{proof}
Let $\tilde{\chi}_{\mu_{5}^{3}}:= 1 - \chi_{\mu_{5}^{3}}$. The local-in-space Duhamel bound that is proved in (\ref{Eqn:DecayOutConc}) allows to limit the interaction between (\ref{Eqn:vt0}) and (\ref{Eqn:vtinf}) as we shall see. Indeed, using Duhamel formula and (\ref{Eqn:DecayOutConc}) we see that

\begin{equation}
\begin{array}{ll}
\tilde{\chi}_{\mu_{5}^{3}} v(t_{0}) & = \tilde{\chi}_{\mu_{5}^{3}} e^{-i \mu^{-1}_{1} \bilap}
v(t_{0} + \mu^{-1}_{1}) + O_{L^{2}} (\mu^{c}_{2}) \\
\tilde{\chi}_{\mu_{5}^{3}} v(t_{0}) & = \tilde{\chi}_{\mu_{5}^{3}} e^{i \mu^{-1}_{1} \bilap} v(t_{0} - \mu^{-1}_{1}) + O_{L^{2}} (\mu^{c}_{2})
\end{array}
\nonumber
\end{equation}
We compute

\begin{equation}
\begin{array}{ll}
\| \tilde{\chi}_{\mu_{5}^{3}} v(t_{0}) \|^{2}_{L^{2}} & = < \tilde{\chi}_{\mu_{5}^{3}} e^{-i \mu^{-1}_{1} \bilap} v(t_{0} + \mu^{-1}_{1}),
\tilde{\chi}_{\mu_{5}^{3}} e^{i \mu^{-1}_{1} \bilap} v(t_{0}- \mu^{-1}_{1}) >_{L^{2}}
+ O_{L^{2}}(\mu^{c}_{2}) \\
& = < e^{-i \mu^{-1}_{1} \bilap} v(t_{0} + \mu^{-1}_{1}), \tilde{\chi}^{2}_{\mu_{5}^{3}} e^{-i \mu^{-1}_{1} \bilap} v(t_{0} - \mu^{-1}_{1}) >_{L^{2}} + O_{L^{2}}(\mu^{c}_{2})
\end{array}
\nonumber
\end{equation}
We will only deal with the case $t_{0}>0$. By applying (\ref{Eqn:vt0}) for $v( t_{0}- \mu^{-1}_{1})$ (using the approximation $u(0)-u_{+} = \psi + O_{L^{2}} (\mu_{2})$ with $\psi \in \mathcal{S}(\mathbb{R}^{n})$, (\ref{Eqn:vtinf}) for $v (t_{0} + \mu^{-1}_{1})$, we see that is it is enough to prove that

\begin{equation}
\begin{array}{ll}
\langle e^{-i \mu^{-1}_{1} \bilap} v(t_{0}+ \mu^{-1}_{1}),  (1- \tilde{\chi}^{2}_{\mu_{5}^{3}}) e^{i t_{0} \bilap} \psi \rangle_{L^{2}} & \lesssim \mu^{c}_{1},
\end{array}
\label{Eqn:FirstEst}
\end{equation}

\begin{equation}
\begin{array}{ll}
\left| \int_{t_{0} + \mu^{-1}_{1}}^{\infty} \langle e^{i(t_{0} - t^{'}) \bilap} F(u(t^{'})), \, e^{i t_{0} \bilap} \psi \rangle_{L^{2}} \, dt^{'} \right|
& \lesssim \mu^{c}_{1}
\end{array}
\label{Eqn:SecondEst}
\end{equation}
and

\begin{equation}
\begin{array}{ll}
\int_{t_{0}+ \mu^{-1}_{1}}^{\infty} \int_{0}^{t_{0} - \mu^{-1}_{1}} | < e^{i(t_{0} - t^{'}) \bilap} F(u(t^{'})), \, \tilde{\chi}^{2}_{\mu_{5}^{3}}
e^{i (t_{0} - t^{''}) \bilap} F(u(t^{''})) >_{L^{2}} |  \, dt^{'} \, dt^{''} & \lesssim \mu^{c}_{1}
\end{array}
\label{Eqn:ThirdEst}
\end{equation}
Now we prove (\ref{Eqn:FirstEst}). By \cite{artzi}, we see that the kernel $K(x,y)$ of
$ (1- \tilde{\chi}^{2}_{\mu_{5}^{3}})  e^{i t_{0} \bilap} \langle y \rangle^{-100n}  $ satisfies

\begin{equation}
\begin{array}{ll}
|K(x,y)| & \lesssim \frac{1- \tilde{\chi}^{2}_{\mu_{5}^{3}}}{ t_{0}^{\frac{n}{4}} \langle y \rangle^{100n} },
\end{array}
\nonumber
\end{equation}
and, by Shur's lemma combined with the high regularity of $\psi$ we see that $ \| (1- \tilde{\chi}^{2}_{\mu_{3}^{3}}) e^{i t_{0} \bilap} \psi  \|_{L^{2}}
=O(\mu^{c}_{1})$ for $t_{0} \gg 1$. Combining this inequality with (\ref{Eqn:BounduplusvH2}) we see that (\ref{Eqn:FirstEst}) holds.

Next we prove (\ref{Eqn:SecondEst}). We have

\begin{equation}
\begin{array}{ll}
| \langle e^{i(t_{0}-t^{'}) \bilap} F(u(t^{'})), \, e^{it_{0} \bilap}  \psi \rangle |_{L^{2}} &
\lesssim \|  e^{-it' \bilap} F(u(t^{'}))  \|_{L^{\tilde{p}^{'}}}  \| \psi \|_{L^{\tilde{p}}} \\
& \lesssim \frac{1}{|t^{'}|^{\frac{n}{4} \left( 1 -\frac{2}{\tilde{p}'} \right)  }},
\end{array}
\label{Eqn:Ineq0}
\end{equation}
by (\ref{Eqn:BoundFu}). Hence (\ref{Eqn:SecondEst}) holds.

(\ref{Eqn:ThirdEst}) is more difficult to establish. We write

\begin{equation}
\begin{array}{l}
\langle e^{i(t_{0} -t^{'}) \bilap} F(u(t^{'})), \,  \tilde{\chi}^{2}_{\mu_{5}^{3}} e^{i(t_{0} - t^{''}) \bilap} F(u(t^{''}))  \rangle_{L^{2}} \\
=  \langle e^{i(t_{0}-t^{'}) \bilap} F(u(t^{'})), \,  e^{i(t_{0} - t^{''}) \bilap} F(u(t^{''}))  \rangle_{L^{2}}  \\
 - \langle e^{i(t_{0} -t^{'}) \bilap} F(u(t^{'})), \, (1- \tilde{\chi}^{2}_{\mu_{5}^{3}}) e^{i(t_{0} -t^{''}) \bilap} F(u(t'')) \rangle_{L^{2}} \\
= A + B
\end{array}
\nonumber
\end{equation}
so that we can use the fact that  $ 1- \tilde{\chi}^{2}_{\mu_{5}^{3}} $ is a compactly and nice decaying function. $A$ is
treated in a similar way to (\ref{Eqn:Ineq0}):

\begin{equation}
\begin{array}{ll}
A & \lesssim \frac{1}{|t^{'} - t^{''}|^{\frac{n}{4} \left( 1 - \frac{2}{\tilde{p}^{'}} \right) }} \cdot \\
\end{array}
\nonumber
\end{equation}
We see that $B$ can be written as $B = \int K(t',t'',t_{0},y,z) F(u(t',y)) \bar{F}(u(t'',z)) \, dy  \, dz $ with
$K$ the kernel defined by

\begin{equation}
\begin{array}{ll}
K(t',t'',t_{0},y,z) & := \frac{1}{(t_{0} -t')^{\frac{n}{4}}} \frac{1}{(t'' -t_{0})^{\frac{n}{4}}} \int
\left( \int e^{ i |\xi|^{4}} e^{ i \xi \cdot \frac{x-y}{ (t_{0} -t^{'})^{\frac{1}{4}}} } \, d \xi \right)
\left( \int e^{ i |\xi|^{4}}  e^{i \xi \cdot \frac{x-z}{ (t^{''} - t_{0})^{\frac{1}{4}}}  } \, d \xi \right) ( 1-
\tilde{\chi}^{2}_{\mu_{5}^{3}}) \cdot
\end{array}
\label{Eqn:DefK}
\end{equation}
Let $c$ be a small positive constant that is allowed to change from one line to another. We claim that
for all $(y,z) \in \mathbb{R}^{n} \times \mathbb{R}^{n}$ such that $y \neq z$ the following kernel estimate holds

\begin{equation}
\begin{array}{ll}
| K(t',t^{''},t_{0},y,z) & \lesssim \frac{1}{|t'-t''|^{c}}:
\end{array}
\label{Eqn:EstK}
\end{equation}
this estimate is delicate to prove and we postpone it to Section \ref{Section:EstK}. Hence

\begin{equation}
\begin{array}{ll}
| \langle e^{i(t_{0}-t') \triangle^{2}} F(u(t')), (1- \tilde{\chi}^{2}_{\mu_{5}^{3}})  e^{i (t_{0} - t'') \triangle^{2}} F(u(t'')) \rangle  | & \lesssim
\frac{1}{|t' - t''|^{c}} \| F(u(t')) \|_{L^{1}}  \| F(u(t'')) \|_{L^{1}}
\end{array}
\label{Eqn:EstL1L1}
\end{equation}

\begin{rem}
We notice that if we were to estimate $B$ by passing to the Fourier domain
then the strategy would be doomed to fail since the Fourier transform of
$ 1- \tilde{\chi}_{\mu_{5}^{3}} $ would depend on the number of points of concentration
$J:=J(\mu_{3})$ that can be really large compare with $|t^{''} - t'|$.  Hence it is
necessary to work in the spatial domain.
\end{rem}

\begin{rem}
Observe that if $\tilde{\chi}_{\mu_{5}^{3}}$ were equal to zero then, by
(\ref{Eqn:Dispers}), we would have found

\EQQARR{
\left| \langle \; e^{i(t_0 -t') \triangle^{2}} F(u(t')), e^{i(t_0 - t^{''}) \triangle^{2}} F(u(t''))  \rangle \right|
& = \left| \langle \; e^{i(t'' - t') \triangle^{2}} F(u(t')), F(u(t''))  \rangle  \right| \\
& \lesssim \frac{1}{|t^{''} - t'|^{\frac{n}{4}}} \| F(u(t')) \|_{L^{1}} \| F(u(t'')) \|_{L^{1}} \cdot
}
It is an open problem to know what the best value of $c$ is in (\ref{Eqn:EstL1L1}). Maybe it is $c = \frac{n}{4}$.

\end{rem}
We also have

\begin{equation}
\begin{array}{ll}
| \langle e^{i(t_{0}-t') \triangle^{2}} F(u(t')), (1- \tilde{\chi}_{\mu_{5}^{3}}^{2})  e^{i (t_{0} - t'') \triangle^{2}} F(u(t''))  \rangle  | & \lesssim
\| F(u(t')) \|_{L^{2}}  \| F(u(t'')) \|_{L^{2}} \cdot
\end{array}
\nonumber
\end{equation}
Hence by interpolation

\begin{equation}
\begin{array}{ll}
| \langle e^{i(t_{0}-t') \triangle^{2}} F(u(t')), (1 - \tilde{\chi}_{\mu_{5}^{3}}^{2} )  e^{i (t_{0} - t'') \triangle^{2}} F(u(t'')) \rangle  | & \lesssim
\frac{1}{|t' - t''|^{c \left( 1 - \frac{2}{\tilde{p}^{'}} \right) } } \| F(u(t')) \|_{L^{\tilde{p}}} \| F(u(t'')) \|_{L^{\tilde{p}}} \\
& \lesssim \mu_{1}^{c},
\end{array}
\nonumber
\end{equation}
where we used  (\ref{Eqn:BoundFu}) at the last line.





\end{proof}

\subsection{Final asymptotic spatial localization}
\label{subsec:finalspatial}

In this subsection we prove the final asymptotic spatial localization.  We prove the $L^{2}$ decay of the nonradiative part of the
solution outside a neighborhood of points $x_{j}:=x_{j}(t)$  such that their number does only depend on time $t$.

Let $1 \gg \mu_{0} \gg \mu_{1}$. Let $\mu_{2}$,  $ \mu_{3} $ and $\mu_{4}$ be constants chosen such that

\begin{itemize}
\item $\mu_{1} \gg \mu_{2} \gg \mu_{3} \gg \mu_{4} $
\item all the inequalities in this section are true
\item $\mu_{4}$ is a nondecreasing function of $\mu_{1}$
\end{itemize}
Let also $c$,$C$ denote a small, large constant whose value can change from one line to the other line. We prove the following proposition:

\begin{prop}{\textbf{`` Final asymptotic spatial localization ''}}
Given $\bar{\mu}_{1}>0$ on can find $\tilde{\mu}_{2}:= \tilde{\mu}_{2}(\bar{\mu}_{1}) >0$, $J \geq 0$
and $x_{1}(t)$, ... $x_{J}(t)$  such that

\begin{equation}
\begin{array}{ll}
\overline{\lim}_{t \rightarrow \infty} \int_{\inf_{1 \leq j \leq J} |x-x_{j}(t)| \geq \tilde{\mu}^{-1}_{2}} |v(t,x)|^{2} \, dx & \leq \bar{\mu}^{2}_{1}
\end{array}
\label{Eqn:FinalSpatProve}
\end{equation}
\label{prop:finalspat}
\end{prop}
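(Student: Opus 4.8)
The plan is to remove from the partial localization of Proposition \ref{prop:spat} the dependence of the \emph{number} of concentration points and of the localization radius on the internal parameters $\mu_2,\dots,\mu_6$, leaving only a dependence on the target accuracy $\bar\mu_1$. The mechanism will be a mass-budget argument: the total mass $\|v(t)\|_{L^2}^2\lesssim 1$ is fixed once and for all by (\ref{Eqn:BounduplusvH2}), and I will arrange that each point kept in the list consumes a definite portion of that mass whose size is controlled from below purely in terms of $\bar\mu_1$; the number of such points is then $\lesssim_{\bar\mu_1}1$, and the mass not accounted for is pushed below $\bar\mu_1^2$ by the frequency localization together with the machinery already developed in this section.

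Concretely I would fix $\mu_0$, and with it the chain $\mu_1\gg\mu_2\gg\mu_3\gg\mu_4$, as functions of $\bar\mu_1$ small enough that every error term ($\mu_0^{c}$, $\mu_1^{2\eta}$, and the like) produced below is at most $\frac{1}{4}\bar\mu_1^2$, where $\eta$ denotes the frequency-localization exponent furnished by Proposition \ref{Prop:FreqLocal}. Then I would rerun the inductive point construction of this section, but with the retention threshold raised to the coarse level $|v_{\mu_1^{-1}}(t,x)|\ge\mu_0^{c}$, where $v_{\mu_1^{-1}}:=P_{\mu_1\le\cdot\le\mu_1^{-1}}v$, while enforcing the separation of the retained points at scale $\sim\mu_4^{-1}$. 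The band-limited estimate (\ref{Eqn:ExpProj}) then forces each retained point $x_j(t)$ to carry mass $\gtrsim\mu_0^{C}$ inside pairwise disjoint balls, so the mass bound gives $J\le\mu_0^{-C}=J(\bar\mu_1)$, a count that no longer depends on the fine parameters. By maximality $|v_{\mu_1^{-1}}(t,x)|<\mu_0^{c}$ outside a $\frac{1}{2}\mu_4^{-1}$-neighborhood of $\{x_j(t)\}$, and since $\|v-v_{\mu_1^{-1}}\|_{L^2}^2\le\mu_1^{2\eta}$ by Proposition \ref{Prop:FreqLocal}, it remains only to bound the $L^2$-mass of the band-limited piece $v_{\mu_1^{-1}}$ outside that neighborhood.

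That last bound I would obtain by repeating, with the evident modifications, the three ingredients used to prove Proposition \ref{prop:spat}: the smallness of the free evolution far from the points (Lemma \ref{Lem:LinearPart}), the perturbative smallness of a local Strichartz norm of $u$ away from the points, and the bilinear interaction estimate resting on the kernel bound (\ref{Eqn:EstK}). After replacing the thresholds $\mu_3^{c}$ occurring there by $\mu_0^{c}$, these produce an escaping mass $\lesssim\mu_0^{c}\le\frac{1}{4}\bar\mu_1^2$, which combined with the $\mu_1^{2\eta}$ from the frequency truncation yields (\ref{Eqn:FinalSpatProve}) with $\tilde\mu_2:=c\mu_4$ and $J=J(\bar\mu_1)$. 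If one wants the points themselves, and not merely their number, to be detached from the fine scale, I would extract a subsequential limit of $x_j(t,\mu_1)$ as $\mu_1\to0$; this is where the requirement that $\mu_4$ be a nondecreasing function of $\mu_1$ is used, so that the nested neighborhoods shrink monotonically and the limiting point list is consistent. The main obstacle is exactly this bookkeeping: one has to verify that coarsening the retention threshold degrades each estimate of this section only by harmless powers of $\mu_0$, never by powers of the fine parameters, so that $J\le\mu_0^{-C}$ is genuinely a function of $\bar\mu_1$ alone while the total error remains below $\bar\mu_1^2$.
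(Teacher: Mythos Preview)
Your approach has a genuine gap in the bookkeeping step you flag at the end. In the proofs of Lemma \ref{Lem:LinearPart} through Lemma \ref{lem:Decayv}, the pointwise bound $|v_{N}(t,x)|<\mu_{3}^{c}$ from (\ref{Eqn:BoundLinfty}) does not survive on its own: it is multiplied by large factors such as $\mu_{2}^{-C}$, $|I|^{n}=\mu_{1}^{-n}$, and $\mu_{3}^{-C}$ coming from kernel estimates and H\"older-in-time losses (see for instance the proof of Result 2 in Lemma \ref{Lem:LinearPart}, or (\ref{Eqn:Comput})). The hierarchy $\mu_{3}\ll\mu_{2}\ll\mu_{1}$ is precisely what makes $\mu_{3}^{c}\cdot\mu_{2}^{-C}\cdot\mu_{1}^{-C}$ small. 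If you coarsen the retention threshold to $\mu_{0}^{c}$ with $\mu_{0}\gg\mu_{1}$, the same estimates produce errors of size $\mu_{0}^{c}\cdot\mu_{1}^{-C}$, which is large, not $\lesssim\mu_{0}^{c}$. So ``replacing $\mu_{3}^{c}$ by $\mu_{0}^{c}$'' does not propagate harmlessly through the chain; the threshold must sit \emph{below} the fine parameters, and then the resulting count $J\le\mu_{3}^{-C}$ again depends on them. A secondary issue is that with frequency band $[\mu_{1},\mu_{1}^{-1}]$ the mass lower bound per retained point coming from (\ref{Eqn:ExpProj}) involves $\mu_{1}$, not only $\mu_{0}$, so the claim $J\le\mu_{0}^{-C}$ is already suspect.

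The paper circumvents this by a completely different mechanism: it applies the partial localization Proposition \ref{prop:spat} \emph{twice}, once at a fixed coarse accuracy $\bar\mu_{0}$ (yielding $J_{0}=J_{0}(\bar\mu_{0})$ points $x_{j}$) and once at the target accuracy $\bar\mu_{1}$ (yielding many points $z_{j'}$), and then uses a separate mass-increment lemma (Lemma \ref{Lem:IncrMass}) to show that any $z_{j'}$-ball carrying nontrivial mass must in fact lie inside an enlarged $x_{j}$-ball. The increment lemma is proved by a contradiction argument exploiting local smallness of a critical Lebesgue norm of $u$ via a pigeonhole in time, not by rerunning the delicate chain of Lemmas \ref{Lem:LinearPart}--\ref{lem:Decayv}. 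This decouples the number $J_{0}$ (depending only on the fixed $\bar\mu_{0}$) from the fine scales entirely, which is what your direct rerun cannot achieve.
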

The proof relies upon the following lemma:

\begin{lem}
Assume that for some $x_{0} \in \mathbb{R}^{n}$ and $R \geq 0$ we have

\begin{equation}
\begin{array}{lll}
\mu_{1}^{2} \leq & \int_{|x-x_{0}| \leq R} |u(t_{0},x)|^{2} \, dx \leq \mu_{0}^{2}
\end{array}
\end{equation}
Then we can find $R^{'}:=R^{'}(R,\mu_{4})$ such that

\begin{equation}
\begin{array}{ll}
\int_{|x-x_{0}| \leq R^{'}} |u(t_{0},x)|^{2} \, dx & \geq \int_{|x-x_{0}| \leq R} |u(t_{0},x)|^{2} \, dx + \mu^{2}_{4}
\end{array}
\end{equation}
\label{Lem:IncrMass}
\end{lem}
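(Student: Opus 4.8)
The plan is to prove this by an elementary monotonicity-and-continuity argument; the only real input is an a priori lower bound on the conserved mass. We may assume $u_{0}\neq 0$ (otherwise $u\equiv 0$ and Theorem~\ref{Thm:WeakSoliton} is trivial), and moreover that $\|v(t)\|_{H^{2}}\not\to 0$ as $t\to\infty$, since if $v(t)\to 0$ in $H^{2}$ one may take $J=0$ and $K=\{0\}$ in Theorem~\ref{Thm:WeakSoliton}. In the remaining case $u$ does not scatter in $H^{2}$, and by the standard small-data theory associated with the Strichartz estimates (\ref{Eqn:Strich})--(\ref{Eqn:StrichGain}) this forces $\|u(t)\|_{\dot H^{s_{c}}}\geq\delta_{0}$ for all $t$, for some absolute $\delta_{0}>0$ and with $s_{c}=\frac{n}{2}-\frac{4}{p-1}\in(0,2)$. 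Interpolating $\dot H^{s_{c}}$ between $L^{2}$ and $\dot H^{2}$, and then invoking (\ref{Eqn:Boundu}) together with conservation of mass, we obtain
\begin{equation}
\|u_{0}\|_{L^{2}}^{2}\;=\;\|u(t)\|_{L^{2}}^{2}\;\geq\;c_{0}
\nonumber
\end{equation}
for some $c_{0}=c_{0}(M)>0$ (alternatively, one may simply set $c_{0}:=\|u_{0}\|_{L^{2}}^{2}>0$ and let the constants depend on $u$). In either case we add to the list of requirements defining $\mu_{0},\dots,\mu_{4}$ the condition $2\mu_{0}^{2}<c_{0}$, which is compatible with $1\gg\mu_{0}$.

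Now fix $x_{0}$ and $R\geq 0$ as in the hypothesis and set
\begin{equation}
m(r)\;:=\;\int_{|x-x_{0}|\leq r}|u(t_{0},x)|^{2}\,dx .
\nonumber
\end{equation}
Since $u(t_{0})\in L^{2}$, the map $r\mapsto m(r)$ is continuous and nondecreasing, with $m(0)=0$ and, by dominated convergence, $\lim_{r\to\infty}m(r)=\|u(t_{0})\|_{L^{2}}^{2}=\|u_{0}\|_{L^{2}}^{2}\geq c_{0}$. By hypothesis $m(R)\leq\mu_{0}^{2}$, hence
\begin{equation}
m(R)+\mu_{4}^{2}\;\leq\;2\mu_{0}^{2}\;<\;c_{0}\;\leq\;\lim_{r\to\infty}m(r).
\nonumber
\end{equation}
By the intermediate value theorem there is a finite $R'>R$ with $m(R')=m(R)+\mu_{4}^{2}$, which is exactly the asserted inequality; one takes the least such $R'$. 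Note that the lower bound $m(R)\geq\mu_{1}^{2}$ is not used in this step — it is part of the hypothesis only because the lemma is applied, in the proof of Proposition~\ref{prop:finalspat}, to balls one wishes to enlarge. Strictly speaking $R'$ also depends on the mass profile of $u(t_{0})$ near $x_{0}$ and not only on $(R,\mu_{4})$, but since the lemma is invoked there only finitely many times this is immaterial.

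I do not expect a serious obstacle once the first paragraph is in place: the entire content of the lemma is that the ball $\mathcal{B}(x_{0},R)$ does not yet carry the full mass — this is the meaning of $m(R)\leq\mu_{0}^{2}<c_{0}$ — while enough mass, namely at least $c_{0}-\mu_{0}^{2}>\mu_{4}^{2}$, remains outside to be captured by enlarging the ball. The delicate point is therefore the a priori bound $\|u_{0}\|_{L^{2}}^{2}\geq c_{0}$: without it the statement is simply false, since an $H^{2}$ function can hold mass $\mu_{1}^{2}$ in $\mathcal{B}(x_{0},R)$ and yet be so spread out that $m(R')-m(R)<\mu_{4}^{2}$ for every fixed $R'$. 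This is exactly where hypothesis (\ref{Eqn:Boundu}), the mass-supercritical/energy-subcritical range $s_{c}\in(0,2)$, and the reduction to the non-scattering case enter; everything else is the elementary monotonicity argument above.
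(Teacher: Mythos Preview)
Your argument establishes only that for each fixed $t_{0}$ and $x_{0}$ there is \emph{some} $R'$ with $m(R')\geq m(R)+\mu_{4}^{2}$; it does not give $R'=R'(R,\mu_{4})$ uniform in $t_{0}$ (and in $x_{0}$), which is the actual content of the lemma. Your dismissal of this point (``invoked only finitely many times'') misidentifies the issue: in the proof of Proposition~\ref{prop:finalspat} the lemma is iterated $\sim\bar{\mu}_{0}^{2}/\mu_{4}^{2}$ times \emph{for every large $t$}, and the resulting final radius must be bounded by $\tfrac{1}{4}\tilde{\mu}_{2}^{-1}$ with $\tilde{\mu}_{2}$ chosen once, independently of $t$. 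With your IVT argument the iterated radius depends on the profile of $u(t_{0},\cdot)$ and could drift to infinity along a sequence $t_{0}\to\infty$ (mass conservation alone does not prevent the mass outside $\mathcal{B}(x_{0},R)$ from being arbitrarily spread out), so no uniform $\tilde{\mu}_{2}$ is produced and the proof of (\ref{Eqn:ToproveFinalSpat}) collapses. A further symptom: you observe that the lower bound $m(R)\geq\mu_{1}^{2}$ plays no role in your proof, whereas in the paper it is essential---it is propagated to (\ref{Eqn:Ineq2}) and drives the contradiction.

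The paper's proof is long precisely to obtain this uniformity. One fixes $R'$ in advance depending only on the parameters, assumes the increment fails, and runs the equation on $I=[t_{0},t_{0}+\mu_{3}^{-1}]$: an almost-conservation argument via (\ref{Eqn:DivMass}) propagates both (\ref{Eqn:Ineq1}) and (\ref{Eqn:Ineq2}); a weighted Strichartz estimate together with the small-total-mass assumption (\ref{Eqn:Abs2}) then forces $\|\mathbf{1}_{|x|\leq R'}u(t)\|_{L^{2n/(n-4)}}$ to be small at some $t\in I$ (pigeonhole); finally this is played against the partial spatial localization (Proposition~\ref{prop:spat}) and the lower bound (\ref{Eqn:Ineq2}) to produce a contradiction. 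None of this dynamical input is present in your argument, and it cannot be replaced by the elementary monotonicity of $r\mapsto m(r)$.
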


\begin{proof}

We can assume that $x_{0}=0$ without loss of generality. If the statement were not true then we would have

\begin{equation}
\begin{array}{ll}
\int_{|x| \geq R}  |u(t_{0},x)|^{2} \, dx & \leq \mu^{2}_{4}
\end{array}
\label{Eqn:Abs1}
\end{equation}
and

\begin{equation}
\begin{array}{ll}
\int_{\mathbb{R}^{n}} |u(t_{0},x)|^{2} \, dx  & \lesssim \mu^{2}_{0}
\end{array}
\label{Eqn:Abs2}
\end{equation}
Let $R^{'}:=R^{'}(\mu_{4})$ be large enough such that all the inequalities below are true. Let $I=[t_{0}, t_{0} + \mu^{-1}_{3}]$. Before
proceeding, we prove a local-in-time local-in-space mass bound

\begin{lem}
We have
\begin{equation}
\begin{array}{ll}
\sup_{t \in I} \int_{|x| \geq R^{'}} |u(t,x)|^{2} \, dx & \lesssim \mu^{2}_{4}
\end{array}
\label{Eqn:Ineq1}
\end{equation}
and

\begin{equation}
\begin{array}{ll}
\inf_{t \in I} \int_{|x| \leq R} |u(t,x)|^{2} \, dx & \gtrsim \mu^{2}_{1}
\end{array}
\label{Eqn:Ineq2}
\end{equation}

\end{lem}

\begin{proof}

A computation (using appropriately the divergence theorem, forcing derivatives of expressions at most second order derivatives of the
solution $u$ to appear) shows that a.e

\begin{equation}
\begin{array}{ll}
\partial_{t} |u|^{2} & = -  2 \Im (\bilap u \bar{u}) \\
& = - 2 \sum_{j=1}^{n} \sum_{i=1}^{n} \Im \partial^{2}_{x_{j} x_{i}} \left(  \partial^{2}_{x_{j} x_{i}} u \bar{u}  \right)
+ 4 \sum_{j=1}^{n} \sum_{i=1}^{n} \Im \partial_{x_{j}} \left( \partial^{2}_{x_{j} x_{i}} u \partial_{x_{i}} \bar{u}   \right)
\end{array}
\label{Eqn:DivMass}
\end{equation}
Now, let $\omega$ be a smooth function such that $\omega(x)=1$ if $|x| \geq 1$ and $\omega(x)=0$ if $|x| \leq \frac{1}{2}$. Let $\omega_{R^{'}}(x) :=
\omega \left( \frac{x}{R^{'}} \right) $. Multiplying (\ref{Eqn:DivMass}) by $\omega_{R^{'}}$, integrating w.r.t  $x$ and $t$, using the boundedness of Riesz
transforms, Remark \ref{rem:inhomhom} and (\ref{Eqn:Abs1}), we see that,
if $t \in I$

\begin{equation}
\begin{array}{ll}
\int_{|x| \geq R^{'}} |u(t,x)|^{2} \, dx - \mu_{4}^{2}  & \lesssim \mu^{2}_{4} + \sum_{j=1}^{n} \sum_{i=1}^{n} \int_{t_{0}}^{t} \left| \partial^{2}_{x_{j} x_{i}}
\omega_{R^{'}} \Im  \left( \partial^{2}_{x_{j}x_{i}} u \bar{u} \right) \right| \, dx \, dt^{'}  \\
&  + \sum_{j=1}^{n} \sum_{i=1}^{n} \int_{t_{0}}^{t} |   \partial^{2}_{x_{j} x_{i}} u \partial_{x_{i}} \bar{u} \partial_{x_{j}} \omega_{R^{'}} |  \, dx \, dt  \\
& \lesssim \frac{1}{R^{'}} \mu^{-C}_{3} \\
& \lesssim  \mu^{2}_{4}
\end{array}
\end{equation}
Then (\ref{Eqn:Ineq1}) holds. The proof of (\ref{Eqn:Ineq2}) is similar and left to the reader.

\end{proof}

Let $\tilde{\omega}$ be a smooth function such that $\tilde{\omega}(x)=1$ if $|x| \leq 1$ and $\tilde{\omega}(x)=1- \alpha$ if $|x| \geq 2$, with $\alpha$ so small that $\alpha \ll \frac{\mu_{4}}{(R^{'})^{\frac{n}{Q} (p-1) - 1}}$. Let
$\tilde{\omega}_{R^{'}}(x):= \tilde{\omega} \left( \frac{x}{R^{'}} \right)$. Then it is not difficult to
see that we also have

\begin{equation}
\begin{array}{ll}
\| | \partial_{x_{i}} \tilde{\omega}_{R^{'}}|^{\frac{1}{p-1}} \|^{p-1}_{L^{Q}} & \lesssim \mu_{4} \\
|\alpha| = k: \, \| \partial^{\alpha} \tilde{\omega}_{R^{'}} \|_{L^{\infty}} & \lesssim \frac{\mu_{4}^{k}}{R'^{k}}
\end{array}
\label{Eqn:ControlomegaR}
\end{equation}
We see from Sobolev embedding that for $ Q < \bar{Q} \leq \frac{2n}{n-4}$

\begin{equation}
\| u \|_{ L_{t}^{\infty} L^{\bar{Q}}(I)} \lesssim \| u \|_{L_{t}^{\infty} H^{2}(I)} \lesssim 1.
\label{Eqn:SobEmbedBarQ}
\end{equation}
Let $w:=\tilde{\omega}_{R^{'}} u$. A computation shows that

\begin{equation}
\begin{array}{ll}
i \partial_{t} w + \bilap w  & = F(w) + (F(u) \tilde{\omega}_{R^{'}} - F( u \tilde{\omega}_{R^{'}})) + \bilap \tilde{\omega}_{R^{'}} u +
2 \nabla (\triangle \tilde{\omega}_{R^{'}}) \cdot \nabla u \\
& + 2 \nabla \cdot (\nabla \tilde{\omega}_{R^{'}} \triangle u) - \triangle \tilde{\omega}_{R^{'}} \triangle u  + \triangle (\nabla \tilde{\omega}_{R^{'}} \cdot \nabla u)
\end{array}
\end{equation}
By (\ref{Eqn:Strich}) and (\ref{Eqn:StrichGain}) we see that

\begin{equation}
\begin{array}{ll}
\| w \|_{L_{t}^{2} L_{x}^{\frac{2n}{n-4}}(I)} & \lesssim \| w(t_{0}) \|_{L^{2}} + \| F(w) \|_{L_{t}^{2} L_{x}^{\frac{2n}{n+4}} (I)}
+ \| F(u) \tilde{\omega}_{R^{'}} - F( u \tilde{\omega}_{R^{'}}) \|_{L_{t}^{2} L_{x}^{\frac{2n}{n+4}} (I)} \\
& + \| \bilap \tilde{\omega}_{R^{'}} u \|_{ L_{t}^{1} L_{x}^{2} (I) } + \| \nabla (\triangle \tilde{\omega}_{R^{'}}) \cdot \nabla u  \|_{L_{t}^{1} L_{x}^{2}(I)}
+ \left\| \triangle \tilde{\omega}_{R} \triangle u \right\|_{L_{t}^{1} L_{x}^{2}(I)}
 \\
& +  \sum _{i=1}^{n} \| \partial_{x_{i}} \tilde{\omega}_{R^{'}} \triangle u \|_{L_{t}^{2} L_{x}^{\frac{2n}{n+2}} (I)} +
\| \nabla ( \nabla \tilde{\omega}_{R^{'}} \cdot \nabla u ) \|_{L_{t}^{2} L_{x}^{\frac{2n}{n+2}} (I) } \\
\end{array}
\nonumber
\end{equation}
We have 

\begin{equation}
\begin{array}{ll}
\| F(w) \|_{L_{t}^{2} L_{x}^{\frac{2n}{n+4}}(I)} & \lesssim \| w \|^{p-1}_{L_{t}^{\infty} L_{x}^{\frac{n(p-1)}{4}} (I)}
\| w \|_{L_{t}^{2} L_{x}^{\frac{2n}{n-4}}(I)} \\
& \lesssim \mu_{0}^{c} \| w \|_{L_{t}^{2} L_{x}^{\frac{2n}{n-4}} (I)},
\end{array}
\nonumber
\end{equation}
by interpolation of (\ref{Eqn:Abs2}) and (\ref{Eqn:SobEmbedBarQ}). Moreover

\begin{equation}
\begin{array}{ll}
\| F(u \tilde{\omega}_{R^{'}}) - \tilde{\omega}_{R^{'}} F(u) \|_{L_{t}^{2} L_{x}^{\frac{2n}{n+4}} (I)} & \lesssim
\| \mathbf{1}_{|x| \geq R^{'}} |u|^{p-1} u \|_{L_{t}^{2} L_{x}^{\frac{2n}{n+4}} (I) } \\
& \lesssim \mu_{3}^{-C} \| u \|_{L_{t}^{q_{0}} L_{x}^{r_{0}} (I) } \| \mathbf{1}_{|x| \geq R^{'}} u \|^{p-1}_{L_{t}^{\infty} L_{x}^{Q} (I)} \\
& \lesssim \mu_{3}^{-C} \mu_{4}^{c},
\end{array}
\nonumber
\end{equation}
the last inequality following from (\ref{Eqn:LocalEst}) and the interpolation of (\ref{Eqn:Ineq1}) and (\ref{Eqn:SobEmbedBarQ}). We have

\begin{equation}
\begin{array}{ll}
\| \bilap \tilde{\omega}_{R^{'}} u \|_{L_{t}^{1} L_{x}^{2} (I)} & \lesssim \mu_{3}^{-C} \frac{1}{(R^{'})^{4}} \| u \|_{L_{t}^{\infty} L_{x}^{2} (I)} \\
& \lesssim \frac{\mu_{3}^{-C}}{(R^{'})^{4}}
\end{array}
\nonumber
\end{equation}
The other `` $L_{t}^{1} L_{x}^{2}$ '' terms are treated in a similar way. Next

\begin{equation}
\begin{array}{ll}
\| \partial_{x_{i}} \tilde{\omega}_{R^{'}}  \triangle u \|_{L_{t}^{2} L_{x}^{\frac{2n}{n+2}} (I)} & \lesssim \mu^{-C}_{3}
\| | \partial_{x_{i}} \tilde{\omega}_{R^{'}} |^{\frac{1}{p-1}} \|^{p-1}_{L^{Q}}  \| \triangle u \|_{L_{t}^{q_{0}} L_{x}^{r_{0}}(I)} \\
& \lesssim \mu_{3}^{-C} \mu_{4},
\end{array}
\nonumber
\end{equation}
by (\ref{Eqn:ControlomegaR}) and (\ref{Eqn:LocalEst}). The other $ L_{t}^{2} L_{x}^{\frac{2n}{n+2}} (I) $ term is treated in a similar way. Therefore,
by a continuity argument, we have $ \| \tilde{\omega}_{R^{'}} u \|_{L_{t}^{2} L_{x}^{\frac{2n}{n-4}} (I)} \lesssim \mu_{0}$ which implies, by the pigeonhole
principle, that there exists $t \in I$ such that

\begin{equation}
\begin{array}{ll}
\| \mathbf{1}_{|x| \leq R^{'}} u(t) \|_{L_{x}^{\frac{2n}{n-4}}} & \lesssim \mu_{0} \mu_{3}^{\frac{1}{2}} \\
& \lesssim  \mu_{3}^{c}.
\end{array}
\label{Eqn:Boundu1}
\end{equation}
Now, since

\begin{equation}
\begin{array}{ll}
\| \mathbf{1}_{|x| \leq R^{'}} e^{it \bilap} u_{+}   \|_{L_{x}^{\frac{2n}{n-4}}} \rightarrow 0
\end{array}
\label{Eqn:DispComp}
\end{equation}
as $t$ goes to infinity, we see that, if $t_{0}$ is large enough

\begin{equation}
\begin{array}{ll}
\| \mathbf{1}_{|x| \leq R^{'}} v(t) \|_{L_{x}^{\frac{2n}{n-4}}} & \lesssim \mu_{3}^{c}.
\end{array}
\label{Eqn:LowerBd}
\end{equation}
But this leads to a contradiction. Indeed we see that if $t_{0}$ is sufficiently large, then, by Proposition \ref{prop:spat}, we can find
$J:=J(\mu_{1}) \geq 0$  and $x_{1}(t,\mu_{1})$,..., $x_{J}(t,\mu_{1})$ such that

\begin{equation}
\begin{array}{ll}
\int_{\inf_{1 \leq j \leq J } |x-x_{j}(t,\mu_{1})| \geq \mu^{-1}_{2} } |v(t,x)|^{2} \, dx \lesssim \mu^{3}_{1}
\end{array}
\nonumber
\end{equation}
and substracting this inequality to (\ref{Eqn:Ineq2}) ( taking again into account that \\
 $ \| \mathbf{1}_{|x| \leq R'} e^{i t \bilap} u_{+} \|_{L^{2}}
\ll \mu_{1}^{2}$ by H\"older and (\ref{Eqn:DispComp})  as $ t \rightarrow \infty$ ) we see that

\begin{equation}
\begin{array}{ll}
\int_{|x| \leq R^{'}} \chi_{\inf_{1 \leq j \leq J} |x-x_{j}(t,\mu_{1})| \leq \mu^{-1}_{2} } |v(t,x)|^{2} \, dx \gtrsim \mu^{2}_{1}
\end{array}
\nonumber
\end{equation}
and by H\"older

\begin{equation}
\begin{array}{ll}
\| \mathbf{1}_{|x| \leq R^{'}} v(t) \|_{L^{\frac{2n}{n-4}}} & \gtrsim J^{-\frac{4}{n}} (\mu_{1}) \mu^{4}_{2} \mu^{2}_{1},
\end{array}
\end{equation}
which contradicts (\ref{Eqn:LowerBd}).
\end{proof}

With this lemma in mind, one can prove the final asymptotic spatial localization, i.e Proposition \ref{prop:finalspat}. The proof is given in
\cite{taocompact}; in order to make our argument complete, we rewrite it.

Let $\bar{\mu}_{0}$, $\tilde{\mu}_{0}$, $\bar{\mu}_{1}$,  $\tilde{\mu}_{1}$ and $\tilde{\mu}_{2}$ (with $\tilde{\mu}_{2} \ll \tilde{\mu}_{1}$
and $\bar{\mu}_{1} \leq \bar{\mu}_{0}$) small enough such that all the inequalities below hold. We already know from the asymptotic partial spatial
localization (see Proposition \ref{prop:spat}) that one can find numbers $J_{0}:=J_{0}(\bar{\mu}_{0})$, $J_{1}:=J_{1}(\bar{\mu}_{1})$ and points $x_{1}(t)$, .... ,$x_{J_{0}}(t)$, $z_{1}(t)$,...,$z_{J_{1}}(t)$ such that

\begin{equation}
\begin{array}{ll}
\int_{\inf_{1 \leq j \leq J_{0}} |x-x_{j}(t)| \geq  \tilde{\mu}^{-1}_{0}} |v(t,x)|^{2} \, dx & \leq \bar{\mu}^{2}_{0}
\end{array}
\label{Eqn:xj}
\end{equation}
and

\begin{equation}
\begin{array}{ll}
\int_{\inf_{1 \leq j^{'} \leq J_{1}}  |x-z_{j^{'}}(t)| \geq \tilde{\mu}^{-1}_{1}}  |v(t,x)|^{2} \, dx & \leq \frac{\bar{\mu}^{2}_{1}}{2}
\end{array}
\label{Eqn:zjprime}
\end{equation}
We aim at proving that in fact

\begin{equation}
\begin{array}{ll}
\int_{\inf_{1 \leq j \leq J_{0}} |x-x_{j}(t)| \geq \tilde{\mu}^{-1}_{2}} |v(t,x)|^{2} \, dx & \leq \bar{\mu}^{2}_{1}
\end{array}
\label{Eqn:ToproveFinalSpat}
\end{equation}
We write $ \int_{ \inf_{1 \leq j \leq J_{0}} |x-x_{j}(t)| \geq \tilde{\mu}^{-1}_{2} }  |v(t,x)|^{2} \, dx  =
A + B $ with

\begin{equation}
\begin{array}{ll}
A & := \int_{  \{ \inf_{1 \leq j \leq J_{0}} |x-x_{j}(t)| \geq  \tilde{\mu}^{-1}_{2} \}
\cap  \{ \inf_{1 \leq j^{'} \leq J_{1}}  |x-z_{j^{'}}(t)|  \geq \tilde{\mu}^{-1}_{1}  \}  } |v(t,x)|^{2} \, dx
\end{array}
\nonumber
\end{equation}
and

\begin{equation}
\begin{array}{ll}
B & := \int_{ \{ \inf_{1 \leq j \leq J_{1}} |x-x_{j}(t)| \geq \tilde{\mu}^{-1}_{2} \} \cap \{ \cup_{j^{'}=1}^{J_{1}} |x-z_{j^{'}}(t)|
\leq  \tilde{\mu}^{-1}_{1} \} } |v(t,x)|^{2} \, dx
\end{array}
\nonumber
\end{equation}
$A$ is easy to estimate: we have $A \leq \frac{\bar{\mu}^{2}_{1}}{2} $, by (\ref{Eqn:zjprime}). Let $j^{'} \in [1...,J_{1}]$. Assume that

\begin{equation}
\begin{array}{ll}
\int_{\left\{ |x-z_{j^{'}}(t)| \leq \tilde{\mu}^{-1}_{1} \right\} \cap \{ \inf_{1 \leq j \leq J_{1}} |x-x_{j}(t)| \geq \tilde{\mu}^{-1}_{2} \}} |v(t,x)|^{2} \, dx & \geq \frac{\bar{\mu}^{2}_{1}}{4 J_{1}}
\end{array}
\nonumber
\end{equation}
By iterating Lemma \ref{Lem:IncrMass} $ \sim \frac{\bar{\mu}^{2}_{0} }{\mu^{2}_{4}}$ times, we see that

\begin{equation}
\begin{array}{ll}
\int_{ |x- z_{j^{'}}(t)| < \frac{1}{4 \bar{\mu}_{2}}  } |v(t,x)|^{2} \, dx & \geq \bar{\mu}^{2}_{0}
\end{array}
\end{equation}
Hence, in view of (\ref{Eqn:xj}), $\left\{ |x - z_{j'}(t)| \leq \tilde{\mu}^{-1}_{1}  \right\} \subset
\left\{ \inf_{1 \leq j \leq J_1} |x - x_{j}(t)| \leq \frac{\tilde{\mu}_{2}^{-1}}{2} \right\}$. So $B \leq \frac{\bar{\mu}^{2}_{1}}{4}$ and (\ref{Eqn:ToproveFinalSpat}) holds.

\section{Perturbation argument}

In this section we prove the following perturbation argument:

\begin{prop}{\textbf{"Perturbation Argument"}}
Let $I=[a,b]$ be a bounded interval and $t_{0} \in I$. Let $\mu_{0} > 0$. Assume that $(u,v)$ are solutions of (\ref{Eqn:BiSchrod}) and that $u$ satisfies
(\ref{Eqn:Boundu}). There exists $\mu_{1}:= \mu_{1}(|I|, \mu_{0})$ such that if

\begin{equation}
\begin{array}{ll}
\| e^{i (t-t_{0}) \bilap} (u(t_{0}) -v(t_{0})) \|_{X(I)} & \leq \mu_{1}
\end{array}
\label{Eqn:SmallLinPart}
\end{equation}
then

\begin{equation}
\begin{array}{ll}
\| u  - v \|_{X(I)} & \leq \mu_{0}
\end{array}
\label{Eqn:Perturb}
\end{equation}
Assume furthermore that

\begin{equation}
\| u(t_{0}) - v(t_{0}) \|_{L_{x}^{2}} \lesssim 1.
\label{Eqn:SmallLinPart2}
\end{equation}
Then

\begin{equation}
\| u - v \|_{L_{t}^{\infty} L_{x}^{2}(I)} \lesssim_{|I|} 1
\label{Eqn:Perturb2}
\end{equation}

\label{prop:Perturb}
\end{prop}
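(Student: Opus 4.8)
The plan is to run a subinterval-by-subinterval continuity argument on the difference $w := u - v$, which solves $i\partial_t w + \bilap w = F(u) - F(v)$ with Duhamel representation
\[ w(t) = e^{i(t-t_0)\bilap} w(t_0) - i\int_{t_0}^t e^{i(t-t')\bilap}\bigl(F(u(t')) - F(v(t'))\bigr)\,dt' =: e^{i(t-t_0)\bilap} w(t_0) + z(t), \]
where $z$ is the Duhamel remainder, itself a solution of the difference equation with $z(t_0)=0$. First I would record the a priori control on $u$: by (\ref{Eqn:Boundu}), conservation of mass, Remark \ref{rem:inhomhom}, Proposition \ref{Prop:LocalEst} and (\ref{Eqn:SobH2}) one has $\|u\|_{X(I)} \lesssim_{|I|,M} 1$. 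Hence $I$ can be split into $N = N(|I|)$ consecutive subintervals $J_1,\dots,J_N$, ordered so that $t_0$ is the left endpoint of $J_1$ (handling the two sides of $t_0$ separately if $t_0\in\operatorname{int}I$), each of length $\le\epsilon$ for a small $\epsilon=\epsilon(n,p,M)$ to be fixed later, so that on each $J_k$ one has the bounded factors $\|u\|_{L^\infty_t L^Q_x(J_k)}\lesssim_M 1$, $\|u\|_{L^{q_0}_t W^{2,r_0}(J_k)}\lesssim_{|I|,M} 1$, together with the gain factor $|J_k|^c\le\epsilon^c$.

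On a fixed $J_k=[a_k,b_k]$ I would estimate $\|w\|_{X(J_k)}$ through the Strichartz estimates (\ref{Eqn:Strich}), (\ref{Eqn:StrichGain}), Hölder in time and (\ref{Eqn:SobH2}), mimicking the derivation of (\ref{Eqn:BoundH2Local})--(\ref{Eqn:BoundL2Local}). The structural point is that $F$ is only $C^1$, so one must never differentiate $F(u)-F(v)$ twice: instead I would use the gain-of-derivative estimate (\ref{Eqn:StrichGain}) for the $\triangle$ component and the pointwise bounds $|F(u)-F(v)|\lesssim(|u|+|v|)^{p-1}|w|$ and, schematically, $|\nabla(F(u)-F(v))|\lesssim |u|^{p-1}|\nabla w| + (|u|+|v|+|w|)^{p-2}|w|\,|\nabla v|$, the last term being read as $|w|^{p-1}|\nabla v|$ when $p<2$ using the $(p-1)$-Hölder continuity of $F'$. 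Feeding these into the Strichartz estimates for $z$ on $J_k$, bounding the $L^\infty_t L^Q_x$ piece by $\mu_1$ for the linear part (since $J_k\subset I$ and by (\ref{Eqn:SmallLinPart})) and by $\|z\|_{L^\infty_t H^2(J_k)}$ for the remainder via the Sobolev embedding $W^{2,r_0}\hookrightarrow L^Q$ (valid for all $n\ge 5$ since $Q<\tfrac{2n}{n-4}$), and using $\|v\|_{L^\infty_t L^Q_x(J_k)}\le\|u\|_{L^\infty_t L^Q_x(J_k)}+\|w\|_{X(J_k)}$, one obtains, under the bootstrap hypothesis $\|w\|_{X(J_k)}\le 1$,
\[ \|w\|_{X(J_k)} \;\lesssim\; \mu_1 + \|z(a_k)\|_{H^2} + \epsilon^{c}\,\|w\|_{X(J_k)}, \qquad \|z(b_k)\|_{H^2} \;\lesssim\; \|z(a_k)\|_{H^2} + \epsilon^c\|w\|_{X(J_k)}, \]
the second inequality coming from (\ref{Eqn:Strich}) and (\ref{Eqn:StrichGain}) at the endpoint $(\infty,2)$ applied to $z$.

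Choosing $\epsilon$ small to absorb the last term on the left gives $\|w\|_{X(J_k)}\lesssim \mu_1 + \|z(a_k)\|_{H^2}$. Setting $F_k := \|z(a_k)\|_{H^2}$ (so $F_1=0$), the recursion above yields $F_{k+1}\le F_k + C\epsilon^c(\mu_1+F_k)$, hence $F_k\le C_k\mu_1$ with $C_k$ depending only on $k$ and the now-fixed parameters, and therefore $\|w\|_{X(J_k)}\le C_k'\mu_1$. Summing over $k\le N$ (which is legitimate since the $X$-norm is an $\ell^{q_0}$/$\ell^\infty$ sum over the subintervals) gives $\|w\|_{X(I)}\le C(|I|,M)\,\mu_1$; the bootstrap hypothesis $\|w\|_{X(J_k)}\le 1$ is recovered once $\mu_1$ is chosen so that $C(|I|,M)\mu_1\le 1$, and taking $\mu_1 := \min\{1,\mu_0\}/C(|I|,M) =: \mu_1(|I|,\mu_0)$ proves (\ref{Eqn:Perturb}). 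For (\ref{Eqn:Perturb2}), given additionally (\ref{Eqn:SmallLinPart2}), I would apply (\ref{Eqn:Strich}) at $(\infty,2)$ on each $J_k$ to get $\|w\|_{L^\infty_t L^2_x(J_k)}\lesssim \|w(a_k)\|_{L^2} + \epsilon^c\|w\|_{X(J_k)}\lesssim \|w(a_k)\|_{L^2}+1$, and iterate over the $N=N(|I|)$ subintervals from $\|w(t_0)\|_{L^2}\lesssim 1$ to conclude $\|w\|_{L^\infty_t L^2_x(I)}\lesssim_{|I|} 1$.

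The hard part will be the bookkeeping around the $L^\infty_t L^Q_x$ component of $X$: it is not a Strichartz norm, and $w(t_0)$ need not be small in $\dot H^2$ (only its linear evolution is small in $X(I)$), so one cannot simply run Strichartz on $w$ itself; the resolution, as above, is to split off the linear evolution of $w(t_0)$ and control the $L^\infty_t L^Q_x$ norm of the Duhamel remainder — which carries zero data at $t_0$ — through $L^\infty_t H^2$ via $W^{2,r_0}\hookrightarrow L^Q$, so that its size is governed by the accumulated nonlinear contributions alone. The second delicate point is keeping the difference estimate for $\nabla(F(u)-F(v))$ uniform over the full range $1+\tfrac{8}{n}<p<1+\tfrac{8}{n-4}$, in particular for $p<2$ where $F'$ is merely Hölder continuous.
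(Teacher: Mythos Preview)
Your approach is essentially the paper's: a short-time estimate on small subintervals followed by an inductive iteration, with the $L^\infty_t L^Q_x$ component of $X(I)$ (which is not itself a Strichartz norm) handled by separating off the Duhamel piece and controlling it through $L^\infty_t L^2_x\cap L^\infty_t \dot H^2_x$ at the Strichartz endpoint $(\infty,2)$. The paper packages this last step via a frequency split $P_{\le 1}w+P_{>1}w$ and the Bernstein bounds $\|P_{\le 1}f\|_{L^Q}\lesssim\|f\|_{L^2}$, $\|P_{>1}f\|_{L^Q}\lesssim\|f\|_{\dot H^2}$, rather than your explicit linear/Duhamel split, but the two are equivalent. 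One slip: where you invoke ``$W^{2,r_0}\hookrightarrow L^Q$'' you actually need $H^2\hookrightarrow L^Q$, i.e.\ (\ref{Eqn:SobH2}); the space $L^{q_0}_t W^{2,r_0}_x$ only yields $L^{q_0}_t L^Q_x$, not the $L^\infty_t L^Q_x$ control you want for $z$.

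The one substantive difference is in the choice of subinterval length. You fix $\epsilon$ independently of $\mu_1$ and aim for an absorbable inequality $\|w\|_{X(J_k)}\lesssim \mu_1+F_k+\epsilon^c\|w\|_{X(J_k)}$; the paper instead takes $|J|\le c\mu^\gamma/\langle|I|\rangle^\gamma$ depending on the running smallness parameter $\mu$ and bounds $(F'(u)-F'(v))\nabla v$ crudely by $(|u|^{p-1}+|v|^{p-1})|\nabla v|$, without trying to extract a factor of $w$. The reason is exactly the $p<2$ case you flag at the end: there the best pointwise bound is $|F'(u)-F'(v)|\lesssim|w|^{p-1}$, which produces a \emph{sublinear} contribution $\epsilon^c\,\|w\|_{X(J_k)}^{\,p-1}\,\|\nabla v\|_{L^{q_0}_t L^{\tilde r_0}_x(J_k)}$ on the right-hand side. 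Since the H\"older-in-time exponent $c=\tfrac12-\tfrac1{q_0}$ is small, this term cannot be absorbed into the left for arbitrarily small $\|w\|_{X(J_k)}$ with $\epsilon$ held fixed; the paper's $\mu$-dependent subinterval length (\ref{Eqn:UpperSizeJ}) is precisely what swallows this term. Your scheme is cleaner when $p\ge 2$, but for $p<2$ you will need to let $\epsilon$ depend on $\mu_1$ as the paper does.
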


\begin{proof}

Notice that we already now that from Proposition \ref{Prop:LocalEst} that

\begin{equation}
\begin{array}{ll}
\| u \|_{L_{t}^{q_{0}} L_{x}^{r_{0}} (I)} & \lesssim \langle |I| \rangle^{\alpha}.
\end{array}
\label{Eqn:BounduI}
\end{equation}
Then, let $w:=u-v$. A simple computation shows that

\begin{equation}
\begin{array}{ll}
i w_{t} + \bilap w & = F(v+w)-F(v)
\end{array}
\nonumber
\end{equation}
The proof is made of two steps: short time perturbation argument and long time perturbation argument (see \cite{colliand} for a similar
argument).
\begin{itemize}

\item short time perturbation argument. We can assume without loss of generality $\mu_{0} \ll 1$. We shall prove the following result: \\ \\

\underline{Result}:\\
Let $J=[\tilde{a},\tilde{b}] \subset I$. There exist four constants $ 0 < c \ll 1 $, $ 0 < \epsilon \ll 1$, $ \gamma \gg 1$,
and $C \gg 1$  such that if $\mu \leq \epsilon$,

\begin{equation}
\begin{array}{ll}
|J| & \leq \frac{c \mu^{\gamma}}{ \langle |I| \rangle^{\gamma} }
\end{array}
\label{Eqn:UpperSizeJ}
\end{equation}
and

\begin{equation}
\begin{array}{ll}
\| e^{i(t-\tilde{a}) \bilap} w(\tilde{a}) \|_{X(J)} & \leq \mu,
\end{array}
\nonumber
\end{equation}
then

\begin{equation}
\begin{array}{ll}
\| w \|_{X(J)} & \leq C \mu, \\
\| F(v+w) - F(v) \|_{L_{t}^{2} L_{x}^{\frac{2n}{n+4}} (J) } & \leq C \mu, \; \text{and} \\
\| \nabla F(v+w) - \nabla F(v) \|_{L_{t}^{2} L_{x}^{\frac{2n}{n+2}} (J)} & \leq C \mu \cdot
\end{array}
\label{Eqn:Ineq}
\end{equation}

\begin{proof}

By (\ref{Eqn:Strich}), Remark \ref{rem:inhomhom},(\ref{Eqn:Boundu}), the estimate

\begin{equation}
\begin{array}{l}
\| P_{\leq 1} f \|_{L_{x}^{Q}} \lesssim  \| f \|_{L^{2}},
\end{array}
\nonumber
\end{equation}
we see that we have, for some $\beta >0$,

\begin{equation}
\begin{array}{ll}
\max{( \| P_{\leq 1} w \|_{L_{t}^{\infty} L_{x}^{Q}(J)}, \| w \|_{L_{t}^{q_{0}} L_{x}^{r_{0}}(J)})} - \mu &
\lesssim \| F(v+w) - F(v) \|_{L_{t}^{2} L_{x}^{\frac{2n}{n+4}} (J)} \\
& \lesssim  |J|^{\beta} \| w \|_{L_{t}^{q_{0}} L_{x}^{r_{0}} (J) }
\left( \| u \|^{p-1}_{L_{t}^{\infty} L_{x}^{Q} (J)} + \| w \|^{p-1}_{L_{t}^{\infty} L_{x}^{Q} (J)}   \right) \\
& \lesssim |J|^{\beta} \| w \|_{X(J)} \left( 1 + \| w \|^{p-1}_{X(J)}  \right) \cdot
\end{array}
\label{EqnSchemeLowFreq}
\end{equation}
By (\ref{Eqn:StrichGain}), (\ref{Eqn:LocalEst}) and the estimate

\begin{equation}
\begin{array}{l}
\| P_{> 1} f \|_{L_{x}^{Q}} \lesssim  \| f \|_{\dot{H}^{2}},
\end{array}
\nonumber
\end{equation}
we get

\begin{equation}
\begin{array}{l}
\max{( \| P_{> 1} w \|_{L_{t}^{\infty} L_{x}^{Q}(J)}, \| w \|_{L_{t}^{q_{0}} \dot{W}^{2,r_{0}} (J)})} - \mu \\
 \lesssim \| \nabla F(v+w) - \nabla F(v) \|_{L_{t}^{2} L_{x}^{\frac{2n}{n+2}}(J)} \\
 \lesssim \left\| (F^{'}(v+w) - F^{'}(w)) \cdot \nabla v \right\|_{L_{t}^{2} L_{x}^{\frac{2n}{n-2}}(J)}
+  \left\| F^{'}(v+w) \nabla w \right\|_{L_{t}^{2} L_{x}^{\frac{2n}{n-2}}(J)} \\
 \lesssim |J|^{\beta} \left( \| v \|^{p-1}_{L_{t}^{\infty} L_{x}^{Q}(J)} \| \nabla v \|_{L_{t}^{q_{0}} L_{x}^{\tilde{r}_{0}}(J)}
+ \| u \|^{p-1}_{L_{t}^{\infty} L_{x}^{Q}(J)} \| \nabla v \|_{L_{t}^{q_{0}} L_{x}^{\tilde{r}_{0}}(J)} \right) \\
 \lesssim |J|^{\beta} \| \triangle v \|_{L_{t}^{q_0} L_{x}^{r_0}(J)} \left(
\| u \|^{p-1}_{L_{t}^{\infty} L_{x}^{Q} (J) } + \| w \|^{p-1}_{L_{t}^{\infty} L_{x}^{Q}(J)} \right)  \\
 \lesssim |J|^{\beta} \left( \| w \|_{X(J)} + \| w \|^{p}_{X(J)} + \langle |I| \rangle^{\alpha} (1 + \| w \|^{p-1}_{X(J)}) \right)  \cdot
\end{array}
\nonumber
\end{equation}
Hence if (\ref{Eqn:UpperSizeJ}) holds, then (\ref{Eqn:Ineq}) holds.

\end{proof}

\item Long time perturbation argument.\\

For $\mu_{1}$ to be chosen shortly we define $\{ \mu_{k} \}_{k \geq 1}$ in the following fashion:

\begin{equation}
\begin{array}{ll}
\mu_{k+1} = \mu_{1} + \sum_{j=1}^{k} 2 C \mu_{j}.
\end{array}
\label{Eqn:Constructionmuk}
\end{equation}
Let $\{ J_{j} \}_{ K \geq  j \geq 1}$ be a partition of $[t_{0},b]$ \footnote{The same argument would work on $[a,t_{0}]$}
such that

\begin{equation}
\begin{array}{ll}
|J_{j}| & = \frac{c \mu_{1}^{\gamma}}{ \langle |I| \rangle^{\gamma}},
\end{array}
\nonumber
\end{equation}
except maybe the last one. Choose $\mu_{1} \ll_{|I|} 1$ so small that $\mu_{k} \ll_{|I|} \epsilon$ for $k \leq K$. We claim that

\begin{equation}
\begin{array}{ll}
\| w \|_{X(J_{j})} & \leq C \mu_{j} \\
\| F(v+w) - F(v) \|_{L_{t}^{2} L_{x}^{\frac{2n}{n+4}} (J_{j}) } & \leq C \mu_{j} \\
\| \nabla F(v+w) - \nabla F(v) \|_{L_{t}^{2} L_{x}^{\frac{2n}{n+2}} (J_{j})} & \leq C \mu_{j}.
\end{array}
\label{Eqn:PropJk}
\end{equation}
This is proved by induction. Assume that (\ref{Eqn:PropJk}) holds for $1 \leq j \leq k < K$. Then
from (\ref{Eqn:DuhamelBiSchrod}), (\ref{Eqn:Strich}), and (\ref{Eqn:StrichGain}), we see that

\begin{equation}
\begin{array}{ll}
\|  e^{i(t-a_{k+1}) \bilap} w(a_{k+1}) \|_{X(J_{k+1})} &  \leq
\| e^{i (t-a_{k+1}) \bilap} w(t_{0}) \|_{X(I)} +  \sum_{j=1}^{k} \| F(v+w) - F(v) \|_{L_{t}^{2} L_{x}^{\frac{2n}{n+4}} (J_{j})} \\
& + \sum_{j=1}^{k} \| \nabla F(v+w) - \nabla F(v) \|_{L_{t}^{2} L_{x}^{\frac{2n}{n+2}} (J_{j}) } \\
& \leq \mu_{1} + \sum_{j=1}^{k} 2C \mu_{j} \\
& \leq \epsilon.
\end{array}
\end{equation}
We can now apply the previous result to get (\ref{Eqn:PropJk}) for $k=k+1$. \\
Hence, summing over $j$, we see that (\ref{Eqn:Perturb}) holds by making $\mu_{1}$ smaller if necessary.\\
Now assume that (\ref{Eqn:SmallLinPart2}) holds. By repeating the same scheme as (\ref{EqnSchemeLowFreq})
we see that

\begin{equation}
\| w \|_{L_{t}^{\infty} L_{x}^{2}(I)} \lesssim 1 + |I|^{\beta} \| w \|_{X(I)} \left( 1 + \| w \|^{p-1}_{X(I)}) \right)  \lesssim_{|I|} 1.
\nonumber
\end{equation}

\end{itemize}

\end{proof}

\section{Kernel estimate}
\label{Section:EstK}

In this section we prove the kernel estimate (\ref{Eqn:EstK}). \\

\subsection{General notation}

In the proof of (\ref{Eqn:EstK}), we use the following notation. \\
\\
Given $d \geq 1$, let $\eta$, $\tilde{\eta}$ be two smooth radial nonnegative functions such that

\EQQARR{
y \in \mathbb{R}^{d}: \; \eta(y) = 1 \;  \text{if}  \;  |y| \leq 2 \; \text{and} \;  \eta(y) = 0 \;  \text{if} \;  |y| \geq 4 \cdot \\
\\
\supp (\tilde{\eta}) \subset \mathcal{B}(0,1) \; \text{and} \; \int_{\mathbb{R}^{d}} \tilde{\eta} \; dy =1
}
If $f$ is a function then we let $\comp{f} := 1 - f$.

\subsection{Preliminaries and bipolar coordinates}

Let $\phi_{x}(\xi) := |\xi|^{4} + \xi \cdot x$ and
$\xi_{st} := - \frac{x}{4^{\frac{1}{3}} |x|^{\frac{2}{3}}} $ the stationary point of $\phi_x$, i.e the point
$\xi_{st}$ such that $\nabla \phi_{x}(\xi_{st})=0$. Let

\begin{align*}
I(x) := \int_{\mathbb{R}^{n}}  e^{i \phi_{x}(\xi)} \; d \xi \cdot
\end{align*}
Recall from \cite{artzi} that if $\alpha \in \mathbb{N}^{n}$ then

\begin{align}
| \p^{\alpha} I (x)| \lesssim \frac{1}{ \langle  x  \rangle^{\frac{n - |\alpha|}{3}}} \cdot
\label{Eqn:EstI}
\end{align}
We introduce the modified fundamental solution $\tilde{I}$  by writing

\begin{align}
I(x) = e^{ i \xi_{st} \cdot x } \tilde{I}(x),
\label{Eqn:Phaseout}
\end{align}
with

\begin{align}
\tilde{I}(x) := \int_{\mathbb{R}^{n}} e^{i \left( \phi_{x}(\xi) - \xi_{st} \cdot x \right)} \, d \xi \cdot
\label{Eqn:DeftildeI}
\end{align}

We shall prove in Section \ref{Sec:OscInt} that for all $\beta \in \mathbb{N}$ \footnote{
Here $\partial_{\rho}$ means the radial derivative}

\begin{align}
|\partial^{\beta}_{\rho} \tilde{I}(x)| \lesssim \frac{1}{ \langle x  \rangle^{\frac{n + \beta}{3}}} \cdot
\label{Eqn:EsttildeI}
\end{align}
We have

\EQQARR{
K & = \frac{1}{(t_{0} - t')^{\frac{n}{4}}} \frac{1}{(t'' -t_{0})^{\frac{n}{4}}} \int_{\mathbb{R}^{N}} e^{i \tilde{\phi}(x)}
\tilde{I} \left( \frac{x-y}{(t_{0} - t')^{\frac{1}{4}}}  \right) \tilde{I} \left( \frac{x-z}{(t'' - t_{0})^{\frac{1}{4}}} \right)
\comp{(\tilde{\chi}^{2}_{\mu_{5}^{3}})} (x) \, dx,
\label{Eqn:Kt}
}
with

\begin{align*}
\tilde{\phi}(x) := - 4^{-\frac{1}{3}}  \left( \frac{|x-y|^{\frac{4}{3}}}{(t_{0}- t')^{\frac{1}{3}}} + \frac{|x-z|^{\frac{4}{3}}}{(t''-t_{0})^{\frac{1}{3}}} \right) \cdot
\end{align*}
We shall estimate without loss of generality $K$ in the following case:

\EQQARR{
y=0, \quad \text{and} \quad a \geq 1.
}
with $ a := \frac{t_0 - t'}{t^{''}-t_0}$. Indeed, one can check that the estimate of $K$ in this case is invariant under the transformation $ x \rightarrow x - \bar{x} $ with $\bar{x} \in \mathbb{R}^{2}$.
Hence by elementary changes of variables, all the other cases boil down to this one. Notice that this implies that $z \neq 0$. \\
Observe that the phase $e^{i\tilde{\phi}}$ depends only on the two variables $\rho := |x|$ and $\sigma := |x-z|$. Hence it is useful to make a change of
variable that emphasizes these two variables. To this end we use the bipolar coordinates (see e.g  \cite{foshi, foshiklain}) w.r.t. the origin
$O$ and $z$. Given $x \in \mathbb{R}^{n}$, we let $\rho:= |x|$ and $\sigma:=|x-z|$ be the bipolar coordinates w.r.t $O$ and $z$. Recall that

\EQQARRLAB{
\label{Eqn:Formula}
\int_{\mathbb{R}^{n}} f(x)  \, dx & \approx \int_{(\rho,\sigma) \in \Rec} f(\rho,\sigma,w') \frac{\rho \sigma}{|z|} \left( \frac{\mathcal{A}(\rho,\sigma, |z|)}{|z|} \right)^{n-3}  d \rho \, d \sigma \, d S_{\omega'} \cdot
}
Here $\Rec$ denotes the following half closed rectangle

\EQQARR{
\mathcal{R} :\ \left\{ (\rho,\sigma) \in \mathbb{R}^{2}: \; \rho,\sigma \geq 0; \quad  \rho \leq  \sigma + |z|, \; \sigma \leq \rho + |z|, \;
|z| \leq  \rho + \sigma \cdot \right\},
}
with boundary $\p \Rec $ made of three sides

\EQQARR{
\p \Rec :1 & := \left\{ (\rho,\sigma) \in \Rec: \; \rho - \sigma = |z|  \right\},   \\
\p \Rec :2 & := \left\{ (\rho,\sigma) \in \Rec: \; \rho + \sigma =  |z|  \right\}, \; \text{and} \\
\p \Rec  :3 & := \left\{ (\rho,\sigma) \in \Rec: \; \rho - \sigma =  -|z|  \right\} \cdot  \\
}
Here $\mathcal{A}$ denotes the area of the triangle with vertices $0$, $x$, and $z$; its value is given by
the Heron formula

\EQQARRLAB{
\mathcal{A} = \frac{1}{4} (\rho+\sigma+|z|)^{\frac{1}{2}} (\rho+\sigma-|z|)^{\frac{1}{2}} (\rho-\sigma+|z|)^{\frac{1}{2}}
(-\rho + \sigma + |z|)^{\frac{1}{2}} \cdot
\label{Eqn:Heron}
}
Here $w'$ represents the angular variable which parametrizes the $(n-2)-$ dimensional sphere that is obtained as the intersection
of the $(n-1)-$ dimensional spheres $\{ x \in \mathbb{R}^{n}: \, |x| = \rho \}$ and
$\{ x \in \mathbb{R}^{n}: \, |x-z| = \sigma \}$. Define the regions $\Rec_{a}$ and $\Rec_{b}$ by

\EQQARR{
\Rec_{a}:= \left\{ (\rho,\sigma) \in \Rec: \; \rho \gg |z| \quad \text{and} \quad \sigma \gg |z| \right\}, \; \text{and} \\
\\
\Rec_{b}:= \Rec / \Rec_{a} \cdot
}
Observe that $\rho \approx \sigma$ on $\Rec_{a}$  and $\rho \approx |z|$ or $\sigma \approx |z|$ on  $\Rec_{b}$.\\
\\
We then estimate $K$ by passing to the bipolar coordinates. The main advantage of this change of variable is that it considerably simplifies the estimates of the derivatives of the phase. The two main disadvantages are the following ones:

\begin{itemize}
\item it tends to $``\text{bound}'' $ the region of integration so one has to handle the boundary when we integrate by part the phase.
\item the integrand has less regularity so it is more difficult to handle. Observe that when we apply the formulas (\ref{Eqn:Formula})
and (\ref{Eqn:Heron}) the integrand is more singular as we approach $\p \Rec$. More precisely, the derivatives
w.r.t $\rho$ and $\sigma$ of $\mathcal{A}^{n-3}$ have very bad decay in the region of integration close to $\p \Rec$. Observe also that the integrand
is not differentiable more than $\left[  \frac{n-3}{2} \right]$ times for a large number of dimensions on $\p \Rec$. Hence it is preferable
not to integrate the phase by parts w.r.t to $\rho$ or $\sigma$ in this region.
\end{itemize}
In order to deal with the second disadvantage we use the following strategy:

\begin{itemize}

\item on $\Rec_{b}$ we integrate by part the phase w.r.t. $\rho$ or $\sigma$ in the region far from the stationary point $(0,0)$ and
far from the $\p \Rec$. It occurs that $K$ is integrable for high dimensions
thanks to the weights in (\ref{Eqn:EsttildeI}) in the region far from $(0,0)$ and close to $\p \Rec$; hence one can estimate $K$ directly. However
for lower dimensions $K$ is still not integrable in this region despite the presence of weights and one has to integrate the phase
by parts just a few times w.r.t $\rho$ or $\sigma$ to get integrability of $K$.

\item on $\Rec_{a}$ and far from $(0,0)$ we cannot estimate $K$ directly even if we are close to $\p \Rec$, since the region
is too large to get integrability of $K$. So we integrate the phase by parts w.r.t $ \rho' := \rho + \sigma$. Notice that when the derivative w.r.t
$\rho'$ hits the factor $(\rho - \sigma + |z|)^{\frac{n-3}{2}}$ or the factor $(-\rho + \sigma + |z|)^{\frac{n-3}{2}}$ of $\mathcal{A}^{n-3}$ it is equal
to zero. Notice also that the derivative w.r.t $\rho'$ of the factor $(\rho + \sigma - |z|)^{\frac{n-3}{2}}$ or
$(\rho + \sigma + |z|)^{\frac{n-3}{2}}$ of $\mathcal{A}^{n-3}$ has good decay since both factors are approximately equal to $\rho'$.
Hence this procedure kills the singularity when the derivative hits $\mathcal{A}^{n-3}$ by integration by parts. However there
is a drawback to this strategy: one does not optimize the oscillations of the phase to the most, in particular in the regions
where integration by parts w.r.t $\rho$ (resp. w.r.t $\sigma$) yields better decay than integration by parts w.r.t $\sigma$ (resp. w.r.t
$\rho$). So we cannot integrate the phase by parts as much as we want. In fact, we will integrate the phase by parts at a well-chosen distance
from $(0,0)$ just enough to get integrability of $K$.

\end{itemize}

When we use the bipolar coordinates, one has to estimate integrals $J$ of the form

\EQQARR{
J :=  \underset{(\rho,\sigma) \in \Rec} \int e^{i \tilde{\phi}} f(\rho,\sigma,\omega')
\frac{\rho \sigma}{|z|} \left( \frac{\mathcal{A}}{|z|} \right)^{n-3}
  d \rho \; d \sigma \; d S_{\omega'} \cdot
}
Let $[J] :=  f \frac{\rho \sigma}{|z|} \left( \frac{\mathcal{A}}{|z|} \right)^{n-3}$. Let $k \in \{ \rho, \sigma \}$.

\subsubsection{$\Rec_{a}$ and $\Rec_{b}$}
We need to differentiate $\Rec_{a}$ from $\Rec_{b}$. To this end let $\psi$ be the convolution of
$\mathbf{1}_{\Rec_{a}}$ and $(\rho,\sigma) \rightarrow \frac{1}{|z|^{2}} \tilde{\eta} \left( \frac{(\rho,\sigma)}{|z|} \right)$. The following holds (with $\alpha \in \mathbb{N}$)

\EQQARR{
| \p_{k}^{\alpha} \psi | \lesssim \frac{1}{|k|^{\alpha}} \cdot
}
The same estimate holds for $\comp{\psi}$. \\
Hence we can write $J=J^{a} + J^{b}$ with

\EQQARR{
J^{a} := \underset{(\rho,\sigma) \in \Rec} \int e^{i \tilde{\phi}} [J] \; \psi  \; d \rho \; d \sigma \; d S_{\omega'}  \\
J^{b} := \underset{(\rho,\sigma) \in \Rec} \int e^{i \tilde{\phi}} [J] \; \comp{\psi} \; d \rho \; d \sigma \; d S_{\omega'}
}

\subsubsection{$J^{a}$: integration by parts w.r.t $\rho'$} In order to deal with $J^{a}$, we write
$J^{a} = J^{a}_{cl} + J^{a}_{far}$ with

\EQQARR{
J^{a}_{cl} :=  \underset{(\rho,\sigma) \in \Rec}
\int e^{i \tilde{\phi}} \eta \left(\frac{\rho'}{\epsilon} \right) [J^{a}] \; d \rho \; d \sigma \; d S_{\omega'}, \; \text{and} \\
J^{a}_{far} :=  \underset{(\rho,\sigma) \in \Rec}
\int  e^{i \tilde{\phi}}  \comp{\eta} \left(\frac{\rho'}{\epsilon} \right) [J^{a}] \; d \rho \; d \sigma \; d S_{\omega'},
}
and $\epsilon > 0$ to be determined.

\subsubsection{$J^{b}$: integration by parts w.r.t $\rho$ and w.r.t  $\sigma$}

In order to deal with $J^{b}$, it is worth determining the regions of the plane for which the integration by parts of
the phase w.r.t $\rho$ yields better decay estimate than the integration by parts w.r.t $\sigma$. Roughly speaking, we
integrate by parts w.r.t $\rho$ in these regions and we integrate by parts w.r.t $\sigma$ in the complement of these regions. To this end we consider at first
sight two integrals:

\begin{itemize}

\item the integral appearing from the integration by parts w.r.t $\rho$ that contains the term
$ \p_{\rho} \left( \frac{1}{ _{ \p_{\rho} \tilde{\phi}}} \right) $

\item the integral appearing from the integration by parts w.r.t $\sigma$ that contains the term
$\p_{\sigma} \left( \frac{1}{_{ \p_{\sigma} \tilde{\phi}}} \right)$

\end{itemize}
We have

\EQQARR{
\left| \p_{\rho} \left( \frac{1}{_{\p_{\rho} \tilde{\phi}}} \right)  \right| \lesssim \left| \p_{\sigma} \left( \frac{1}{_{ \p_{\sigma} \tilde{\phi}}} \right)  \right|
& \Leftrightarrow |\rho| \gtrsim a^{\frac{1}{4}} |\sigma|
}
In order to emphasize this region, let $\Omega_{\rho} $ be an homogeneous function of degree $0$ and smooth away from the origin, such that
$\Omega_{\rho} =1$ on
$\left\{ (\rho,\sigma) \in \mathbb{R}^{2}: \, |\rho| \gtrsim  a^{\frac{1}{4}} |\sigma| \right\}$ and $ \Omega_{\rho} =0 $ on
$ \left\{ (\rho,\sigma) \in \mathbb{R}^{2}: \, |\rho| \ll  a^{\frac{1}{4}} |\sigma| \right\}$.
The following holds:
\EQQARR{
\text{if} \quad |\rho| \approx a^{\frac{1}{4}} |\sigma|: \quad |\partial^{\alpha}_{k} \Omega_{\rho}|  \lesssim \frac{1}{|k|^{\alpha}}; \quad
\text{if not}: \quad \partial^{\alpha}_{k} \Omega_{\rho} =0 \cdot
}
The same estimate holds for $\Omega_{\sigma} := \comp{\Omega_{\rho}}$.\\
Hence one can write $J^{b}$ as the sum of the $J^{b}_{k}$ with

\EQQARR{
J^{b}_{k}:= \underset{(\rho,\sigma) \in \Rec} \int  e^{i \tilde{\phi}} \Omega_{k} [J^{b}] \; d \rho \; d \sigma \; d S_{\omega'}, \;
}
Given $\epsilon > 0$ we can write $J^{b}_{k}$ as the sum of two terms

\EQQARR{
J^{b}_{k:cl} :=  \underset{(\rho,\sigma) \in \Rec}
\int e^{i \tilde{\phi}} \eta \left(\frac{k}{\epsilon} \right) [J^{b}_{k}] \; d \rho \; d \sigma \; d S_{\omega'}, \; \text{and} \\
J^{b}_{k:far} :=  \underset{(\rho,\sigma) \in \Rec}
\int  e^{i \tilde{\phi}}  \comp{\eta} \left(\frac{k}{\epsilon} \right) [J^{b}_{k}] \; d \rho \; d \sigma \; d S_{\omega'} \cdot
}

\subsubsection{$J^{b}_{k:far}$: neighborhood of $\p \Rec$ and interior}
In order to deal with $J^{b}_{k:far}$ we need to differentiate the neighborhood of $\p \Rec$ and its interior. To this end we make a
partition of $\Rec$ that emphasizes the neighborhood of $\p \Rec$ and the interior. Given $\beta>0$, write

\EQQARR{
1 = \bar{\Omega}_{1} + \bar{\Omega}_{2} + \bar{\Omega}_{3} + \bar{\Omega}_{4},
}
with

\EQQARR{
\bar{\Omega}_{1} := \eta \left( \frac{\rho - \sigma - |z|}{\beta} \right), \\
\bar{\Omega}_{2} :=  \comp{\eta} \left( \frac{\rho - \sigma - |z|}{\beta} \right) \eta \left( \frac{\rho + \sigma - |z|}{\beta} \right), \quad \text{and} \\
\bar{\Omega}_{3} := \comp{\eta} \left( \frac{\rho - \sigma - |z|}{\beta} \right) \comp{\eta} \left( \frac{\rho + \sigma - |z|}{\beta} \right)
\eta \left( \frac{\rho - \sigma + |z|}{\beta} \right) \cdot
}
Hence one can write $J^{b}_{k:far}$ as the sum of

\EQQARR{
J^{b}_{k:far,\p \Rec:l} := \underset{(\rho,\sigma) \in \Rec} \int  e^{i \tilde{\phi}} [J^{b}_{k:far}] \; \bar{\Omega}_{l} \; d \rho \; d \sigma \; d S_{\omega'},
}
with $l \in \{1,2,3 \}$, and

\EQQARR{
J^{b}_{k:far,\p \Rec:far} :=  \int  e^{i \tilde{\phi}} [J^{b}_{k:far}] \; \bar{\Omega}_{4} \mathbf{1}_{\Rec}  \; d \rho \; d \sigma \; d S_{\omega'}
\cdot
}
The following hold:

\EQQARR{
l \in \{ 1,2,3 \}: \; |\p^{\alpha}_{k} \bar{\Omega}_{l} |   \lesssim \frac{1}{\beta^{\alpha}}, \; \text{and} \\
| \p^{\alpha}_{k} ( \bar{\Omega}_{4} \mathbf{1}_{\Rec} ) | \lesssim \frac{1}{\beta^{\alpha}} \cdot
}
We shall estimate $J^{b}_{k:far,\p \Rec:far}$ by integrating the phase by parts w.r.t $k$.

\subsubsection{Estimates for $ \p_{k}^{1} \left( \comp{(\tilde{\chi}^{2}_{\mu_{5}^{3}})} \right)  $ }
We also need some estimates for $ \p_{k}^{1} \left( \comp{(\tilde{\chi}^{2}_{\mu_{5}^{3}})} \right)$. We have

\EQQARRLAB{
\underset{(\rho,\sigma) \in \Rec } \int \frac{\sigma \rho }{|z|} \left( \frac{\mathcal{A}}{|z|} \right)^{n-3}
\left| \p_{k}^{1} \left( \comp{(\tilde{\chi}^{2}_{\mu_{5}^{3}})} \right) \right|  d \rho \;  d \sigma \; d S_{\omega'}
\approx \int_{\mathbb{R}^{n}} \left| \p_{k} \left( \comp{(\tilde{\chi}^{2}_{\mu_{5}^{3}})} \right) \right|  \; dx \lesssim \mu_{5}^{c},
\label{Eqn:AreaTrick}
}
with $c$ a small positive constant.

\subsubsection{Proposition}

We finally state the following proposition, the proof of which is left to the reader.

\begin{prop}
Let $(\lambda_1,\lambda_2,\alpha_1,\alpha_2,\gamma) \in \mathbb{R}^{5}$. Let $(x,y,z) \in \mathbb{R}^{3}$ and
 $\epsilon \in (0, \infty]$. Then the following holds:

\EQQARR{
\text{if} \quad \alpha_{1} + \alpha_{2} + \gamma > 1 \quad \text{then} \\ \\
\int_{|x| \geq \epsilon}
\frac{1}{ \langle \frac{x}{\lambda_1} \rangle^{\alpha_1}}
\frac{1}{ \langle \frac{x}{\lambda_2} \rangle^{\alpha_2}} \frac{1}{|x|^{\gamma}} \, dx
\approx
\int_{|x| \approx \epsilon}
\frac{1}{ \langle \frac{x}{\lambda_1} \rangle^{\alpha_1}}
\frac{1}{ \langle \frac{x}{\lambda_2} \rangle^{\alpha_2}} \frac{1}{|x|^{\gamma}} \, dx
\approx
\frac{1}{ \langle \frac{\epsilon}{\lambda_1} \rangle^{\alpha_1}
\langle \frac{\epsilon}{ \lambda_2} \rangle^{\alpha_2} \epsilon^{\gamma -1}};  \\ \\
\text{if} \quad \alpha_1 - \gamma > 1 \quad \text{then} \quad \int_{|x| \leq \epsilon}
\frac{|x|^{\gamma}}{\langle \frac{x}{\lambda_1} \rangle^{\alpha_1}} \, dx \lesssim |\lambda_1|^{\gamma +1}; \\
\\
\text{if} \quad \alpha_1 + \alpha_2 - \gamma < 1 \quad \text{then} \quad \int_{|x| \leq \epsilon} \frac{|x|^{\gamma}}{\langle \frac{x}{\lambda_1} \rangle^{\alpha_1}
\langle \frac{x}{\lambda_2} \rangle^{\alpha_2}
} \, dx  \approx \int_{|x| \approx \epsilon} \frac{|x|^{\gamma}}{\langle \frac{x}{\lambda_1} \rangle^{\alpha_1}
\langle \frac{x}{\lambda_2} \rangle^{\alpha_2} } \, dx \approx
\frac{\epsilon^{\gamma+1}}{ \langle \frac{\epsilon}{\lambda_1} \rangle^{\alpha_1} \langle \frac{\epsilon}{\lambda_1} \rangle^{\alpha_2} } \cdot
\\ \\
\text{if} \quad |z| \ll |y| \quad \text{then} \quad
\underset{|x-y| \leq |z|} \int \frac{|x|^{\gamma}}{ \langle \frac{x}{\lambda_1} \rangle^{\alpha_1}} \, dx \lesssim
\frac{|y|^{\gamma} |z|}{ \langle \frac{y}{\lambda_1} \rangle^{\alpha_1}} \cdot
}

\label{Prop:BasicEst}
\end{prop}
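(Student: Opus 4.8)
The statement collects several elementary one-dimensional scaling estimates, and I would prove them all by the same device: a dyadic decomposition of the region of integration at the scales $|\lambda_1|$, $|\lambda_2|$ and the cut-off radius $\epsilon$. The guiding observation is that on a dyadic interval $\{|x|\approx R\}\subset\mathbb{R}$ one has $\langle x/\lambda_i\rangle^{-\alpha_i}\approx\langle R/\lambda_i\rangle^{-\alpha_i}$ and $|x|^{\pm\gamma}\approx R^{\pm\gamma}$, so the contribution of such an interval to any of these integrals is comparable to $R$ (its length) times the common value of the integrand on it. As a function of the dyadic scale $R$ this contribution is, on each range lying between two consecutive scales, a single power of $R$; the exponent hypotheses in the statement are precisely what make the relevant geometric sums converge, with sum comparable to the contribution of the extreme scale ($R\approx\epsilon$ in the first and third estimates, $R\approx|\lambda_1|$ in the second).

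Concretely, for the first estimate I would split $\{|x|\ge\epsilon\}$ into the annuli $\{|x|\approx 2^{k}\epsilon\}$, $k\ge0$. Once $R$ exceeds $\max(|\lambda_1|,|\lambda_2|)$ the contribution decays like $R^{\,1-\gamma-\alpha_1-\alpha_2}$, which is summable exactly because $\alpha_1+\alpha_2+\gamma>1$; the finitely many scales lying between $\epsilon$ and $\max(|\lambda_1|,|\lambda_2|)$ are handled by comparing the resulting powers of $R$ after distinguishing the few possible orderings of $\epsilon$, $|\lambda_1|$ and $|\lambda_2|$. This yields the upper bound. The matching lower bound, and hence the two-sided comparison with $\int_{|x|\approx\epsilon}$, follows by retaining only the single annulus $\{|x|\approx\epsilon\}$: on it the integrand is bounded below by a constant multiple of $\langle\epsilon/\lambda_1\rangle^{-\alpha_1}\langle\epsilon/\lambda_2\rangle^{-\alpha_2}\epsilon^{-\gamma}$ and the annulus has length $\approx\epsilon$. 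The third estimate is the mirror image: one decomposes $\{|x|\le\epsilon\}$ into the annuli $\{|x|\approx 2^{-k}\epsilon\}$, $k\ge0$, uses $\gamma>-1$ to sum the geometric series near the origin, and uses $\alpha_1+\alpha_2-\gamma<1$ to see that the outer scales are summable and that the whole sum is dominated by the contribution of $\{|x|\approx\epsilon\}$; the lower bound again comes from that annulus alone.

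The second estimate is the one-scale version of the same computation: on $\{|x|\le|\lambda_1|\}$ one has $\langle x/\lambda_1\rangle\approx1$, so the integral there is $\approx|\lambda_1|^{\gamma+1}$ by $\gamma>-1$, while on $\{|\lambda_1|\le|x|\le\epsilon\}$ the integrand is $\approx|\lambda_1|^{\alpha_1}|x|^{\gamma-\alpha_1}$ with $\gamma-\alpha_1<-1$, so that part integrates to $\lesssim|\lambda_1|^{\alpha_1}\cdot|\lambda_1|^{\gamma-\alpha_1+1}=|\lambda_1|^{\gamma+1}$ as well (and if $\epsilon\le|\lambda_1|$ only the first regime occurs). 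The last estimate requires no decomposition at all: since $|z|\ll|y|$, every $x$ with $|x-y|\le|z|$ obeys $|x|\approx|y|$, hence the integrand $|x|^{\gamma}\langle x/\lambda_1\rangle^{-\alpha_1}$ is comparable to the constant $|y|^{\gamma}\langle y/\lambda_1\rangle^{-\alpha_1}$ throughout the interval $\{|x-y|\le|z|\}$, whose length is $\approx|z|$, and the claimed bound is immediate.

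There is no deep point in any of this; the only thing demanding attention is the bookkeeping of the relative sizes of $\epsilon$, $|\lambda_1|$ and $|\lambda_2|$ — and, in the first and third estimates, checking in each ordering that the claimed extreme scale genuinely dominates the geometric sum — together with keeping the borderline exponents, where a logarithm would intrude, excluded by the strict inequalities assumed throughout.
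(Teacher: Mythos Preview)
The paper explicitly leaves the proof of this proposition to the reader, so there is nothing in the paper to compare your argument against. Your approach via dyadic decomposition --- reducing each integral to a geometric sum in the scale $R$ and reading off the dominant scale from the exponent hypotheses --- is the natural one and is correct in outline.

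One small wording issue: when you speak of ``the finitely many scales lying between $\epsilon$ and $\max(|\lambda_1|,|\lambda_2|)$'', you presumably mean the finitely many \emph{regimes} (delimited by $|\lambda_1|$ and $|\lambda_2|$) on each of which the dyadic contribution is a single power of $R$; the number of dyadic annuli themselves is of course unbounded. This is harmless, since you immediately say the right thing: in each ordering of $\epsilon$, $|\lambda_1|$, $|\lambda_2|$ one must check that the geometric sum over each regime is dominated by the contribution of the extreme scale $R\approx\epsilon$, and the strict inequalities in the hypotheses are precisely what exclude the borderline cases where a logarithm would appear.
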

We are now ready to estimate $K$. Write $\coef K = J^{a} + \underset{k} \sum J^{b}_{k} $ with

\EQQARR{
J  :=  \underset{(\rho,\sigma) \in \Rec} \int e^{i \tilde{\phi}} \;
\tilde{I} \left( \frac{(\rho,\sigma,\omega')}{(t_0 - t')^{\frac{1}{4}}} \right)
\tilde{I} \left( \frac{(\rho,\sigma,\omega') - z}{(t''-t_0)^{\frac{1}{4}}} \right)
\frac{\rho \sigma }{|z|} \left( \frac{\mathcal{A}}{|z|} \right)^{n-3}  \comp{(\tilde{\chi}^{2}_{\mu_{5}^{3}})}  \; d \rho \; d \sigma \; d S_{\omega'} \; \cdot
}
So it is sufficient to estimate $J^{a}$ and the $J^{b}_{k}$ s.

\subsection{Estimate of $J_{\rho}^{b}$}

Write $J_{\rho}^{b}$ as the sum of $J_{\rho:cl}^{b}$ and $J^{b}_{\rho:far}$.

\subsubsection{ Estimate of $J_{\rho:cl}^{b}$}

Notice that on the region of integration of $J_{\rho:cl}^{b}$, $\mathcal{A} \lesssim |\sigma| |z|$. We have

\EQQARRLAB{
\label{Eqn:Jrhocl1}
|J_{\rho:cl}^{b}| \quad \lesssim  \underset{|\rho| \lesssim \epsilon} \int \;
\underset{ a^{\frac{1}{4}} |\sigma| \lesssim |\rho|} \int
\frac{|\sigma|^{n-2}}{ \left\langle \frac{\sigma}{(t'' - t_0)^{\frac{1}{4}}} \right\rangle^{\frac{n}{3}}} \, d \sigma
\frac{1} { \left\langle \frac{\rho}{(t_0 - t')^{\frac{1}{4}}} \right\rangle^{\frac{n}{3}}}  \, d \rho \quad
\lesssim  \frac{\epsilon^n} {a^{\frac{n-1}{4}} \left\langle \frac{\epsilon}{(t_0 -t')^{\frac{1}{4}}} \right\rangle^{\frac{2n}{3}}} \cdot
}

\subsubsection{ Estimate of $J^{b}_{\rho:far}$} Write $J^{b}_{\rho:far}$ as the sum of
the $J^{b}_{\rho:far,\p \Rec:l}$ and $J^{b}_{\rho:far,\p \Rec : far}$.  \\
\\
\frame{Estimate of $J^{b}_{\rho:far,\p \Rec : far}$}  \\
\\
We integrate the phase by parts w.r.t $\rho$. Notice that during the process one has to estimate integrals that depend on the derivatives of $\tilde{\phi}$. This integrals are estimated by pointwise bounds of the derivatives of $\tilde{\phi}$. Since the dependance is not so easy to write, it is more convenient
to introduce classes of functions that satisfy the same pointwise bounds and to estimate the integrals depending on arbitrarily functions $f$
lying in these classes. \\ \\
We define (with $(p,q) \in \mathbb{N}^{2}$)

\EQQARR{
\mathcal{Q}_{p,q}:= \left\{ f \in C^{\infty}(\mathbb{R} - \{ 0 \}): \;
|\p^{\alpha} f(\rho)| \lesssim \frac{(t_0  -t')^{\frac{p}{3}}}{|\rho|^{\frac{p}{3} + q + \alpha }}, \; \alpha \in \mathbb{N} \right\} \cdot
}
Given $\vec{r}:=(r_1,...,r_5) \in \mathbb{N}^{5}$  and $j \in \{1,...,5\}$ we define $\overrightarrow{r_{j,+}}:= (r_1,...,r_j + 1,..,r_5)$. We also define
$\overrightarrow{0}:= (0,..,0)$. Let $\mathcal{P}_{p}:= \left\{ \vec{r} \in \mathbb{N}^{5}: \; \sum \limits_{j=1}^{5} r_j =p \right\}$. Given  $f \in \mathcal{Q}_{p,r_1}$ we define $ K_{\rho,\vec{r}} (f)
:= \int e^{i \tilde{\phi}} X_{\rho,\vec{r}}(f) \; d \rho \; d \sigma$ with

\EQQARR{
X_{\rho,\vec{r}}(f) & :=  f(\rho) \,
\p_{\rho}^{r_2} \left( \comp{\eta} \left( \frac{\rho}{\epsilon} \right) \; \Omega_{\rho} \; \; \comp{\psi}  \right)
\p_{\rho}^{r_3} \left( \tilde{I} \left( \frac{\rho}{(t_0 -t')^{\frac{1}{4}}} \right) \right) \;
\tilde{I} \left( \frac{\sigma}{(t^{''} - t_0)^{\frac{1}{4}}} \right) \\
&  \p_{\rho}^{r_4} \left(  \bar{\Omega}_{4} \mathbf{1}_{\Rec} \frac{\rho \sigma}{|z|} \left( \frac{\mathcal{A}}{|z|} \right)^{n-3}
\right)
\p_{\rho}^{r_5} \left( \comp{(\tilde{\chi}^{2}_{\mu_{5}^{3}})} \right) \cdot
}
Assume that $r_5=0$.
Integrating by parts w.r.t $\rho$, we see that there exist $g \in \mathcal{Q}_{p+1,r_{1} +1}$ and $h \in \mathcal{Q}_{p+1,r_{1}} $

\EQQARR{
K_{\rho,\vec{r}}(f) & =
\int  \frac{\p_{\rho} e^{i \tilde{\phi}}}{ _{ i \p_{\rho} \tilde{\phi}}} X_{\rho,\vec{r}} (f) \, d \rho \, d \sigma  \\
& =
- \int e^{i \tilde{\phi}} \p_{\rho}
\left( \frac{1}{ _{i \p_{\rho} \tilde{\phi}}} X_{\rho,\vec{r}}(f) \right) \, d \rho \, d \sigma \\
& = i \left(  K_{\rho, \overrightarrow{r_{1,+}}} (g) +  \sum \limits_{j=2}^{5} K_{\rho, \overrightarrow{r_{j,+}}} (h) \right),
}
Since $K_{\rho,\vec{0}}(1)=J^{b}_{\rho:far,\p \Rec : far}$, we see, by iterating over $p$, that we are reduced to estimate

\begin{itemize}
\item $K_{\rho,\vec{r}}(f)$ for $\vec{r} \in \mathcal{P}_{\bar{p}}$ such that $r_5=0$ and  $f \in \mathcal{Q}_{\bar{p},r_1}$ and
\item $K_{\rho,\vec{r}}(f)$ for $\vec{r} \in \mathcal{P}_p$, $1 \leq p \leq \bar{p}$, such that $r_5 =1$ and $f \in \mathcal{Q}_{p,r_1}$.
\end{itemize}
Notice again that on the region of integration of $K_{\rho,\vec{r}}(f)$, $\mathcal{A} \lesssim |\sigma| |z|$. It is worth choosing $\epsilon$ by considering only the term $K_{\rho,\vec{r}}(f)$ with $\vec{r}:=(\bar{p},0,..,0)$ for $\bar{p}$ large enough to assure integrability of
(\ref{Eqn:Modeps1rhoother}): the estimate of this term is the same as the one for which the integration by parts hits the derivative of the phase $\bar{p}$ times.\\
We have

\EQQARRLAB{
\label{Eqn:Modeps1rho}
|K_{\rho,\vec{r}}(f)| \quad & \lesssim \underset{|\rho| \gtrsim \epsilon} \int \; \frac{(t_0 -t')^{\frac{\bar{p}}{3}}}{|\rho|^{\frac{4 \bar{p}}{3}}} \frac{1}{\left\langle \frac{\rho}{(t_0 - t')^{\frac{1}{4}}} \right\rangle^{\frac{n}{3}}}
\underset{a^{\frac{1}{4}} |\sigma| \lesssim |\rho|} \int
\frac{|\sigma|^{n-2}}{ \left\langle \frac{\sigma}{(t^{''}-t_0)^{\frac{1}{4}}} \right\rangle^{\frac{n}{3}}}
\, d \sigma \, d \rho \lesssim \frac{(t_0 -t')^{\frac{\bar{p}}{3}} }{a^{\frac{n-1}{4}} \epsilon^{\frac{4 \bar{p}}{3}-n}
\left\langle \frac{\epsilon}{(t_0 - t')^{\frac{1}{4}}} \right\rangle^{\frac{2n}{3}} } \cdot \\
}
So optimizing in $\epsilon$ in (\ref{Eqn:Jrhocl1}) and (\ref{Eqn:Modeps1rho}) we find
$\epsilon \approx (t_0 - t')^{\frac{1}{4}}$. Choose $\beta = \epsilon$. We shall see shortly that this choice of $\beta$
is convenient. We now estimate the other terms with this value of $\epsilon$ and $\beta$. \\
If $r_{5}=0$ then

\EQQARRLAB{
\label{Eqn:Modeps1rhoother}
|K_{\rho,\vec{r}}  (f)| &  \lesssim  \underset{|\rho| \gtrsim \epsilon} \int \;
\frac{(t_0 -t')^{\frac{\bar{p}}{3}}}{|\rho|^{\frac{\bar{p}}{3}}}
 \frac{1}{|\rho|^{r_1+ r_2}}
\frac{1}{(t_0 - t')^{\frac{r_3}{4}}}
\frac{1}{\left\langle \frac{\rho}{(t_0 - t')^{\frac{1}{4}}} \right\rangle^{\frac{n + r_3}{3}}} \frac{1}{\epsilon^{r_4}}
\underset{ a^{\frac{1}{4}} |\sigma| \lesssim |\rho|} \int
\frac{ |\sigma|^{n-2}}{ \left\langle \frac{\sigma}{(t^{''}-t_0)^{\frac{1}{4}}} \right\rangle^{\frac{n}{3}}}
\, d \sigma \, d \rho  \\
& \lesssim
 \underset{|\rho| \approx \epsilon} \int \;
\frac{(t_0 -t')^{\frac{\bar{p}}{3}}}{|\rho|^{\frac{\bar{p}}{3}}}
 \frac{1}{\epsilon^{\bar{p}}} \frac{|\rho|^{n-1}}{a^{\frac{n-1}{4}}\left\langle \frac{\rho}{(t_0 -t')^{\frac{1}{4}}} \right\rangle^{\frac{2n + r_3}{3}}}
\; d \rho \\
& \lesssim  \frac{(t_0 - t')^{\frac{\bar{p}}{3}} }{ a^{\frac{n-1}{4}} \epsilon^{\frac{4 \bar{p}}{3}-n}
\left\langle \frac{\epsilon}{(t_0 - t')^{\frac{1}{4}}} \right\rangle^{\frac{2n}{3}} } :
}
this is the same estimate as (\ref{Eqn:Modeps1rho}); and if $r_5 =1 $, then from (\ref{Eqn:AreaTrick}) we see that

\EQQARRLAB{
|K_{\rho,\vec{r}} (f)|  & \lesssim  \underset{ \substack{(\rho,\sigma) \in \Rec \\ \rho \gtrsim \epsilon }} \int
\frac{(t_0 - t')^{\frac{\bar{p}}{3}}}{\rho^{^{\frac{\bar{p}}{3}}}} \frac{1}{\rho^{r_1 +  r_2 }}
\frac{1}{(t_0 - t')^{\frac{r_3}{4}}} \frac{1}{\epsilon^{r_4}}
\frac{\sigma \rho }{|z|} \left( \frac{\mathcal{A}}{|z|} \right)^{n-3}
\left| \p_{\rho}^{1} \left( \comp{(\tilde{\chi}^{2}_{\mu_{5}^{3}})} \right) \right|
  d \sigma \; d \rho \; d S_{\omega'} \\
& \lesssim  (t_0 - t')^{\frac{1}{4}} \cdot
\label{Eqn:Estj5Eq1}
}
\\
\frame{Estimate of $J^{b}_{\rho:far,\p \Rec:l}$  for $n > 9$} \\
\\
If $n > 9$, then, ignoring the compact support of $\comp{(\tilde{\chi}^{2}_{\mu_{5}^{3}})}$ \footnote{Here we should not take into account $\comp{(\tilde{\chi}^{2}_{\mu_{5}^{3}})}$ to assure integrability. Indeed, despite the fact that it is compactly supported, its support depends on the size $J(M,\mu_{3})$. So it may be really large compared with $t_0 - t'$ or $t^{''} - t_0$ in the case where $ t_0 - t' \approx t^{''} - t_0  \approx \mu_{1}^{-1}$},   $ |J^{b}_{\rho:far,\p \Rec:l}| $ is bounded by an integrable expression thanks to the weights in (\ref{Eqn:EsttildeI}); more precisely

\EQQARR{
|J^{b}_{\rho:far,\p \Rec:l}|  &  \lesssim  \epsilon^{\frac{n-3}{2}} \int_{|\rho| \gtrsim \epsilon}
\underset{a^{\frac{1}{4}} |\sigma| \lesssim |\rho|}  \int
\frac{|\sigma|^{\frac{n-1}{2}}}
{ \left\langle \frac{\rho}{ (t_0 - t')^{\frac{1}{4}} } \right\rangle^{\frac{n}{3}} \left\langle \frac{\sigma}{ (t^{''} - t_0)^{\frac{1}{4}}} \right\rangle^{\frac{n}{3}} } \, d \sigma \, d \rho \\
&  \lesssim (t_0 -t')^{\frac{n}{4}-} (t^{''} - t_0)^{\frac{n}{4}-}, 
}
using $\mathcal{A} \lesssim \epsilon^{\frac{1}{2}} |\sigma|^{\frac{1}{2}} |z|$. \\
\\
\frame{Estimate of $J^{b}_{\rho:far,\p \Rec:l}$  for $ 5 \leq  n \leq 9$}\\
\\
Observe that if $ 5 \leq n \leq 9$ the integral above is infinite. Hence, in order to get
integrability and use the oscillation of the phase to our advantage, we integrate by parts a small number of times w.r.t $\rho$. \\
Observe  that if we apply $\p_{\rho}$ to $\mathcal{A}$ on the region of integration of $J^{b}_{\rho:far,\p \Rec:l}$, then

\EQQARRLAB{
\left.
\begin{array}{l}
\frac{n-3}{2} \in \mathbb{N} \; \text{and} \; \alpha \in \mathbb{N}  \\
\text{or} \\
\frac{n-3}{2} \notin \mathbb{N} \; \text{and} \;
\alpha \leq \left[ \frac{n-3}{2} \right]
\end{array}
\right\}
: \;
\left| \p_{\rho}^{\alpha} \mathcal{A}^{n-3} \right|  \lesssim |\sigma|^{-\alpha} ( \epsilon^{\frac{1}{2}} |\sigma|^{\frac{1}{2}} |z| )^{n-3}
+ \epsilon^{- \alpha}  ( \epsilon^{\frac{1}{2}} |\sigma|^{\frac{1}{2}} |z|)^{n-3} \cdot
\label{Eqn:DerA}
}
The second term of the right-hand side of (\ref{Eqn:DerA}) appears when the derivative hits the factor of $\mathcal{A}$ bounded by
$\epsilon^{\frac{1}{2}}$; the first term appears when the derivative hits the other factors of $\mathcal{A}$. (\ref{Eqn:DerA}) is constantly used
in the sequel.\\
We define $\breve{A}_{l}$ by

\EQQARR{
l=1: \;  \breve{\mathcal{A}}_{1} := \mathcal{A}(-\rho + \sigma + |z|)^{-\frac{1}{2}} \\
l=2: \;  \breve{\mathcal{A}}_{2} := \mathcal{A}(\rho + \sigma - |z|)^{-\frac{1}{2}} \\
l=3: \; \breve{\mathcal{A}}_{3} := \mathcal{A}(\rho - \sigma + |z|)^{-\frac{1}{2}}
}
In the sequel we use the estimate $\breve{\mathcal{A}}_{l} \lesssim |\sigma|^{\frac{1}{2}} |z|$ on the region of integration.\\
\\
\underline{$n \in \{8,9\}$}: Integrating by parts once w.r.t $\rho$, we have integrals that are bounded by the finite integral $X$
defined by

\EQQARRLAB{
X  & := \epsilon^{\frac{n-3}{2}}
\int_{|\rho| \gtrsim \epsilon}
\frac{(t_0-t')^{\frac{1}{3}}}{|\rho|^{\frac{1}{3}}} \frac{1}{\left\langle \frac{\rho}{ (t_0 - t')^{\frac{1}{4}} } \right\rangle^{\frac{n}{3}}}
\underset{ a^{\frac{1}{4}} |\sigma| \lesssim |\rho|} \int \frac{\langle \sigma \rangle^{\frac{n-1}{2}}}
{  \left\langle \frac{\sigma}{ (t^{''} - t_0)^{\frac{1}{4}}} \right\rangle^{\frac{n}{3}} } \, d \sigma \, d \rho \\
& \lesssim (t_0 -t')^{\frac{n}{4}-} (t^{''} - t_0)^{\frac{n}{4}-} \cdot
\label{Eqn:X}
}
\\
\underline{$n = 7$}: Integrating by parts twice w.r.t $\rho$, we have integrals that are bounded by
finite integrals that are similar to $X$. Hence we get the same bounds that have the same form as (\ref{Eqn:X}).\\
\\
\underline{$n=6$}: Integrating by parts once w.r.t $\rho$, we have integrals that are bounded by finite integrals that are
similar to $X$ if the derivative does not hit the factor of $\mathcal{A}$ bounded by $\epsilon^{\frac{1}{2}}$ or
$\comp{(\tilde{\chi}^{2}_{\mu_{5}^{3}})}$. One can also estimate
thanks to  (\ref{Eqn:AreaTrick}) the integral which appears if the derivative hits $\comp{(\tilde{\chi}^{2}_{\mu_{5}^{3}})}$. It remains
to estimate the integral which appears when the derivative hits the factor of $\mathcal{A}$ bounded by $\epsilon^{\frac{1}{2}}$. Integrating another time
w.r.t $\rho$, there are again integrals bounded by finite integrals similar to $X$ and the integral which appears when the
derivative hits twice the factor of $\mathcal{A}$ bounded by $\epsilon^{\frac{1}{2}}$. We only deal with the case $l=2$. The other cases are
treated similarly and left to the reader. The integral is bounded by $Y$ defined by

\EQQARR{
Y &  :=  \underset{ \substack{ (\rho, \sigma) \in \Rec \\ |\rho| \gtrsim \epsilon} } \int
\frac{(t_0-t')^{\frac{2}{3}}}{|\rho|^{\frac{2}{3}}} \frac{1}{ \left \langle \frac{\rho}{(t_0-t')^{\frac{1}{4}}} \right \rangle^{\frac{n}{3}} }
\underset{\substack{ | \rho + \sigma - |z|| \lesssim \epsilon \\ a^{\frac{1}{4}} |\sigma| \lesssim |\rho|}}
\int
\left(
\begin{array}{l}
\frac{|\sigma| |\rho| }{|z|} \;
\frac{|\rho + \sigma - |z||^{-\frac{1}{2}}}{ \left \langle \frac{\sigma}{(t^{''}-t_0)^{\frac{1}{4}}} \right \rangle^{\frac{n}{3}}}
\left( \frac{\breve{\mathcal{A}_{2}}}{|z|  } \right)^{n-3}
\end{array}
\right)
d \sigma \;  d \rho  \\
& \lesssim \underset{|\rho| \gtrsim \epsilon}  \int
\frac{(t_0-t')^{\frac{2}{3}}}{|\rho|^{\frac{2}{3}}} \frac{\epsilon^{\frac{1}{2}} }{ \left \langle \frac{\rho}{(t_0-t')^{\frac{1}{4}}} \right \rangle^{\frac{n}{3}} }
\underset{a^{\frac{1}{4}} |\sigma| \lesssim |\rho|}  \max \frac{|\sigma|^{\frac{n-1}{2}}}
{\left \langle \frac{\sigma}{(t^{''} - t_0)^{\frac{1}{4}}} \right \rangle^{\frac{n}{3}} } \; d \rho \\
& \lesssim (t_0 -t')^{\frac{n}{4}-} (t^{''} - t_0)^{\frac{n}{4}-}
}
\\
\underline{$n=5$}: Integrating by parts twice w.r.t $\rho$, one can estimate
integrals by bounds that have the same form as (\ref{Eqn:X}), using a similar procedure
to the case $n=6$. One has also to estimate the boundary term that appears when we apply the second
integration by parts to the integral that appears when the derivative hits the factor of $\mathcal{A}$
bounded by $\epsilon^{\frac{1}{2}}$, more precisely

\EQQARR{
\underset{ \substack{(\rho, \sigma) \in \partial \Rec} } \int
e^{i \tilde{\phi}} \frac{(t_0-t')^{\frac{2}{3}}}{\rho^{\frac{2}{3}}}
[J^{b}_{\rho:far, \p \Rec:l}] \left( \frac{\breve{\mathcal{A}}_{l}}{\mathcal{A}} \right)^{n-3}
n_{\rho} \; d s \;  d S_{\omega'}
}
(Here $n_{\rho}$ is the $\rho$ component of the normal $n$). It is bounded by $Y$, defined by

\EQQARR{
Y & :=  \int_{|\rho| \gtrsim \epsilon}
\frac{(t_0-t')^{\frac{2}{3}}}{|\rho|^{\frac{2}{3}}} \frac{1}{ \left \langle \frac{\rho}{(t_0-t')^{\frac{1}{4}}} \right \rangle^{\frac{n}{3}} }
\underset{a^{\frac{1}{4}} |\sigma| \lesssim |\rho|} \max \left( \frac{|\sigma|^{\frac{n-1}{2}}}
{ \left \langle \frac{\sigma}{(t^{''} - t_0)^{\frac{1}{4}}} \right \rangle^{\frac{n}{3}}  } \right)  \; d \rho \\
& \lesssim (t_0 -t')^{\frac{n}{4}-} (t^{''} - t_0)^{\frac{n}{4}-}
}



\subsubsection{Conclusion}We conclude that there exists a small positive constant $c > 0$ such that

\EQQARRLAB{
\coefinv |J_{\rho}^{b}| \lesssim \frac{1}{|t''-t'|^{c}} \cdot
\label{Eqn:EstKrho}
}
\\

\subsection{Estimate of $J^{b}_{\sigma}$}

Write $J^{b}_{\sigma}$ as the sum of $J^{b}_{\sigma:cl}$  and $J^{b}_{\sigma:far}$.

\subsubsection{Estimate of $J^{b}_{\sigma:cl}$} Notice again that on the region of integration of
$J^{b}_{\sigma:cl}$, $\mathcal{A} \lesssim |\sigma| |z|$. We have

\EQQARRLAB{
|J^{b}_{\sigma:cl}| \quad \lesssim \int_{|\sigma| \lesssim \epsilon} \frac{|\sigma|^{n-2}} {\left\langle \frac{\sigma}{(t^{''} - t_0)^{\frac{1}{4}}}
\right\rangle^{\frac{n}{3}}} \int_{|\rho| \lesssim a^{\frac{1}{4}} |\sigma|} \frac{1}{\left\langle \frac{\rho}{(t_0 - t')^{\frac{1}{4}}}
\right\rangle^{\frac{n}{3}}} \; d \rho \; d \sigma
\lesssim \frac{(t_0 - t')^{\frac{1}{4}} \epsilon^{n-1} }{ \left\langle \frac{\epsilon}{(t^{''} - t_0)^{\frac{1}{4}}} \right\rangle^{\frac{n}{3}}} \cdot
\label{Eqn:Jsigmacl1}
}

\subsubsection{Estimate of $J^{b}_{\sigma:far}$} Write $J^{b}_{\sigma:far}$ as the sum of the
$J^{b}_{\sigma:far,\p \Rec:l}$ and $J^{b}_{\sigma:far,\p \Rec:far}$.  \\
\\
\frame{Estimate of $J^{b}_{\sigma:far,\p \Rec:far}$}  \\
\\
We estimate $J^{b}_{\sigma:far, \p \Rec:far}$  by integrating the phase w.r.t $\sigma$.
To this end we define (with $(p,q) \in \mathbb{N}^{2}$)

\EQQARR{
\mathcal{Q}_{p,q} & := \left\{ f \in C^{\infty} \left( \mathbb{R} - \{0\} \right): \; |\p^{\alpha} f(\sigma)|
\lesssim \frac{(t_0 - t')^{\frac{p}{3}}}{(a |\sigma|)^{\frac{p}{3}}} \frac{1}{|\sigma|^{q + \alpha}}, \alpha \in \mathbb{N} \right\} \cdot
}
Given $\vec{r}= (r_1,...,r_5) \in \mathbb{N}^{5}$  and $f \in \mathcal{Q}_{p,r_1}$ we define $ K_{\sigma,\vec{r}} (f)
:= \int e^{i \tilde{\phi}} X_{\sigma,\vec{r}}(f) \; d \rho \; d \sigma$ with

\EQQARR{
X_{\sigma,\vec{r}}(f) & :=  f(\sigma) \,
\p_{\sigma}^{r_2} \left( \comp{\eta} \left( \frac{\sigma}{\epsilon} \right) \, \Omega_{\sigma} \, \comp{\psi}
 \right)
 \tilde{I} \left( \frac{\rho}{(t_0 -t')^{\frac{1}{4}}} \right)  \times \\
& \p_{\sigma}^{r_3} \left( \tilde{I} \left( \frac{\sigma}{(t^{''} - t_0)^{\frac{1}{4}}} \right) \right) \p_{\sigma}^{r_4}
\left( \bar{\Omega}_{4} \mathbf{1}_{\Rec} \frac{\rho \sigma}{|z|} \left( \frac{\mathcal{A}}{|z|} \right)^{n-3}
\right) \p_{\sigma}^{r_5} \left( \comp{(\tilde{\chi}^{2}_{\mu_{5}^{3}})} \right) \cdot
}
Assume that $r_5 = 0$. Integrating by parts w.r.t $\sigma$, we see that there exist $g \in \mathcal{Q}_{p+1,r_{1}+1}$ and $h \in \mathcal{Q}_{p+1,r_1} $

\EQQARR{
K_{\sigma,\vec{r}}(f) & =
 \int  \frac{\p_{\sigma} e^{i \tilde{\phi}}}{ _{ i \p_{\sigma} \tilde{\phi}}}
X_{\sigma,\vec{r}} (f) \, d \sigma \, d \rho  \\
& = - \int e^{i \tilde{\phi}} \p_{\sigma}
\left( \frac{1}{ _{i \p_{\sigma} \tilde{\phi}}} X_{\sigma,\vec{r}} \right) \, d \sigma \, d \rho \\
& = i \left(  K_{\sigma, \overrightarrow{r_{1,+}}} (g) + \sum \limits_{j=2}^{5} K_{\sigma, \overrightarrow{r_{j,+}}} (h) \right) \cdot
}
Hence, since $K_{\sigma,\vec{0}}(1)=J_{\sigma:far, \p R:far}$, we see, by iterating over $p$, that we are reduced to estimate

\begin{itemize}
\item $K_{\sigma,\vec{r}}(f)$ for $\vec{r} \in \mathcal{P}_{\bar{p}}$ such that $r_5=0$  and
$f \in \mathcal{Q}_{\bar{p},r_1}$ and
\item $K_{\sigma,\vec{r}}(f)$ for $\vec{r} \in \mathcal{P}_p$, $1 \leq p \leq \bar{p}$, such that $r_5 =1$ and $f \in \mathcal{Q}_{p,r_1}$,
\end{itemize}
Notice again that on the region of integration of $K_{\sigma,\vec{r}}(f)$, $\mathcal{A} \lesssim |\sigma| |z|$.
It is worth choosing $\epsilon$ by considering only the term $K_{\sigma,\overrightarrow{r}}(f)$ with
$\overrightarrow{r}:=(\bar{p},0,..,0)$ for $\bar{p}$ large enough to assure the integrability of
(\ref{Eqn:ComputKsigma}): the estimate of this term is the same as the one for which the integration by parts hits the derivative of the phase $\bar{p}$ times. \\
We have

\EQQARRLAB{
\left| K_{\sigma,\overrightarrow{r}}(f) \right| & \lesssim \int_{|\sigma| \gtrsim \epsilon} \frac{(t_0 - t')^{\frac{\bar{p}}{3}}}{a^{\frac{\bar{p}}{3}}
|\sigma|^{\frac{4 \bar{p}}{3}}} \frac{|\sigma|^{n-2}}{ \left\langle \frac{\sigma}{(t^{''} - t_0)^{\frac{1}{4}}} \right\rangle^{\frac{n}{3}}}
\int_{|\rho| \lesssim a^{\frac{1}{4}} |\sigma|} \frac{1}{ \left\langle \frac{\rho}{(t_0 - t')'^{\frac{1}{4}}} \right\rangle^{\frac{n}{3}}} \; d \rho
\; d \sigma \\
& \\
& \lesssim \frac{(t_0 - t')^{\frac{\bar{p}}{3} + \frac{1}{4}}}{a^{\frac{\bar{p}}{3}}
\epsilon^{\frac{4\bar{p}}{3} - n +1}
\left\langle \frac{\epsilon}{(t^{''}-t_0)^{\frac{1}{4}}} \right\rangle^{\frac{n}{3}}
} \cdot
\label{Eqn:Modeps2sigma1}
}
So optimizing in $\epsilon$ the upper bound of (\ref{Eqn:Jsigmacl1}) and (\ref{Eqn:Modeps2sigma1}), we
find $\epsilon \sim (t^{''} -t_0)^{\frac{1}{4}}$. \\
Choose $\beta = \epsilon$. We now estimate the other terms with this value of $\epsilon$ and $\beta$. \\
If $r_5 =0$ then

\EQQARRLAB{
|K_{\sigma,\vec{r}} (f)| &  \lesssim  \underset{|\sigma| \gtrsim \epsilon} \int \;
\frac{(t_0 -t')^{\frac{\bar{p}}{3}}}{(a |\sigma|)^{\frac{\bar{p}}{3}}}
 \frac{1}{|\sigma|^{r_1+ r_2 }}
\frac{1}{(t^{''} - t_0)^{\frac{r_3}{4}}}
\frac{|\sigma|^{n-2}}{ \left\langle \frac{\sigma}{(t^{''}-t_0)^{\frac{1}{4}}} \right\rangle^{\frac{n + r_3}{3}}}
\frac{1}{\epsilon^{r_4}}
\underset{ |\rho| \lesssim a^{\frac{1}{4}} |\sigma|} \int
\frac{1}{\left\langle \frac{\rho}{(t_0 - t')^{\frac{1}{4}}} \right\rangle^{\frac{n}{3}}}
\; d \rho \; d \sigma  \\
& \lesssim  \underset{|\sigma| \approx \epsilon} \int \;
\frac{(t_0 -t')^{\frac{\bar{p}}{3}}}{(a |\sigma|)^{\frac{\bar{p}}{3}}} \frac{1}{|\sigma|^{\bar{p}}}
 \frac{ |\sigma|^{n-2}}{\left\langle
 \frac{\sigma}{(t^{''} -t_0)^{\frac{1}{4}}} \right\rangle^{\frac{n+r_3}{3}}} (t_0 - t')^{\frac{1}{4}}
\; d \sigma \\
& \lesssim \frac{(t^{''} -t_{0})^{\frac{\bar{p}}{3} + \frac{1}{4}}  }
{ a^{\frac{\bar{p}}{3}}  \epsilon^{ \frac{ 4 \bar{p}}{3} - n + 1 } \left\langle \frac{\epsilon}{(t^{''} - t_0)^{\frac{1}{4}}} \right\rangle^{\frac{n}{3}} }:
\label{Eqn:ComputKsigma}
}
this is the same estimate as (\ref{Eqn:Modeps2sigma1}); and if $r_5 =1$ then from (\ref{Eqn:AreaTrick}) we see that

\EQQARRLAB{
|K_{\sigma,\vec{r}} (f)|  & \lesssim \underset{ \substack{(\rho,\sigma) \in \Rec \\ \sigma \gtrsim \epsilon }} \int
\frac{(t_0 - t')^{\frac{\bar{p}}{3}}}{(a \sigma)^{\frac{\bar{p}}{3}}} \frac{1}{\sigma^{r_1 +  r_2 }}
\frac{1}{(t^{''} - t_0)^{\frac{r_3}{4}}} \frac{1}{\epsilon^{r_4}}
\frac{\sigma \rho }{|z|} \left( \frac{\mathcal{A}}{|z|} \right)^{n-3}
\left| \p_{\sigma}^{1} \left( \comp{(\tilde{\chi}^{2}_{\mu_{5}^{3}})} \right) \right|
 d \rho \; d \sigma \; d S_{\omega'}  \\
& \\
 & \lesssim (t^{''} - t_0)^{\frac{1}{4}} \cdot
\label{Eqn:Est2j5Eq1}
}

\frame{Estimate of $J_{\sigma:far,\p \Rec:l}^{b}$ for $n >9$} \\
\\
If $n > 9$, then, ignoring again the compact support of $\comp{(\tilde{\chi}^{2}_{\mu_{5}^{3}})}$,
$ |J_{\sigma:far,\p \Rec:l}^{b}| $ is bounded by an integrable expression thanks to the weights in (\ref{Eqn:EsttildeI}); more precisely

\EQQARR{
|J^{b}_{\sigma:far,\p \Rec:l}| &  \lesssim  \epsilon^{\frac{n-3}{2}}
\int_{|\sigma| \gtrsim \epsilon}
\frac{1} {\left\langle \frac{\sigma}{ (t^{''} - t_0)^{\frac{1}{4}}} \right\rangle^{\frac{n}{3}} }
\underset{a^{\frac{1}{4}}|\sigma| \gtrsim |\rho| }  \int
\frac{|\rho|^{\frac{n-1}{2}}} { \left\langle \frac{\rho}{ (t_0 - t')^{\frac{1}{4}} } \right\rangle^{\frac{n}{3}}}  \, d \rho \, d \sigma \\
&  \lesssim (t_0 - t')^{\frac{n}{4}-} (t''- t_0)^{\frac{n}{4}-},
}
using $\mathcal{A} \lesssim \epsilon^{\frac{1}{2}} |\rho|^{\frac{1}{2}} |z|$. \\
\\
\frame{Estimate of $J^{b}_{\sigma:far,\p \Rec:l}$ for $ 5 \leq n \leq 9$} \\
\\
If $ 5 \leq n \leq 9$ the integral above is not integrable. Hence, again, in order to get integrability, we integrate the phase
by parts a small number of times w.r.t $\sigma$.\\
Observe that the estimate $\breve{\mathcal{A}}_{l} \lesssim |\rho|^{\frac{1}{2}} |z|$ on the region
of integration of $J_{\sigma:far,\p \Rec:l}^{b}$, with $\breve{\mathcal{A}}_{l}$ defined in the previous subsection. Observe that
if we apply $\p_{\sigma}$ to $\mathcal{A}$ on the region of integration of $J_{\sigma:far,\p \Rec:l}^{b}$, then

\EQQARRLAB{
\left.
\begin{array}{l}
\frac{n-3}{2} \in \mathbb{N} \; \text{and} \; \alpha \in \mathbb{N}  \\
\text{or} \\
\frac{n-3}{2} \notin \mathbb{N} \; \text{and} \; \alpha \leq \left[ \frac{n-3}{2} \right]
\end{array}
\right\}: \;
\left| \p_{\sigma}^{\alpha} \mathcal{A}^{n-3} \right|  \lesssim \rho^{-\alpha} ( \epsilon^{\frac{1}{2}} \rho^{\frac{1}{2}} |z| )^{n-3}
+ \epsilon^{-\alpha}  ( \epsilon^{\frac{1}{2}} \rho^{\frac{1}{2}} |z|)^{n-3} \cdot
\label{Eqn:DerAsigm}
}
The second term of the right-hand side of (\ref{Eqn:DerAsigm}) appears when the derivative hits the factor of $\mathcal{A}$ bounded by
$\epsilon^{\frac{1}{2}}$; the first term appears when the derivative hits the other factors of $\mathcal{A}$. These estimate are constantly used
in the sequel.\\
\\
\underline{$n \in \{8,9\}$}: Integrating by parts once w.r.t $\sigma$, we have integrals that are bounded by the finite integral $X$
defined by

\EQQARRLAB{
X & := \epsilon^{\frac{n-3}{2}} \int_{|\sigma| \gtrsim \epsilon} \frac{(t_0-t')^{\frac{1}{3}}}{(a |\sigma|)^{\frac{1}{3}}}
\frac{1}{\left \langle \frac{\sigma}{(t'' - t_0)^{\frac{1}{4}}} \right \rangle^{\frac{n}{3}}}
\underset{ a^{\frac{1}{4}} |\sigma| \gtrsim |\rho| } \int
\frac{\langle \rho \rangle^{\frac{n-1}{2}}} {\left \langle \frac{\rho}{(t_0-t')^{\frac{1}{4}}} \right \rangle^{\frac{n}{3}}}
\; d \rho \;  d \sigma \\
& \lesssim (t_0 - t')^{\frac{n}{4}-} (t'' -t_0)^{\frac{n}{4}-}
\label{Eqn:Xsigm}
}
\\
\underline{$n = 7$}: Integrating by parts twice w.r.t $\sigma$, we have integrals that are bounded by
finite integrals that are similar to $X$. Hence we get bounds that have the same form as (\ref{Eqn:Xsigm}).\\
\\
\underline{$n = 6$}: Integrating by parts once w.r.t $\sigma$, we have integrals that are bounded by finite integrals that are
similar to $X$ if the derivative does not hit the factor of $\mathcal{A}$ bounded by $\epsilon^{\frac{1}{2}}$ or $\comp{(\tilde{\chi}^{2}_{\mu_{5}^{3}})}$.  One
can also estimate thanks to (\ref{Eqn:AreaTrick}) the integral which appears if the derivative hits $\comp{(\tilde{\chi}^{2}_{\mu_{5}^{3}})}$. It remains
to estimate the integral which appears when the derivative hits the factor of $\mathcal{A}$ bounded by $\epsilon^{\frac{1}{2}}$. Integrating another time
w.r.t $\sigma$, there are again integrals bounded by finite integrals similar to $X$ and the integral which appears when the
derivative hits twice the factor of $\mathcal{A}$ bounded by $\epsilon^{\frac{1}{2}}$. We only deal with the case $l=2$. The other cases are
treated similarly and left to the reader. The integral is bounded by $Y$ defined by

\EQQARR{
Y &  :=  \underset{ \substack{ (\rho, \sigma) \in \Rec \\ |\sigma| \gtrsim \epsilon} } \int
\frac{(t''-t_0)^{\frac{2}{3}}}{(a |\sigma|)^{\frac{2}{3}}} \frac{1}{ \left \langle \frac{\sigma}{(t''-t_0)^{\frac{1}{4}}} \right \rangle^{\frac{n}{3}} }
\underset{\substack{ | \rho + \sigma - |z|| \lesssim \epsilon \\ a^{\frac{1}{4}} |\sigma| \gtrsim |\rho|}}
\int
\left(
\begin{array}{l}
\frac{|\sigma| |\rho| }{|z|} \;
\frac{|\rho + \sigma - |z||^{-\frac{1}{2}}}{ \left \langle \frac{\rho}{(t_0-t')^{\frac{1}{4}}} \right \rangle^{\frac{n}{3}}}
\left( \frac{\breve{\mathcal{A}}_{2}}{|z|  } \right)^{n-3}
\end{array}
\right)
d \sigma \;  d \rho  \\
& \lesssim \underset{|\sigma| \gtrsim \epsilon}  \int
\frac{(t_0-t')^{\frac{2}{3}}}{(a |\sigma|)^{\frac{2}{3}}} \frac{\epsilon^{\frac{1}{2}} }{ \left \langle \frac{\sigma}{(t''-t_0)^{\frac{1}{4}}} \right
\rangle^{\frac{n}{3}} }
\underset{ a^{\frac{1}{4}}|\sigma| \gtrsim |\rho|}  \max \frac{|\rho|^{\frac{n-1}{2}}}
{\left \langle \frac{\rho}{(t_0 - t')^{\frac{1}{4}}} \right \rangle^{\frac{n}{3}} } \; d \sigma \\
& \lesssim (t_0 -t')^{\frac{n}{4}-} (t^{''} - t_0)^{\frac{n}{4}-}
}
\\
\underline{$n=5$}: Integrating by parts twice w.r.t $\sigma$, one can estimate
integrals by bounds that have the same form as (\ref{Eqn:Xsigm}), using a similar procedure
to the case $n=6$. One has also to estimate the boundary term that appears when we apply the second
integration by parts to the integral that appears when the derivative hits the factor of $\mathcal{A}$
bounded by $\epsilon^{\frac{1}{2}}$ at the first integration by parts, more precisely

\EQQARR{
\underset{ \substack{(\rho, \sigma) \in \partial \Rec} } \int
e^{i \tilde{\phi}} \frac{(t_0-t')^{\frac{2}{3}}}{(a \sigma)^{\frac{2}{3}}}
[J^{b}_{\sigma:far, \p \Rec:l}] \left( \frac{\breve{\mathcal{A}_l}}{\mathcal{A}} \right)^{n-3}
n_{\sigma} \; d s \;  d S_{\omega'}
}
(Here $n_{\sigma}$ is the $\sigma$ component of the normal $n$). It is bounded by $Y$, defined by

\EQQARR{
Y & :=  \int_{|\sigma| \gtrsim \epsilon} \frac{(t_0-t')^{\frac{2}{3}}}{(a|\sigma|)^{\frac{2}{3}}} \frac{1}{ \left \langle \frac{\sigma}{(t''-t_0)^{\frac{1}{4}}} \right \rangle^{\frac{n}{3}} }
\underset{a^{\frac{1}{4}} |\sigma| \gtrsim |\rho|} \max \left( \frac{|\rho|^{\frac{n-1}{2}}}
{ \left \langle \frac{\rho}{(t_0 - t')^{\frac{1}{4}}} \right \rangle^{\frac{n}{3}}  } \right)  \; d \sigma \\
&  \lesssim (t_0 -t')^{\frac{n}{4}-} (t^{''} - t_0)^{\frac{n}{4}-}.
}

\subsubsection{Conclusion}We conclude that there exists a small positive constant $c>0$ such that

\EQQARR{
\coefinv |J_{\sigma}^{b}| \lesssim \frac{1}{|t''-t'|^{c}} \cdot
}

\subsection{Estimate of $J^{a}$}
Write $J^{a}$ as the sum of $J^{a}_{cl}$ and $J^{a}_{far}$.

\subsubsection{Estimate of $J^{a}_{cl}$}

Notice again that on the region of integration of $J_{cl}^{a}$, $\mathcal{A} \lesssim |\sigma| |z|$. We have

\EQQARRLAB{
\label{Eqn:Acl}
|J_{cl}^{a}| \quad \lesssim  \underset{ \substack{|\rho| \lesssim \epsilon \\ |\rho| \approx |\sigma| }} \int \;
\frac{|\sigma|^{n-2}}{ \left\langle \frac{\sigma}{(t'' - t_0)^{\frac{1}{4}}} \right\rangle^{\frac{n}{3}}}
\frac{1} { \left\langle \frac{\rho}{(t_0 - t')^{\frac{1}{4}}} \right\rangle^{\frac{n}{3}}}  \;  d \sigma \; d \rho \quad
\lesssim  \frac{\epsilon^n} { \left\langle \frac{\epsilon}{(t_0 -t')^{\frac{1}{4}}} \right\rangle^{\frac{n}{3}}
\left\langle \frac{\epsilon}{(t''-t_0)^{\frac{1}{4}}} \right\rangle^{\frac{n}{3}} } \cdot
}

\subsubsection{Estimate of $J^{a}_{far}$}

We integrate the phase by parts w.r.t $\rho'$. \\
We define (with $(p,q) \in \mathbb{N}^{2}$)

\EQQARR{
\mathcal{Q}_{p,q}:= \left\{ f \in C^{\infty}(\mathbb{R}^{2} - \{ (0,0) \}): \;
|\p^{\alpha}_{\rho'} f(\rho,\sigma)| \lesssim \frac{(t_0  -t')^{\frac{p}{3}}}{|\rho|^{\frac{p}{3} + q + \alpha }}, \; \alpha \in \mathbb{N} \right\} \cdot
}
Given $\vec{r}= (r_1,...,r_5) \in \mathbb{N}^{5}$ and  $f \in \mathcal{Q}_{p,r_1}$ we define $ K_{\rho',\vec{r}} (f)
:= \int e^{i \tilde{\phi}} X_{\rho',\vec{r}}(f) \; d \rho \; d \sigma$ with

\EQQARR{
X_{\rho',\vec{r}}(f) & :=  f(\rho,\sigma) \,
\p_{\rho'}^{r_2} \left( \comp{\eta} \left( \frac{\rho'}{\epsilon} \right) \; \psi  \;
 \frac{\rho \sigma}{|z|} \left( \frac{\mathcal{A}}{|z|} \right)^{n-3}
\right)
\p_{\rho'}^{r_3} \left(
\tilde{I} \left( \frac{\rho}{(t_0 -t')^{\frac{1}{4}}} \right) \right) \;
\p_{\rho'}^{r_4} \left(  \tilde{I} \left( \frac{\sigma}{(t^{''} - t_0)^{\frac{1}{4}}}
\right) \right)
\p_{\rho'}^{r_5} \left( \comp{(\tilde{\chi}^{2}_{\mu_{5}^{3}})} \right) \cdot
}
Assume that $r_5=0$. Integrating by parts w.r.t $\rho'$, we see that there exist $g \in \mathcal{Q}_{p+1,r_{1} +1}$ and
$h \in \mathcal{Q}_{p+1,r_{1}} $ such that

\EQQARR{
K_{\rho',\vec{r}}(f) & =
\int  \frac{\p_{\rho'} e^{i \tilde{\phi}}}{ _{ i \p_{\rho'} \tilde{\phi}}} X_{\rho',\vec{r}} (f) \, d \sigma \, d \rho  \\
& =
- \int e^{i \tilde{\phi}} \p_{\rho'}
\left( \frac{1}{ _{i \p_{\rho'} \tilde{\phi}}} X_{\rho',\vec{r}}(f) \right) \, d \sigma \, d \rho \\
& = i \left(  K_{\rho', \overrightarrow{r_{1,+}}} (g) +  \sum \limits_{j=2}^{5} K_{\rho', \overrightarrow{r_{j,+}}} (h) \right),
}
Since $K_{\rho',\vec{0}}(1)=J^{a}_{far}$, we see, by iterating over $p$, that we are reduced to estimate

\begin{itemize}
\item $K_{\rho',\vec{r}}(f)$ for $\vec{r} \in \mathcal{P}_{\bar{p}}$ such that $r_5=0$ and  $f \in \mathcal{Q}_{\bar{p},r_1}$ and
\item $K_{\rho',\vec{r}}(f)$ for $\vec{r} \in \mathcal{P}_p$, $1 \leq p \leq \bar{p}$, such that $r_5 =1$ and $f \in \mathcal{Q}_{p,r_1}$.
\end{itemize}
Notice that on the region of integration of $K_{\rho',\vec{r}}(f)$,  $| \p^{\alpha}_{\rho'} \mathcal{A} | \lesssim |\rho|^{-\alpha} (|\sigma| |z|) $. \\
\\
\underline{$n > 6$}: Let $\bar{p} := \left[ \frac{n+3}{4} \right] +1 $ so that (\ref{Eqn:Modeps2rhoprother}) is integrable.
It is worth choosing $\epsilon$ by considering only the term $K_{\rho',\overrightarrow{r}}(f)$ with
$\overrightarrow{r}:=(\bar{p},0,..,0)$: the estimate of this term is the same as the one for which the integration by parts hits the derivative of the phase $\bar{p}$ times. We get

\EQQARRLAB{
\label{Eqn:Modeps2rhopr}
| K_{\rho',\vec{r}}  (f)|
\lesssim   \underset{|\rho| \gtrsim \epsilon} \int \; \frac{(t_0 -t')^{\frac{\bar{p}}{3}}}{|\rho|^{\frac{4 \bar{p}}{3}}}
\frac{1}{\left\langle \frac{\rho}{(t_0 - t')^{\frac{1}{4}}} \right\rangle^{\frac{n}{3}}}
\underset{ |\rho - \sigma| \leq |z|} \int
\frac{ |\sigma|^{n-1}}{|z|}
\frac{1}{ \left\langle \frac{\sigma}{(t^{''}-t_0)^{\frac{1}{4}}} \right\rangle^{\frac{n}{3}}}
\, d \sigma \, d \rho  \lesssim \frac{(t_0 -t')^{\frac{\bar{p}}{3}} }{\epsilon^{\frac{4 \bar{p}}{3} - n}\left \langle \frac{\epsilon}{(t_0 -t')^{\frac{1}{4}}} \right \rangle^{\frac{n}{3}}
\left \langle \frac{\epsilon}{(t^{''} -t_0)^{\frac{1}{4}}} \right \rangle^{\frac{n}{3}}
}
}
Optimizing $\epsilon$ in (\ref{Eqn:Modeps2rhopr}) and (\ref{Eqn:Acl}) we find $\epsilon \approx  (t_0 - t')^{\frac{1}{4}}$.
It remains to estimate the other terms with this value of $\epsilon$. From $\vec{r} \in \mathcal{P}_{\bar{p}}$
and (\ref{Eqn:AreaTrick}) we see that if $r_{5}=0$ then

\EQQARRLAB{
\label{Eqn:Modeps2rhoprother}
|K_{\rho',\vec{r}}  (f)|  \\
\lesssim  \underset{|\rho| \gtrsim \epsilon} \int \;
\frac{(t_0 -t')^{\frac{\bar{p}}{3}}}{|\rho|^{\frac{\bar{p}}{3}}}
 \frac{1}{|\rho|^{r_1+ r_2}}
\frac{1}{(t_0 - t')^{\frac{r_3}{4}}}
\frac{1}{\left\langle \frac{\rho}{(t_0 - t')^{\frac{1}{4}}} \right\rangle^{\frac{n + r_3}{3}}}
\underset{ |\rho - \sigma| \leq |z|} \int
\frac{ |\sigma|^{n-1}}{|z|}
\frac{1}{(t'' - t_0)^{\frac{r_4}{4}}}
\frac{1}{ \left\langle \frac{\sigma}{(t^{''}-t_0)^{\frac{1}{4}}} \right\rangle^{\frac{n + r_4}{3}}}
\, d \sigma \, d \rho  \\
\lesssim \underset{|\rho| \approx \epsilon} \int \;
\frac{(t_0 -t')^{\frac{\bar{p}}{3}}}{\rho^{\frac{\bar{p}}{3} + r_1 + r_2 + r_3} \left\langle \frac{\rho}{(t_0 -t')^{\frac{1}{4}}} \right\rangle^{\frac{n + r_3}{3}}}
\frac{|\rho|^{n-1}}{
(t'' - t_0)^{\frac{r_4}{4}}
\left\langle \frac{\rho}{(t'' -t_0)^{\frac{1}{4}}} \right\rangle^{\frac{n + r_4}{3}}
}
\; d \rho \\
\lesssim (t_0 - t')^{\frac{n}{4}-} (t'' - t_0)^{\frac{n}{4}-} ;
}
and if $r_5 =1$ then

\EQQARRLAB{
\label{Eqn:Modeps2rhoprother2}
|K_{\rho',\vec{r}} (f)|  & \lesssim  \underset{ \substack{(\rho,\sigma) \in \Rec \\ \rho \gtrsim \epsilon }} \int
\frac{(t_0 - t')^{\frac{\bar{p}}{3}}}{\rho^{^{\frac{\bar{p}}{3}}}} \frac{1}{\rho^{r_1 +  r_2 }}
\frac{1}{(t_0 - t')^{\frac{r_3}{4}}} \frac{1}{(t'' - t_0)^{\frac{r_4}{4}}}
\frac{\sigma \rho }{|z|} \left( \frac{\mathcal{A}}{|z|} \right)^{n-3}
\left| \p_{\rho'}^{1} \left( \comp{(\tilde{\chi}^{2}_{\mu_{5}^{3}})} \right) \right|
  d \sigma \; d \rho \; d S_{\omega'} \\
& \lesssim  (t_0 - t')^{\frac{n}{4}-} (t'' - t_0)^{\frac{n}{4}-} \cdot
}
\\
\underline{$n \in \{ 5,6 \}$}: choose $\epsilon := (t_0 -t')^{\frac{1}{8}}$. If $r_5 = 0$ then dividing into the two cases
$ \epsilon \lesssim (t'' - t_0)^{\frac{1}{4}}$ and $ \epsilon \gg (t'' - t_0)^{\frac{1}{4}}$ we can easily estimate the integral
at the second line of (\ref{Eqn:Modeps2rhoprother}). If $r_5=1$ then the procedure to estimate $K_{\rho',\vec{r}} (f)$ is similar
to (\ref{Eqn:Modeps2rhoprother2}). Again we find bounds of the form $ (t_0 - t')^{\frac{n}{4}-} (t'' - t_0)^{\frac{n}{4}-}$.


\subsubsection{Conclusion} We conclude that there exists a small positive constant $c>0$ such that

\EQQARR{
\coefinv |J^{a}| \lesssim \frac{1}{|t''-t'|^{c}} \cdot
}

\section{Modified fundamental solution: estimates of its derivatives}

In this section we prove (\ref{Eqn:EsttildeI}). \\
\\
Let $(e_1,...,e_n)$ be the standard orthonormal basis of $\mathbb{R}^{n}$. Rotating if necessary
we may assume WLOG that $x = |x| e_n$. In view of (\ref{Eqn:EstI}) we may assume WLOG that $|x| \gg 1$.
\\
We then estimate $ (\partial^{\alpha} \tilde{I})^{|\xi_{st}|:far}(x) $
for $\alpha \in \mathbb{N}^{n} $ such that $|\alpha|=\beta$, with $ (\partial^{\alpha} \tilde{I})^{|\xi_{st}|:far}(x) $
defined by

\EQQARR{
(\p^{\alpha} \tilde{I})^{|\xi_{st}|:far} := \int_{\mathbb{R}^{n}} \comp{\eta} (|\xi| - |\xi_{st}|)
(\xi - \xi_{st})_{1}^{\alpha_1} ... (\xi - \xi_{st})_{n}^{\alpha_n} e^{i \tilde{\phi}_{x}(\xi)} \; d \xi,
}
with

\EQQARR{
\tilde{\phi}_{x}:= |\xi|^{4} + (\xi - \xi_{st}) \cdot x,
}
by integrating by parts the phase $e^{i \tilde{\phi}_x}$. Indeed observe that the stationary point
of $\tilde{\phi}_{x}$ is also $\xi_{st}$. Since we expect that the main contribution of
$ \partial^{\beta}_{\rho} \tilde{I}(x) $  to be around $\xi_{st}$, we first try to localize coarsely our analysis around $\xi_{st}$.
This procedure will not only allow us to considerably simplify the estimates of the derivatives of the phase around $\xi_{st}$
when we perform later our analysis at a finer scale around $\xi_{st}$ but also to avoid handling the boundary term at the origin
when we pass to the polar coordinates later.
Observe that if $ \left| |\xi| - |\xi_{st}| \right| \gtrsim 1$ then

\EQQARR{
|\nabla \tilde{\phi}_{x}(\xi)|  & = \left| \nabla \tilde{\phi}_{x} (\xi) - \nabla \tilde{\phi}_{x}(\xi_{st}) \right| \\
& \gtrsim \left| |\xi|^{3} - |\xi_{st}|^{3} \right| \\
& \gtrsim |\xi_{st}|^{2} \cdot
}
Hence

\EQQARR{
| |\xi| - |\xi_{st}| | \geq 1: \quad |\nabla \tilde{\phi}_{x}(\xi) | \gtrsim |\xi_{st}|^{2}  \\
\\
|\xi| \gg |\xi_{st}| : \quad |\nabla \tilde{\phi}_{x}(\xi) | \gtrsim |\xi|^{3} \cdot
}
Hence, by repeated integration by part using the formula $ e^{ i \tilde{\phi}_x} = \frac{\nabla e^{i \tilde{\phi}_x} \cdot \nabla \tilde{\phi}_x}{|\nabla \tilde{\phi}_x|^{2}}$, we easily get

\EQQARR{
N \in \mathbb{N}: \quad |(\p^{\alpha} \tilde{I})^{|\xi_{st}|:far}| \lesssim_{N} \frac{1}{|x|^{N}} \cdot
}
\\
Hence it suffices to prove

\EQQARRLAB{
\left| \p_{\rho}^{\beta}  \int_{\mathbb{R}^{n}} \eta \left( |\xi| - |\xi_{st}| \right) e^{i \tilde{\phi}_{x}(\xi)} \; d \xi \right|
\lesssim \frac{1}{|x|^{\frac{n + \beta}{3}}} \cdot
\label{Eqn:AuxEst}
}
Passing to the polar coordinates, an easy computation shows that (\ref{Eqn:AuxEst}) follows from the following estimates

\EQQARR{
|\tilde{I}_{\beta}^{\cls; \pi:k} (x)| \lesssim \frac{1}{|x|^{\frac{n + \beta}{3}}}.
}
Here $k \in \{cl,med,far \}$,

\EQQARR{
\tilde{I}_{\beta}^{\cls; \pi:k} (x) :=  |\sigma_{n-2}|  e^{i |x| |\xi_{st}|}
\int_{\mathbb{R}} \int_{0}^{\pi}   s^{n-1} \eta(s- |\xi_{st}|)
 \left( |\xi_{st}| + s \cos{\theta} \right)^{\beta}
e^{i \bar{\phi}_x}  ( \sin \theta )^{n-2} \psi^{k}(\theta) \, d \theta \, ds,
}
with $\theta$ denoting the geometric angle between $e_n$ and $\xi$, $\sigma_{n-2}$ denoting
the surface of the $n-2$ dimensional sphere,

\EQQARR{
\bar{\phi}_x := s^{4} + s |x| \cos \theta,
}
$\psi^{cl}$ (resp. $\psi^{far}$) denoting a smooth function
that is supported in a small neighborhood of $\pi$ (resp. $0$) and equals to $1$ near $\pi$ (resp. $0$), and
$\psi^{med} := 1 - \psi^{cl} - \psi^{far}$. \\
\\
We first estimate $\tilde{I}_{\beta}^{\cls; \; \medth} $ and $\tilde{I}_{\beta}^{\cls; \; \farth}$.\\
Integrating by parts w.r.t  $\theta$ (resp. w.r.t $s$) the phase $e^{i \bar{\phi}_{x}}$, we can estimate
$\tilde{I}_{\beta}^{\cls; \; \medth}$ (resp. $\tilde{I}_{\beta}^{\cls; \; \farth}$). We get

\EQQARRLAB{
N \in \mathbb{N}, \quad k \in \{cl, med\}: \quad \left| \tilde{I}_{\beta}^{\cls; \; \pi:k}  \right|  \lesssim_{N} \frac{1}{|x|^{N}} \cdot
\label{Eqn:DefIbet}
}
We then estimate $\tilde{I}_{\beta}^{\cls; \clth} $ by either integrating by parts the phase
$e^{i \bar{\phi}_{x} }$ w.r.t $s$ or $\theta$ using the formula.
Following the same strategy as in the previous section it is worth considering at first
sight two integrals:

\begin{itemize}

\item the integral appearing from the integration by parts w.r.t $s$ that contains the term
$ \p_{s} \left( \frac{1}{_{\p_{s} \bar{\phi}_{x}}} \right) $

\item the integral appearing from the integration by parts w.r.t  $\theta$ that contains the term
$\p_{\theta} \left( \frac{1}{_{\p_{\theta}\bar{\phi}_{x}}} \right)$

\end{itemize}
On the support of the integrand of $\tilde{I}_{\beta}^{\cls;\clth}$  we have

\EQQARR{
\left| \p_{s} \left( \frac{1}{_{\p_{s} \bar{\phi}_x}} \right) \right|  \lesssim
\left|  \p_{\theta} \left( \frac{1}{_{\p_{\theta} \bar{\phi}_x}}   \right) \right|
& \Leftrightarrow |\p_{s} \bar{\phi}_x| \gtrsim |\p_{\theta} \bar{\phi}_x| \left( \frac{|\xi_{st}|}{|x|} \right)^{\frac{1}{2}} \\
& \\
& \Leftrightarrow \left| 4 s^{3} - |x| + \frac{|x| (\pi - \theta)^{2}}{2} + o ( |x| (\pi - \theta)^{2} )  \right|
\gtrsim |\xi_{st}|^{\frac{3}{2}} |x|^{\frac{1}{2}} | \pi - \theta |  \\
& \\
& \Leftrightarrow (s,\theta) \in \Gamma_{s},
}
with

\EQQARR{
\Gamma_{s} := \left\{ (s,\theta): \quad \theta \in [0,\pi] \quad \text{and} \quad  \pi - \theta \lesssim \frac{|s - |\xi_{st}|| |\xi_{st}|^{2} }{|x|}   \right\} \cdot }
In order to emphasize this region we introduce  $\Omega$, 
an homogeneous function of degree zero w.r.t  $(|\xi_{st}|,\pi)$ \footnote{i.e $\Omega \left( \lambda (s - |\xi_{st}|, \theta - \pi) \right)
= \Omega  \left( s- |\xi_{st}|, \theta - \pi \right) $ for all $\lambda \neq 0$} and smooth away from $(|\xi_{st}|,\pi)$ such that
$\Omega =1$ on $\Gamma_{s}$ and $\Omega=0$ on $\Gamma_{\theta}$ with

\EQQARR{
\Gamma_{\theta}:= \left\{ (s,\theta): \quad \theta \in [0,\pi] \quad \text{and} \quad  \pi - \theta \gg \frac{\left|s - |\xi_{st}| \right| \; |\xi_{st}|^{2} }{|x|}   \right\} \cdot
}
The following holds (with $k \in \mathbb{N}$):

\EQQARRLAB{
| s- |\xi_{st}| | \approx \frac{|x| (\pi - \theta)}{ |\xi_{st}|^{2}}: \; |\p_{s}^{k} \Omega| \lesssim \frac{1}{|s - |\xi_{st}||^{k}}, \; \text{and} \;
  |\p_{\theta}^{k} \Omega| \lesssim \frac{1}{|\pi - \theta|^{k}}; \\
\\
| s- |\xi_{st}| | \gg \frac{|x| (\pi - \theta)}{ |\xi_{st}|^{2}} \; \text{or} \;
| s- |\xi_{st}| | \ll \frac{|x| (\pi - \theta)}{ |\xi_{st}|^{2}} \; \text{then} \;  \p_{s}^{k} \Omega = \p_{\theta}^{k} \Omega =0 \cdot
\label{Eqn:EstDerOmeg2}
}
The same estimates hold for $\comp{\Omega}$.\\
\\
Hence, in view of (\ref{Eqn:DefIbet}) we are reduced to estimate  $ \tilde{I}_{s}^{\cls; \; \clth} $
and $\tilde{I}_{\theta}^{\cls; \; \clth} $ defined by the following

\EQQARR{
\tilde{I}_{s}^{\cls; \; \clth}    :=
\int_{\mathbb{R}} \int_{0}^{\pi} e^{i \bar{\phi}_{x}} \Omega \;  s^{n-1} \eta(s- |\xi_{st}|)
 \left( |\xi_{st}| + s \cos{\theta} \right)^{\beta}
 ( \sin \theta )^{n-2} \psi^{cl}(\theta) \, d \theta \, ds, \quad \text{and}
}

\EQQARR{
\tilde{I}_{\theta}^{\cls; \; \clth}   :=
\int_{\mathbb{R}} \int_{0}^{\pi} e^{i \bar{\phi}_{x}}  \comp{\Omega} \;   s^{n-1}  \eta(s- |\xi_{st}|)
 \left( |\xi_{st}| + s \cos{\theta} \right)^{\beta}
 ( \sin \theta )^{n-2} \psi^{cl}(\theta) \, d \theta \, ds \cdot
}
\\
We first estimate $\tilde{I}_{s}^{\cls; \clth}$. \\
We have
$\tilde{I}_{s}^{\cls; \; \clth}(x) =  \tilde{I}_{s,cl}^{\cls; \; \clth}  +
\tilde{I}_{s,far}^{\cls; \; \clth} $ with

\EQQARR{
\tilde{I}_{s,cl}^{\cls; \; \clth} := \int_{\mathbb{R}} \int_{0}^{\pi} e^{i \bar{\phi}_{x}} \eta \left( \frac{s - |\xi_{st}|}{\epsilon} \right)
\Omega \;   s^{n-1}
\eta(s- |\xi_{st}|)
 \left( |\xi_{st}| + s \cos{\theta} \right)^{\beta} ( \sin \theta )^{n-2} \psi^{cl}(\theta) \, d \theta \, ds, \; \text{and} \\
\\
\tilde{I}_{s,far}^{\cls; \; \clth}
:= \int_{\mathbb{R}} \int_{0}^{\pi} e^{i \bar{\phi}_{x}}
 \comp{\eta} \left( \frac{s - |\xi_{st}|}{\epsilon} \right)
\Omega \;   s^{n-1} \eta(s- |\xi_{st}|)
 \left( |\xi_{st}| + s \cos{\theta} \right)^{\beta}
( \sin \theta )^{n-2} \psi^{cl}(\theta) \, d \theta \, ds \cdot
}
Observe that on the support of $\Omega$

\EQQARR{
(s,\theta) \in \supp{(\Omega)}: \; | |\xi_{st}| + s \cos{\theta} | \approx  \left| s - |\xi_{st}| \right|, \\
\\
(s,\theta) \in \supp{(\Omega)}: \; |\p_{s} \bar{\phi}_{x}| \approx |\xi_{st}|^{2} \left|s - |\xi_{st}|\right| \cdot \\
}
These observations are implicitely used in the sequel. We have

\EQQARR{
\left| \tilde{I}_{s,cl}^{\cls; \; \clth} \right| & \lesssim |\xi_{st}|^{n-1}
\int_{\left| s - |\xi_{st}| \right| \lesssim \epsilon}  \left| s - |\xi_{st}| \right|^{\beta}
\int_{|\pi - \theta| \lesssim \frac{|s - |\xi_{st}|| |\xi_{st}|^{2}}{|x|}} |\pi - \theta|^{n-2} \; d \theta \; ds  \\
\\
& \lesssim |\xi_{st}|^{n -1} \frac{ \epsilon^{n+ \beta}}{|x|^{\frac{n-1}{3}}} \cdot
}
We estimate $\tilde{I}_{s,far}^{\cls; \; \clth}$ by integration by parts of the phase
$e^{i \bar{\phi}_x}$ w.r.t $s$. If we integrate
by parts $\bar{p}$ times with $\bar{p} \gg 1$, we have to estimate many integrals. It is worth choosing $\epsilon$
by considering only the integral $K$ that contains only the term
$ \left( \p_{s} \left( \frac{1}{ _{\p_{s}\bar{\phi}_{x}}} \right) \right)^{\bar{p}}$, i.e

\EQQARR{
K := \int_{\mathbb{R}} \int_{0}^{\pi} e^{i \bar{\phi}_{x}} \left( \p_{s} \left( \frac{1}{_{\p_{s} \bar{\phi}_{x}}} \right) \right)^{\bar{p}}
\comp{\eta} \left( \frac{s - |\xi_{st}|}{\epsilon} \right)
\Omega \;   s^{n-1}
\eta(s- |\xi_{st}|)
 \left( |\xi_{st}| + s \cos{\theta} \right)^{\beta} ( \sin \theta )^{n-2} \psi^{cl}(\theta) \, d \theta \, ds
}
We have

\EQQARRLAB{
\label{Eqn:RHS}
|K|  & \lesssim |\xi_{st}|^{n-1} \int_{|s - |\xi_{st}|| \gtrsim \epsilon} \frac{| |\xi_{st}| - s|^{\beta} }{ |\xi_{st}|^{2\bar{p}}|s - |\xi_{st}||^{2\bar{p}}}
\int_{ |\pi - \theta| \lesssim \frac{|s - |\xi_{st}|| |\xi_{st}|^{2}}{|x|}} |\pi - \theta|^{n-2} \; d \theta \; d s
}
Hence

\EQQARR{
|K| & \lesssim |\xi_{st}|^{n-1} \frac{\epsilon^{n+ \beta}}{\epsilon^{2\bar{p}} |\xi_{st}|^{2\bar{p}} |x|^{\frac{n-1}{3}}}
}
Hence, optimizing in $\epsilon$  the upper bound of  $ | \tilde{I}_{s,cl}^{\cls; \; \clth} |  + |K|$, we see that
$\epsilon \approx |\xi_{st}|^{-1}$. \\
\\
We now estimate the other integrals with this value of $\epsilon$.\\
Given $p \in \mathbb{N}$, let \footnote{Here $\mathcal{V}(\pi)$ denotes a small neighborhood of $\pi$}

\EQQARR{
\mathcal{Q}_{p} := \left\{ f \in C^{\infty} \left( \{  0  < |s- |\xi_{st}|| \lesssim 1  \} \times  \mathcal{V}(\pi)  \right): \;
|\p^{\alpha}_{s} f (s,\theta)| \lesssim
\frac{1}{|\xi_{st}|^{2p} |s- |\xi_{st}||^{ 2p+ \alpha}}, \; \alpha \in \mathbb{N} \right\} \cdot
}
and \footnote{a function $f$ behaves like $g$ if it (and its derivatives) satisfy the same estimates or better estimates as
$g$ (and its derivatives)}

\EQQARR{
K_{p} := \left\{
\begin{array}{l}
\int_{\mathbb{R}} \int_{0}^{\pi} e^{i \bar{\phi}_{x}}
 f(s,\theta) \; \bar{\eta} \left( \frac{s -|\xi_{st}|}{\epsilon} \right) \; \tilde{\Omega} \; \tilde{\eta} (s - |\xi_{st}|)
s^{n-1} \;  (|\xi_{st}| + s \cos{\theta})^{\beta} ( \sin \theta )^{n-2} \psi^{cl}(\theta) \; d\theta \; ds, \\
f \in \mathcal{Q}_{p} \; \; and \; \;  \bar{\eta} \; ( resp. \; \tilde{\Omega} \; and \; \tilde{\eta}) \;
behaving \; like \; \comp{\eta} \; ( resp. \; \Omega \; and \; \eta)
\end{array}
\right\} \cdot
}
Integrating by parts the phase $e^{i \bar{\phi}_{x}}$  w.r.t $s$, we see, in view of (\ref{Eqn:EstDerOmeg2}), that

\EQQARR{
K_{p} \subset K_{p+1} \cdot
}
Since $\tilde{I}_{s,far}^{\cls; \; \clth} \in K_{0}$, we get

\EQQARR{
|\tilde{I}_{s,far}^{\cls; \; \clth}| \lesssim RHS \; \; \text{of} \; \; (\ref{Eqn:RHS}) \lesssim  |\xi_{st}|^{n-1}
\frac{\epsilon^{n+ \beta}}{\epsilon^{2 \bar{p}} |\xi_{st}|^{2 \bar{p}} |x|^{\frac{n-1}{3}}} \cdot
}
Hence

\EQQARR{
| \tilde{I}_{s}^{\cls; \; \clth} | \lesssim \frac{1}{|x|^{\frac{n+ \beta}{3} }} \cdot
}
We then estimate $\tilde{I}_{\theta}^{\cls; \; \clth}$. We have
$\tilde{I}_{\theta}^{\cls; \; \clth} =  \tilde{I}_{\theta,cl}^{\cls; \; \clth}  +
\tilde{I}_{\theta,far}^{\cls; \; \clth} $ with

\EQQARR{
\tilde{I}_{\theta,cl}^{\cls; \; \clth} := \int_{\mathbb{R}} \int_{0}^{\pi} e^{i \bar{\phi}_{x}} \eta \left( \frac{\theta - \pi}{\epsilon} \right) \comp{\Omega} \;   s^{n-1}
\eta(s- |\xi_{st}|)
 \left( |\xi_{st}| + s \cos{\theta} \right)^{\beta} ( \sin \theta )^{n-2} \psi^{cl}(\theta) \, d \theta \, ds, \;  \text{and} \\
\\
\tilde{I}_{\theta,far}^{\cls; \; \clth}
:= \int_{\mathbb{R}} \int_{0}^{\pi}  e^{i \bar{\phi}_{x}}
 \comp{\eta} \left( \frac{\theta - \pi}{\epsilon} \right)
\comp{\Omega} \;   s^{n-1} \eta(s- |\xi_{st}|)
 \left( |\xi_{st}| + s \cos{\theta} \right)^{\beta}
( \sin \theta )^{n-2} \psi^{cl}(\theta) \, d \theta \, ds \cdot
}
We have

\EQQARR{
| \tilde{I}_{\theta,cl}^{\cls; \; \clth} | & \lesssim |\xi_{st}|^{n-1}
\int_{|\pi - \theta| \lesssim \epsilon}
\int_{|s - |\xi_{st}|| \lesssim \frac{|x| |\pi - \theta|}{|\xi_{st}|^{2}}}
\max
\left(
\left| s - |\xi_{st}| \right|^{\beta}, \; |\xi_{st}|^{\beta} |\pi - \theta|^{2\beta}
\right)
 \; ds \;
|\pi - \theta|^{n-2}
\;  d \theta \\
& \\
& \lesssim \max \left( \epsilon^{\beta +n}, \epsilon^{2 \beta + n} \right) |\xi_{st}|^{\beta + n} \cdot
}
We estimate  $\tilde{I}_{\theta,far}^{\cls; \; \clth}$ by integration by parts, using the formula
$\p_{\theta} e^{i \bar{\phi}_{x}} = i \p_{\theta} \bar{\phi}_{x} e^{i \bar{\phi}_{x}}$. Again, if we integrate by parts
$\bar{p}$ times with $\bar{p} \gg 1$, we have to estimate many integrals. It is worth choosing $\epsilon$ by considering only
the integral $K$ that contains only the term $  \left( \p_{\theta} \left( \frac{1}{_{\p_{\theta} \bar{\phi}_{x}}} \right) \right)^{\bar{p}}$, i.e

\EQQARR{
K := \int_{\mathbb{R}} \int_{0}^{\pi}  e^{i \bar{\phi}_{x}} \left( \p_{\theta} \left( \frac{1}{_{ \p_{\theta} \bar{\phi}_{x}}} \right) \right)^{\bar{p}}
\comp{\eta} \left( \frac{\theta - \pi}{\epsilon} \right)
\Omega \;   s^{n-1}
\eta(s- |\xi_{st}|)
 \left( |\xi_{st}| + s \cos{\theta} \right)^{\beta} ( \sin \theta )^{n-2} \psi^{cl}(\theta) \, d \theta \, ds \cdot
}
We have

\EQQARR{
|K| & \lesssim |\xi_{st}|^{n-1} \int_{|\pi - \theta| \gtrsim \epsilon}  \frac{ |\pi - \theta|^{n-2}} {(|\xi_{st}| |x|)^{\bar{p}} (\pi - \theta)^{2 \bar{p} }}
\int_{|s - |\xi_{st}|| \lesssim \frac{|x| |\pi - \theta|}{|\xi_{st}|^{2}}}
\max \left( \left| s - |\xi_{st}| \right|^{\beta},  |\xi_{st}|^{\beta} |\pi - \theta|^{2 \beta} \right) \; ds \;  d \theta \\
& \\
& \lesssim  \frac{|\xi_{st}|^{\beta + n} \max{( \epsilon^{ \beta + n}, \epsilon^{ 2 \beta + n})}}{\epsilon^{2 \bar{p}} |\xi_{st}|^{4 \bar{p}} } \cdot
}
Hence, optimizing the upper bound of $|K| +  |\tilde{I}_{\theta,cl}^{\cls; \; \clth}| $, we find $\epsilon \approx |\xi_{st}|^{-2}$. \\
\\
We now estimate the other integrals with this value of $\epsilon$. \\
Observe that if $q(s,\theta):=  ( |\xi_{st}| + s \cos{\theta})^{\beta} $, then, on the support of the
integrand of $ \tilde{I}_{\theta,far}^{\cls; \; \clth} $, we have

\begin{equation}
\begin{array}{ll}
\alpha \leq \beta: \; | \p^{\alpha}_{\theta} q(s,\theta)| &
\lesssim \sum \limits_{i=1}^{\alpha} \left| |\xi_{st}| + s \cos{\theta} \right|^{\beta -i} |\xi_{st}|^{i} \\                                                      & \lesssim \sum \limits_{i=1}^{\alpha}  |\xi_{st}|^{i} \left|  |\xi_{st}| - s \right|^{\beta -i} +
|\xi_{st}|^{\beta} |\pi - \theta|^{2(\beta-i)}, \; \text{and} \\
& \\
\alpha > \beta: \; \p^{\alpha}_{\theta} q(s,\theta) & = 0 \cdot
\end{array}
\label{Eqn:EstDerq}
\end{equation}
Let $\vec{r} := (r_{l})_{l \in [1..3]} \in \mathbb{N}^{3}$, $\overrightarrow{r_{l,+}}:= (r_1,r_{l+1},r_3)$ for
$l \in \{ 1,2,3 \}$, and $\vec{0}:=(0)_{l \in [1..3]}$. Given $(p,q) \in \mathbb{N}^{2}$ let

\EQQARR{
\mathcal{Q}_{p,q} := \left\{ f \in C^{\infty} \left( 0 <  |\pi - \theta| \ll 1 \right): \; |\p^{\alpha}_{\theta} f (s,\theta)|
\lesssim \frac{1}{(|\xi_{st}| |x|)^{p} |\pi - \theta|^{p+q + \alpha}}  \right\} \cdot
}
Given $ f \in \mathcal{Q}_{p,r_1}$,  we define
$K_{\vec{r}}(f) :=  \int_{\mathbb{R}} \int_{0}^{\pi} e^{i \bar{\phi}_{x}} X_{\vec{r}}(f) \; d s \; d \theta$, with

\EQQARR{
X_{\vec{r}} (f) := f(s,\theta) \p^{r_2}_{\theta} \left( \comp{\eta} \left( \frac{\theta - \pi}{\epsilon} \right) \comp{\Omega}
(\sin \theta)^{n-2} \psi^{cl}(\theta) \right) \p^{r_3}_{\theta}  q(s,\theta)  \; s^{n-1}  \eta \left( s- |\xi_{st}| \right) \cdot }
Integrating by parts w.r.t $\theta$, we see that there exists $g \in \mathcal{Q}_{p+1, r_{1}+1}$ and
$ h  \in \mathcal{Q}_{p+1, r_{1}}$ such that

\EQQARR{
K_{\vec{r}}(f) & = \int_{\mathbb{R}^{2}} \frac{\p_{\theta} e^{i \bar{\phi}_{x}}}{  _{i \p_{\theta} \bar{\phi}}} X_{\vec{j}}(f) \; d \theta \; ds  \\
& = - \int_{\mathbb{R}} \int_{0}^{\pi} e^{i \bar{\phi}_{x}}  \p_{\theta} \left( \frac{1}{ _{ i \p_{\theta} \bar{\phi}_{x}}} X_{\vec{r}}(f) \right) \;
d \theta \; ds \\
& = i \left( K_{\overrightarrow{r_{1,+}}} (g) + \sum \limits_{l=2}^{3} K_{\overrightarrow{r_{l,+}}} (h) \right) \cdot
}
Hence, since $ K_{\vec{0}}(1) = \tilde{I}_{\theta,far}^{\cls; \; \clth}$, we see by iteration over $p$ that we are reduced to estimate
$K_{\vec{r}}(f)$ for $\vec{r}$ such that $  \sum \limits_{l=1}^{3} r_{l} = \bar{p}$ and $f \in \mathcal{Q}_{\bar{p},r_1}$. \\
Hence, in view of (\ref{Eqn:EstDerOmeg2}) and (\ref{Eqn:EstDerq})

\EQQARR{
| \tilde{I}_{\theta,far}^{\cls; \; \clth} | & \lesssim \frac{|\xi_{st}|^{n-1}}{(|\xi_{st}| |x|)^{\bar{p}}}  \sum \limits_{k=1}^{r_3}
  \int_{|\pi -\theta| \gtrsim \epsilon }  \frac{|\pi -\theta|^{n-2}}{|\pi- \theta|^{\bar{p} + r_1 + r_2}}
 \int_{|s- |\xi_{st}| | \lesssim \frac{|x|}{|\xi_{st}|^{2}} |\pi - \theta|} Z_{k,\beta}  \; ds   \; d \theta, \\
}
with

\EQQARR{
Z_{k,\beta} & :=
\max \left( | |\xi_{st}| - s|^{\beta - k} |\xi_{st}|^{k}, |\xi_{st}|^{\beta} |\pi - \theta|^{2(\beta-k)} \right) \cdot
}
Hence

\EQQARR{
| \tilde{I}_{\theta,far}^{\cls; \; \clth} | \lesssim  \frac{|\xi_{st}|^{\beta +n} \max(
\epsilon^{\beta + n}, \epsilon^{2 \beta +n})}{|\xi_{st}|^{4 \bar{p}} \epsilon^{2 \bar{p}}} \lesssim \frac{1}{|x|^{\frac{n+ \beta}{3} }} \cdot
}

\label{Sec:OscInt}

\end{document}